\newcommand{\mb}{\mathbf}
\newcommand{\mc}{\mathcal}
\renewcommand{\Re}{\mathrm{Re}\,}
\newcommand{\rg}{\mathrm{rg}\,}
\newtheorem{lemma}{Lemma}[section]
\newtheorem{proposition}[lemma]{Proposition}
\newtheorem{theorem}[lemma]{Theorem}
\newtheorem{corollary}[lemma]{Corollary}
\theoremstyle{remark}
\theoremstyle{definition}
\newtheorem{definition}[lemma]{Definition}
\newcommand{\R}{\mathbb{R}}
\title{Stable self--similar blowup in energy supercritical Yang--Mills theory}
\author{Roland Donninger}
\address{\'Ecole Polytechnique F\'ed\'erale de Lausanne, 
MA B1 487, Station 8, CH-1015 Lausanne, Switzerland }
\email{roland.donninger@epfl.ch}
\begin{document}

\maketitle

\begin{abstract}
We consider the Cauchy problem for an energy supercritical nonlinear wave equation
that arises in $(1+5)$--dimensional Yang--Mills theory.
A certain self--similar solution $W_0$ of this model
is conjectured to act as an 
attractor for generic large data evolutions.
Assuming mode stability of $W_0$, we prove a weak version of this conjecture, namely
that the self--similar solution $W_0$ is (nonlinearly) stable.
Phrased differently, we prove that mode stability of $W_0$ implies its nonlinear stability.
The fact that this statement is not vacuous follows from careful numerical work by Bizo\'n and
Chmaj that
verifies the mode stability of $W_0$ beyond reasonable doubt.
\end{abstract}

\section{Introduction}

This paper is concerned with the study of the Cauchy problem for the semilinear wave equation
\begin{equation}
\label{eq:main}
\psi_{tt}(t,r)-\psi_{rr}(t,r)-\frac{d-3}{r}\psi_r(t,r)+\frac{d-2}{r^2}F(\psi(t,r))=0 
\end{equation}
for $d=5$ where $F(\psi)=\psi(\psi+1)(\psi+2)$ and $r=|x|$, $x\in \R^d$.
Eq.~\eqref{eq:main} arises in $SO(d)$--equivariant Yang--Mills theory, see \cite{D82}, \cite{B02}
for a derivation.
Historically, the introduction of nonabelian gauge theory by Yang and Mills was fundamental for 
the development of the standard model of particle physics, see \cite{T05}.
Apart from that, the Yang--Mills model as a classical field theory attracted a lot of interest too, 
cf.~\cite{A79}.
Furthermore, Yang--Mills equations 
have been proposed
as toy models for Einstein's equations of general relativity, see e.g.~\cite{EM82a}, \cite{GM07}.
Especially in this context the development of singularities in finite time is of interest.

Eq.~\eqref{eq:main} is invariant under scaling and it admits a conserved energy
\[ E(\psi(t,\cdot),\psi_t(t,\cdot))=\int_0^\infty \left [ \psi_t(t,r)^2+\psi_r(t,r)^2+\frac{d-2}{2r^2}\psi(t,r)^2(\psi(t,r)+2)^2 \right ]r^{d-3}dr \]
which scales like 
\[ E(\psi^\lambda(\lambda t,\cdot),\psi_t^\lambda(\lambda t,\cdot))=\lambda^{d-4}E(\psi(t,\cdot),\psi_t(t,\cdot)) \] 
where $\psi^\lambda(t,r):=\psi(t/\lambda, r/\lambda)$, $\lambda>0$.
This shows that Eq.~\eqref{eq:main} is energy subcritical in the physical dimension $d=3$ whereas
it is critical for $d=4$ and supercritical for $d\geq 5$.
According to the usual blowup heuristics, where energy conservation prevents the solution
from shrinking in the energy subcritical case, one expects global existence for $d=3$.
Indeed, Eardley and Moncrief \cite{EM82a}, \cite{EM82b} considered this problem (without symmetry assumptions)
and proved global existence for data in suitable Sobolev spaces without restriction on size.
This classical result was strengthened by Klainerman and Machedon \cite{KM95} who 
lowered the required degree of regularity of the data and provided a different approach to the problem, see also
\cite{K97}, \cite{T03}.

Yang--Mills fields in dimension $d=4$ attracted a lot of interest in the recent past due 
to new developments in the study of energy critical wave equations.
Local well-posedness under minimal
regularity assumptions was considered by Klainerman, Tataru \cite{KT99} and global existence
for small data was proved by Sterbenz \cite{S07}.
In the critical dimension $d=4$, Eq.~\eqref{eq:main} admits a static, finite energy solution which
is known as the \emph{instanton}.
This indicates the existence of more complex dynamics than in the case $d=3$.
In particular, the energy of the instanton represents a threshold for global existence.
Indeed, C\^ote, Kenig and Merle \cite{CKM08} proved global existence and scattering 
(either to zero or to a rescaling of the instanton)
for data with energy
below (or equal to) the energy of the instanton.
On the other hand, it is known that the Yang--Mills system in the critical dimension $d=4$ can develop
singularities in finite time.
This has been conjectured by Bizo\'n \cite{B02} and demonstrated numerically in \cite{B02}, \cite{LS02}. 
Furthermore, the blowup rate was derived by Bizo\'n, Ovchinnikov and Sigal \cite{BOS04}.
The existence of blowup solutions was proved rigorously by Krieger, Schlag, Tataru 
\cite{KST09} as well as Rapha\"el and Rodnianski \cite{RR09} who also obtained the stable blowup rate.

In the supercritical dimension $d=5$, which shall concern us here, much less is known. 
In general, the study of energy supercritical wave equations is still only at the beginning, see, however, 
e.g.~\cite{KM11a}, \cite{KM11}, \cite{KV11a}, \cite{KV11}, \cite{B11}, \cite{B10}, \cite{D11} for recent progress. 
It is clear that energy supercritical problems will have to play a prominent role in the future development of the field, 
not only because of their relevance in physics which can hardly be overstressed.
In particular, much work remains to be done in order to improve our understanding of \emph{large} 
solutions which at the moment is mostly confined to
the construction of self--similar solutions by solving a corresponding
elliptic ODE problem, a procedure which is insensitive to the criticality class of the problem, 
see e.g.~\cite{S88}, \cite{CST98}, \cite{B00}, \cite{BMW07}.
In the case of the Yang--Mills field in $d=5$ 
one has global existence \cite{S10} for small data, see also \cite{KS05} for $d\geq 6$, 
whereas for large data finite time blowup is
possible \cite{CST98}.
The failure of global existence has been demonstrated by constructing self--similar solutions to Eq.~\eqref{eq:main}.
In fact, Bizo\'n \cite{B02} showed that there exists a countable family of self--similar solutions. 
Furthermore, he was even able to find an explicit expression for the ``ground state'' of this family which
we denote by $\psi^T$ and it reads 
\[ \psi^T(t,r)=W_0(\tfrac{r}{T-t})-1 \]
where $T>0$ is a constant (the blowup time) and 
\[ W_0(\rho)=\frac{1-\rho^2}{1+\frac35 \rho^2}. \]
The relevance of such an explicit solution for understanding the dynamics of the equation
depends on its stability.
In other words, the important question is: does the blowup described by $\psi^T$ occur for a ``large'' set of initial data?
Numerical simulations \cite{BC05} indicate that this is indeed the case.
Moreover, it appears that the blowup via $\psi^T$ is ``generic'', i.e., sufficiently large,
``randomly'' chosen initial data lead to an evolution which asymptotically (as $t \to T-$) converges
to $\psi^T$.
Consequently, in the present paper we study the stability of $\psi^T$ and obtain the following result,
see Theorem \ref{thm:main} below for the precise statement.

\begin{theorem}[main result, qualitative version]
\label{thm:mainqual}
Suppose $\psi^T$ is mode stable. Then there exists an open set (in a topology strictly stronger
than the energy) of initial data for Eq.~\eqref{eq:main} such that the corresponding time evolution approaches $\psi^T$
and blows up.
\end{theorem}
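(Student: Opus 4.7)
\medskip
\noindent\textbf{Proof plan.}

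The plan is to work in similarity variables and set up the problem as a semigroup perturbation around the static solution $W_0$. Specifically, I would introduce $\tau=-\log(T-t)$ and $\rho=r/(T-t)$ so that $\psi^T$ becomes the stationary profile $W_0(\rho)-1$ in the $(\tau,\rho)$ coordinates, and the backward light cone of the putative blowup point $(T,0)$ is mapped to the cylinder $\tau\in\R$, $\rho\in[0,1]$. Writing $\psi(t,r)=W_0(\rho)-1+\phi(\tau,\rho)/(T-t)^0$ (the precise power being dictated by the similarity scaling) converts \eqref{eq:main} restricted to the backward cone into an evolution equation $\partial_\tau\Phi=\tilde L\Phi+N(\Phi)$ on a Hilbert space $\mathcal H$ of functions on $\rho\in[0,1]$, with $\tilde L$ the linearization at $W_0$ and $N$ a quadratic/cubic nonlinearity. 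The space $\mathcal H$ will be a higher-order Sobolev space whose regularity strictly exceeds the scaling-critical one (so in particular strictly stronger than the energy), chosen so that it embeds into a Banach algebra and the nonlinearity is locally Lipschitz; this is the source of the "topology strictly stronger than the energy" clause in the statement.

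Next I would analyze the linearized operator $\tilde L$ on $\mathcal H$. First, by the classical Lumer--Phillips route (after adding a suitable multiple of the identity to handle the lower-order terms and to get dissipativity), establish that $\tilde L$ is the generator of a strongly continuous one-parameter semigroup $S(\tau)$ on $\mathcal H$, with an a priori exponential growth bound. Then I would locate the spectrum. The time-translation invariance of \eqref{eq:main} guarantees an explicit unstable eigenfunction of $\tilde L$ with eigenvalue $1$ coming from the one-parameter family $\{\psi^T\}$, and this symmetry mode is unavoidable. Mode stability of $W_0$ is exactly the assumption that, apart from this gauge mode, $\sigma(\tilde L)$ contains no further eigenvalues in the closed right half-plane. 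To upgrade mode stability to a bona fide semigroup decay estimate one needs a spectral mapping; I would write $\tilde L=L_0+L'$ with $L_0$ an "unperturbed" part (the free wave part in similarity variables) whose semigroup is understood and satisfies a growth bound strictly less than $1$, and $L'$ a relatively compact perturbation. Compactness of $L'$ together with mode stability then yields, via a Gearhart--Pr\"uss/Weyl-type argument, an exponential dichotomy: $\mathcal H=\mathrm{span}(g)\oplus\mathcal H_s$ with a rank-one spectral projection $P$ onto the instability direction $g$ and uniform decay $\|S(\tau)(1-P)f\|\le Ce^{-\omega\tau}\|f\|$ for some $\omega>0$.

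The final step is the nonlinear argument, which uses the unstable direction productively via modulation of the blowup time. For initial data close (in $\mathcal H$) to the profile of $\psi^T$, I would solve the Duhamel equation
\begin{equation*}
\Phi(\tau)=S(\tau)\bigl(u_0+C(u_0,T)g\bigr)+\int_0^\tau S(\tau-\sigma)N(\Phi(\sigma))\,d\sigma-C(u_0,T)e^{\tau}g
\end{equation*}
by a standard Banach contraction in the space $X=\{\Phi\in C([0,\infty),\mathcal H):\sup_{\tau\ge 0}e^{\omega\tau}\|\Phi(\tau)\|_{\mathcal H}<\infty\}$; the correction $C(u_0,T)$ is then uniquely chosen so that the unstable component stays bounded, and a fixed-point/implicit-function argument in $T$ adjusts the blowup time so that $C(u_0,T(u_0))=0$, yielding a genuine solution of the nonlinear equation with $\|\Phi(\tau)\|_{\mathcal H}\lesssim e^{-\omega\tau}$. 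Pulling back through the similarity change of variables, exponential decay of $\Phi$ in $\tau$ translates to convergence of $\psi$ to $\psi^{T(u_0)}$ in the backward cone and to blowup at time $T(u_0)$. Because $T$ is a free parameter whose modulation absorbs the one-dimensional instability, the resulting set of admissible initial data is genuinely \emph{open} in $\mathcal H$, not merely codimension one, which is exactly the conclusion of the theorem.

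The main obstacle is the spectral/semigroup step: the semigroup $S(\tau)$ need not satisfy a spectral mapping theorem automatically, so extracting exponential decay on the stable subspace from the purely qualitative mode stability hypothesis requires a careful compact-perturbation decomposition of $\tilde L$ together with quantitative resolvent bounds on vertical lines. Verifying that the chosen Hilbert space is compatible with all three requirements simultaneously (generation of $S(\tau)$, compactness of $L'$, Lipschitz nonlinearity) is where most of the technical work will live; everything downstream is then a more or less standard modulation and contraction mapping.
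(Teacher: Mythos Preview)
Your plan is correct and matches the paper's approach essentially step for step: similarity coordinates, Lumer--Phillips generation for the free part, compact perturbation $L'$, Gearhart--Pr\"uss (together with a resolvent bound on vertical lines) for decay on $\ker P$, and a Lyapunov--Perron/modulation argument in $T$ to eliminate the rank-one instability. One small slip: in your displayed Duhamel formula the two correction terms cancel since $S(\tau)g=e^\tau g$; the paper instead writes the modified integral equation as $S(\tau)(1-P)u-\int_0^\infty e^{\tau-\tau'}P N(\Psi(\tau'))\,d\tau'+\int_0^\tau S(\tau-\tau')N(\Psi(\tau'))\,d\tau'$, but your surrounding text makes clear you have the right mechanism in mind.
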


We remark that the technical assumption we have to make, the mode stability of $\psi^T$, is equivalent
to a certain spectral property of a (nonself--adjoint) second order ordinary differential operator, see
below.
Unfortunately, we are not able to prove this spectral property rigorously but it has been verified
numerically beyond reasonable doubt \cite{BC05}.
In the following we will comment on this issue in more detail.

\subsection{The mode stability problem}
A first step in the stability analysis of the self--similar solution $\psi^T$ is to look for 
unstable mode solutions. To this end we insert the ansatz $\psi=\psi^T+\varphi$ into the 
Yang--Mills equation \eqref{eq:main} with $d=5$ and linearize by dropping all nonlinear terms in $\varphi$.
This yields the evolution equation
\begin{equation}
\label{eq:linear}
\varphi_{tt}-\varphi_{tt}-\tfrac{2}{r^2}\varphi_r+\tfrac{3}{r^2}F'(W_0(\tfrac{r}{T-t})-1)\varphi=0 
\end{equation}
for the perturbation $\varphi$.
In order to obtain a time independent potential we introduce $\rho=\frac{r}{T-t}$ as a new variable
and restrict ourselves to the backward lightcone of the blowup point $(T,0)$ by assuming
$\rho \in [0,1]$.
A particularly convenient choice for a new time variable is 
$\tau=-\log(T-t)$.
The coordinates $(\tau,\rho)$ are sometimes referred to as ``similarity variables'' and they are
frequently used when dealing with self--similar solutions for nonlinear wave equations, see e.g.~\cite{D10}, \cite{D11}, 
\cite{DS12}, \cite{MZ03}, \cite{MZ05}, \cite{MZ07}.
Note that the blowup takes place as $\tau \to \infty$ and thus, we are effectively dealing with
an asymptotic stability problem.
By setting $\varphi(t,r)=\phi(-\log(T-t),\frac{r}{T-t})$, Eq.~\eqref{eq:linear} transforms
into
\begin{equation}
\label{eq:linearcss}
\phi_{\tau \tau}+\phi_\tau+2\rho \phi_{\tau \rho}-(1-\rho^2)\left [\phi_{\rho \rho}
+\tfrac{2}{\rho^2}\phi_\rho \right ]+\tfrac{3}{\rho^2}F'(W_0(\rho)-1)\phi=0.
\end{equation}
A solution $\phi_\lambda$ of Eq.~\eqref{eq:linearcss} of the form 
$\phi_\lambda(\tau,\rho)=e^{\lambda \tau}u_\lambda(\rho)$ for $\lambda \in \mathbb{C}$ and
a nonzero function $u_\lambda \in C^\infty[0,1]$ is called a \emph{mode solution}.
It will become clear below why we can restrict ourselves to smooth $u_\lambda$.
Furthermore, we say that $\lambda$ is an \emph{eigenvalue} (of $\psi^T$) if there exists a corresponding mode
solution $\phi_\lambda$.
For obvious reasons a mode solution $\phi_\lambda$ (or an eigenvalue $\lambda$) 
is called \emph{stable} if 
$\mathrm{Re}\lambda<0$ and \emph{unstable} otherwise.
At this point it is worth emphasizing that a priori the nonexistence of unstable mode solutions
is \emph{neither necessary nor sufficient} for the (nonlinear) stability of $\psi^T$.
However, the nonexistence of unstable mode solutions is obviously necessary for the \emph{linear} stability
of $\psi^T$ and, as we will prove in this paper, it is even \emph{sufficient} for the \emph{nonlinear} 
stability of $\psi^T$ (see Theorem \ref{thm:main} for the precise statement).
As a consequence, it is crucial to understand mode solutions and, by inserting the ansatz
$\phi_\lambda(\tau,\rho)=e^{\lambda \tau}u_\lambda(\rho)$ into Eq.~\eqref{eq:linearcss}, this problem reduces
to the ODE
\begin{equation}
\label{eq:modstab}
-(1-\rho^2)\left [u_\lambda''+\tfrac{2}{\rho}u_\lambda' \right ]+2\lambda \rho u_\lambda'
+\lambda(\lambda+1)u_\lambda+\tfrac{3}{\rho^2}F'(W_0(\rho)-1)u_\lambda=0.
\end{equation}
As a matter of fact, there exists an unstable mode solution for $\lambda=1$ given
by $u_1(\rho)=\rho W_0'(\rho)$.
However, it turns out that this is a symmetry mode, i.e., it stems from the time translation symmetry
of Eq.~\eqref{eq:main} and does not count as a ``real'' instability (this will become much clearer
in Section \ref{sec:globalex} below, see in particular Lemma \ref{lem:U}).
Consequently, we define

\begin{definition}
\label{def:modstab}
The solution $\psi^T$ is said to be \emph{mode stable} iff $u_1(\rho)=\rho W_0'(\rho)$ is the only solution 
of Eq.~\eqref{eq:modstab} in $C^\infty[0,1]$ with $\mathrm{Re}\lambda\geq 0$.
\end{definition}

Unfortunately, it appears to be extremely difficult to exclude unstable mode solutions
(one has to bear in mind that the problem is nonself-adjoint; so in principle there could
be unstable eigenvalues with nonzero imaginary parts).
If $\mathrm{Re}\lambda\geq 1$, the problem is fairly easy since one can resort to Sturm--Liouville 
oscillation theory \cite{B02} and it is well--known that there do not exist unstable eigenvalues
$\lambda$ with $\Re\lambda \geq 1$ apart from the aforementioned symmetry mode.
However, the domain $0 \leq \mathrm{Re}\lambda <1$ seems to be very challenging.
The fact that exactly the same problem occurs in the study of energy supercritical wave maps 
\cite{DSA11}, \cite{D11} underlines the importance of having a general approach to that kind of 
nonself--adjoint spectral problems.
For the moment, however, this question remains open.
On the other hand, there are very reliable numerical techniques to study boundary value problems
of the type \eqref{eq:modstab}.
As a consequence, the mode stability of $\psi^T$ has been established numerically beyond 
reasonable doubt \cite{BC05}.
In addition, we provide a new result (see Lemma \ref{lem:nospec} below) which excludes unstable
eigenvalues that are far away from the real axis.
This puts the available numerics on an even stronger footing.

\subsection{The main result}
With these technical preparations at hand we can formulate our main result.
To begin with, we define a norm
\begin{align*} \|(f,g)\|_{\mc E(R)}^2:=&\int_0^R \left |rf'''(r)+6f''(r)+\tfrac{3}{r}f'(r)-\tfrac{3}{r^2}f(r) \right |^2 dr \\
&+\int_0^R \left |rg''(r)+5g'(r)+\tfrac{3}{r}g(r)\right |^2 dr
\end{align*}
on $\tilde{\mc E}(R):=\{(f,g)\in C^3[0,R]\times C^2[0,R]: f(0)=f'(0)=g(0)=0\}$. 
It is easily seen that $\|\cdot\|_{\mc E(R)}$ is indeed a norm on this space and we denote
by $\mc E(R)$ the completion of $\tilde {\mc E}(R)$ with respect to $\|\cdot\|_{\mc E(R)}$.
Our main result is the following.

\begin{theorem}[stable self--similar blowup]
\label{thm:main}
Assume $\psi^T$ to be mode stable.
Let $\varepsilon>0$ and suppose we are given initial data $(f,g) \in \mc E(\frac32)$ such that
\[ \|(f,g)-(\psi^1(0,\cdot),\psi^1_t(0,\cdot)\|_{\mc E(\frac32)} \]
is sufficiently small.
Then there exists a unique solution $\psi$ of Eq.~\eqref{eq:main} with $d=5$ satisfying
\[ \psi(0,r)=f(r),\quad \psi_t(0,r)=g(r),\quad r \in [0,\tfrac32] \]
and a blowup time $T\in (\frac12,\frac32)$ such that
\begin{equation}
\label{eq:convergence}
(T-t)^\frac32 \|(\psi(t,\cdot),\psi_t(t,\cdot))-(\psi^T(t,\cdot),\psi^T_t(t,\cdot))\|_{\mc E(T-t)}
\leq C_\varepsilon (T-t)^{|\omega_0|-\varepsilon} 
\end{equation}
for all $t \in [0,T)$ with $\omega_0:=\max\{-\frac32,\mu_0\}$ where $\mu_0$ 
is the real part of the first stable eigenvalue of $\psi^T$
and $C_\varepsilon>0$ is a constant which depends on $\varepsilon$.
\end{theorem}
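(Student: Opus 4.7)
The plan is to recast the problem as an asymptotic stability question in similarity variables, reduce it to a nonlinear fixed point in a suitable Banach space of trajectories, and use the free blowup parameter $T$ to kill the unstable direction coming from the time translation symmetry.

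First I would transform to the coordinates $\tau=-\log(T-t)$, $\rho=r/(T-t)$ from the introduction, writing $\psi(t,r)=\phi(\tau,\rho)$. A solution close to $\psi^T$ corresponds to $\phi$ close to $W_0-1$, so writing $\phi=W_0-1+\tilde\phi$ gives an equation of the form $\partial_\tau\Psi=\mathbf{L}\Psi+\mathbf{N}(\Psi)$ where $\Psi=(\tilde\phi,\partial_\tau\tilde\phi)$ lives on the backward lightcone $\rho\in[0,1]$ and $\mathbf{L}$ is the linearization of \eqref{eq:linearcss} around $W_0$. The norm $\|\cdot\|_{\mc E(R)}$ is tailored so that, after the obvious rescaling, it becomes $\tau$-independent and higher order (stronger than the energy), which is precisely what one needs to deal with the energy supercriticality; I would verify that the equation is well-posed locally on $\mc E(1)$-type spaces and that $\mathbf{L}$ generates a strongly continuous one-parameter semigroup $\{\mathbf{S}(\tau)\}$ by checking the hypotheses of the Lumer--Phillips/Hille--Yosida theorem after an appropriate similarity transform.

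Next I would analyze the spectrum $\sigma(\mathbf{L})$. The mode stability hypothesis, together with the new Lemma \ref{lem:nospec} (which rules out eigenvalues far from the real axis), restricts the unstable part of $\sigma(\mathbf{L})$ to the single symmetry eigenvalue $\lambda=1$ coming from $u_1(\rho)=\rho W_0'(\rho)$; from Lemma \ref{lem:U} one sees this mode is indeed generated by differentiating the family $\psi^T$ in $T$. A Dunford functional calculus argument then produces a rank-one spectral projection $\mathbf{P}$ commuting with $\mathbf{S}(\tau)$ such that on $\mathrm{rg}(\mathbf{I}-\mathbf{P})$ the semigroup obeys
\begin{equation*}
\|\mathbf{S}(\tau)(\mathbf{I}-\mathbf{P})\Psi\|\leq C_\varepsilon e^{(\omega_0+\varepsilon)\tau}\|\Psi\|
\end{equation*}
for any $\varepsilon>0$, where $\omega_0=\max\{-\tfrac32,\mu_0\}$; the $-\tfrac32$ threshold comes from the growth bound one can force on the semigroup on $\mc E(R)$ by choosing the norm well, independent of any further spectrum to the left of $\mu_0$.

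With the linear decay in place, I would set up a standard contraction-mapping argument in the Banach space $\mathcal{X}_\varepsilon$ of continuous trajectories $\tau\mapsto\Psi(\tau)$ with $\|\Psi(\tau)\|\lesssim e^{(\omega_0+\varepsilon)\tau}$, rewriting the equation via Duhamel as
\begin{equation*}
\Psi(\tau)=\mathbf{S}(\tau)\mathbf{U}(f,g,T)+\int_0^\tau\mathbf{S}(\tau-s)\mathbf{N}(\Psi(s))\,ds,
\end{equation*}
where $\mathbf{U}(f,g,T)$ is the initial datum in similarity variables for blowup time $T$. The obstruction is that $\mathbf{P}$ applied to this identity grows like $e^\tau$ unless the initial unstable component vanishes. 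Following the standard modulation philosophy, I would absorb this by choosing the blowup time $T=T(f,g)\in(\tfrac12,\tfrac32)$ as a function of the data so as to cancel the unstable component; this is possible because $\partial_T\mathbf{U}(f,g,T)$ has nontrivial projection onto $\mathrm{rg}\,\mathbf{P}$ at $T=1$, and a Brouwer/implicit-function argument on the map $T\mapsto\mathbf{P}(\text{unstable part of Duhamel at }\tau=\infty)$ produces the required $T$ for each nearby $(f,g)$. Nonlinear estimates require showing that $\mathbf{N}$ is locally Lipschitz on $\mc E$-balls with $\mathbf{N}(0)=D\mathbf{N}(0)=0$; this is the main technical obstacle, since the potential $\tfrac{3}{\rho^2}F'(W_0(\rho)-1)$ is singular at $\rho=0$ and the equation is energy supercritical, but the higher-order norm $\|\cdot\|_{\mc E(R)}$ provides enough regularity (via Hardy-type and Sobolev embedding inequalities on the cone) for the cubic nonlinearity to be handled.

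Finally I would translate back: \eqref{eq:convergence} is exactly the statement $\|\Psi(\tau)\|_{\mc E(1)}\leq C_\varepsilon e^{(\omega_0+\varepsilon)\tau}$ after undoing the rescaling, since the factor $(T-t)^{3/2}$ precisely compensates the scaling weight of $\|\cdot\|_{\mc E(R)}$. The hard parts I expect are (i) establishing the resolvent/semigroup growth bounds that give the $-\tfrac32$ threshold in $\omega_0$ on the orthogonal complement of $\mathrm{rg}\,\mathbf{P}$, and (ii) the nonlinear estimates in $\mc E(R)$ near the tip $\rho=0$ and near the light cone $\rho=1$, where the coefficients of \eqref{eq:linearcss} degenerate.
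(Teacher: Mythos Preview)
Your proposal is correct and matches the paper's approach step for step: similarity coordinates, semigroup generation via Lumer--Phillips, compactness of the potential perturbation reducing the spectral question to the mode stability ODE, rank-one Riesz projection onto the symmetry mode, Gearhart--Pr\"uss--Greiner decay on the stable subspace, a Lyapunov--Perron fixed point, and an intermediate-value argument in $T$ to kill the unstable component. The one place where the paper is more explicit than your outline is the first-order reduction: rather than the naive $\Psi=(\tilde\phi,\partial_\tau\tilde\phi)$, it sets $\varphi_1=\tfrac{r^3}{(T-t)^2}\varphi_t$ and $\varphi_2=(T-t)\,\tfrac{1}{r}\partial_r\bigl[\tfrac{1}{r}\partial_r(r^3\varphi)\bigr]$, a choice engineered so that the $\mc E$-norm becomes a plain $L^2$ inner product on the new variables and both the sharp $-\tfrac32$ growth bound and the compactness of the potential term $\mb L'$ drop out of short integrations by parts.
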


Remarks:

\begin{itemize}
\item As usual, by a ``solution'' we mean a function which satisfies the equation in the sense of 
Duhamel.

\item By a simple scaling argument one immediately sees that 
\[ \|(\psi^T(t,\cdot),\psi^T_t(t,\cdot))\|_{\mc E(T-t)}\simeq (T-t)^{-\frac32} \]
for all $t \in [0,T)$.
This explains the normalization in \eqref{eq:convergence} and shows that $\psi$
converges to the self--similar solution $\psi^T$ in the backward lightcone of the blowup point.

\item The rate of convergence in \eqref{eq:convergence} is dictated by the first stable eigenvalue
which complies with heuristic expectations and numerics \cite{BC05}.
The $\varepsilon$--loss in our estimate is purely technical.
We also remark that the numerically obtained value for $\mu_0$ is approximately
$-0.59$ \cite{BC05}.

\item The $\mc E$--norm is very natural since it is derived from a conserved quantity of a suitable
``free'' equation which is associated to Eq.~\eqref{eq:main}, see below.
Furthermore, the boundary conditions assumed for the initial data $(f,g)$ are natural too, 
since any sufficiently regular solution $\psi$ of Eq.~\eqref{eq:main} must satisfy
$\psi(t,0)=\psi_t(t,0)=\psi_r(t,0)=0$ for all $t$ (provided $\psi$ belongs to the same topological
sector as $\psi^T$ which we obviously assume).

\item It is clear that the result cannot hold in the energy topology.
This is due to the fact that the corresponding local energy in the backward lightcone of the
blowup point \emph{decays} like $T-t$ as $t \to T-$ and thus, self--similar blowup is 
invisible in the energy topology.
This is, of course, nothing but a manifestation of energy supercriticality.
\end{itemize}

\subsection{An outline of the proof}
The proof consists of a perturbative construction around $\psi^T$ which proceeds in several steps.

\begin{enumerate}
\item First, we identify a suitable Hilbert space where the corresponding inner product is 
\begin{itemize}
\item derived from a conserved quantity of a suitable ``free'' equation where the latter is
(roughly speaking) obtained
from Eq.~\eqref{eq:main} by dropping the nonlinear term,
\item strong enough to detect self--similar blowup,
\item strong enough to control the nonlinearity.
\end{itemize}
\item Next, we introduce similarity coordinates, linearize Eq.~\eqref{eq:main} around $\psi^T$ and
construct a semigroup that governs the linearized evolution.
The application of semigroup theory to this problem is natural since 
\begin{itemize}
\item the involved differential operator is highly nonself--adjoint due to the introduction of
nonorthogonal coordinates,
\item the evolution problem is restricted to the backward lightcone of the blowup point and is thus
only well--posed in forward time which is reminiscent of parabolic equations.
\end{itemize}
\item Then we perform a detailed spectral analysis of the semigroup generator and 
construct a Riesz projection of rank $1$ which
removes the unstable symmetry mode that results from the time translation invariance of Eq.~\eqref{eq:main}.
As a consequence, we obtain \emph{exponential} decay of the linear evolution on the codimension $1$ 
stable subspace.
\item Next, we prove a Lipschitz property of the nonlinearity
which allows us to run a fixed point argument in order to obtain a solution to the nonlinear problem.
That this is possible is not surprising since our norm controls sufficiently many derivatives
to obtain a Moser--type estimate.
Furthermore, the linear evolution decays exponentially and this kind of decay is reproduced by the 
Duhamel formula.
However, in order to suppress the instability of the linear evolution we have to modify the data.
This is similar to the Lyapunov--Perron method in dynamical systems theory.
\item Finally, we show that the aforementioned modification of the data is equivalent to changing
the blowup time.
Thus, by choosing the appropriate blowup time we obtain a solution of the original equation
\eqref{eq:main} with the properties stated in Theorem \ref{thm:main}.
\end{enumerate}

\subsection{Notation}
As usual, we write $a\lesssim b$ if $a\leq cb$ for some $c>0$.
Similarly, we use $\gtrsim$ and $a\simeq b$ means $a\lesssim b$ and $b\lesssim a$.
The big--O symbol has its standard meaning from asymptotic analysis.
In order to improve readability we use boldface letters for vectors and number the individual components
by subscripts, e.g.~$\mb u=(u_1,u_2)$.
The symbol $D^F$ denotes the Fr\'echet derivative and $D^F_j$ is used for the $j$--th partial
Fr\'echet derivative.
For a closed linear operator $A$ we write $\sigma(A)$, $\sigma_p(A)$, $R_A(\lambda)$ for the
spectrum, point spectrum and resolvent of $A$, respectively.

\section{Transformation to a first--order system and similarity coordinates}

\subsection{Formulation of the Cauchy problem}
As explained in the introduction, we intend to study the Cauchy problem \footnote{In order to
avoid notational clutter we usually omit the arguments and write $\psi$ instead of $\psi(t,r)$.}
\begin{equation}
\label{eq:mainCauchy}
\left \{ \begin{array}{l}
\psi_{tt}-\psi_{rr}-\tfrac{2}{r}\psi_r+\tfrac{3}{r^2}F(\psi)=0 \\
\psi(0,\cdot)=f, \quad \psi_t(0,\cdot)=g
\end{array} \right .
\end{equation}
for a function
$\psi: \mc{C}_T \to \mathbb{R}$
where 
$$ \mc{C}_T:=\{(t,r): t \in [0,T), r \in [0,T-t]\},\quad T>0 $$
and $f,g: [0,T] \to \mathbb{R}$ are prescribed, sufficiently regular functions (the initial data).
Furthermore, the nonlinearity $F$ is given by $F(\psi)=\psi(\psi+1)(\psi+2)$.
Note also that the requirement of regularity at the center demands
$\psi_r(t,0)=0$
and $\psi(t,0) \in \{0,-1,-2\}$ for all $t$.
We are interested in the stability of the blowup solution
$$ \psi^T(t,r)=W_0(\tfrac{r}{T-t})-1 $$
where 
$$ W_0(\rho)=\frac{1-\rho^2}{1+\frac35 \rho^2} $$
is the Bizo\'n solution.
Since $\psi^T(t,0)=0$ we restrict ourselves to solutions of Eq.~\eqref{eq:mainCauchy} that satisfy
$\psi(t,0)=0$ for all $t$.
In the following we perform some formal manipulations to transform \eqref{eq:mainCauchy} into
a convenient form suitable for further analysis.

We intend to study small perturbations of $\psi^T$ and thus,
it is reasonable to reformulate \eqref{eq:mainCauchy} relative to $\psi^T$, i.e., we 
insert the ansatz $\psi=\psi^T+\varphi$ into Eq.~\eqref{eq:mainCauchy} and obtain the Cauchy problem
\begin{equation}
\label{eq:mainCauchyrel}
\left \{ \begin{array}{l}
\varphi_{tt}-\varphi_{rr}-\tfrac{2}{r}\varphi_r+\tfrac{6}{r^2}\varphi+\tfrac{3}{r^2}[F'(\psi^T)-2]\varphi
+\tfrac{1}{r^2}N_T(\varphi)=0 \\
\varphi(0,\cdot)=f-\psi^T(0,\cdot), \quad \varphi_t(0,\cdot)=g-\psi^T_t(0,\cdot)
\end{array} \right .
\end{equation}
for the perturbation $\varphi: \mc{C}_T \to \mathbb{R}$.
Here, 
\begin{align} 
\label{eq:nl}
N_T(\varphi)&=3[F(\psi^T+\varphi)-F(\psi^T)-F'(\psi^T)\varphi] \\
&=9(\psi^T+1)\varphi^2+3\varphi^3 \nonumber
\end{align}
is the nonlinear remainder.
Observe further that  
$$ F'(\psi^T(t,0))=F'(0)=2 $$
and thus, by subtracting the constant $2$ we have regularized the ``potential term'' 
in such a way that $\frac{F'(\psi^T(t,r))-2}{r^2}$ remains bounded as $r \to 0+$.
Finally, the perturbation $\varphi$ inherits the boundary conditions $\varphi(t,0)=\varphi_r(t,0)=0$
for all $t$.
So far nothing has happened and Eq.~\eqref{eq:mainCauchyrel} is equivalent to 
Eq.~\eqref{eq:mainCauchy} if $\psi=\psi^T+\varphi$.
In order to fix terminology we call
$$ \varphi_{tt}-\varphi_{rr}-\tfrac{2}{r}\varphi_r+\tfrac{6}{r^2}\varphi=0 $$
the \emph{free equation},
$$ \varphi_{tt}-\varphi_{rr}-\tfrac{2}{r}\varphi_r+\tfrac{6}{r^2}\varphi
+\tfrac{3}{r^2}[F'(\psi^T)-2]\varphi=0 $$
the \emph{linear} or \emph{linearized equation} and, finally, the full problem
Eq.~\eqref{eq:mainCauchyrel} is referred to as the \emph{nonlinear equation}.
Note carefully that we have assigned all singular terms to the free equation.
This is necessary since our overall strategy is to treat the nonlinear equation as a perturbation
of the linearized equation which, in turn, is viewed as a perturbation of the free equation.
Therefore, the topology is dictated by the free equation.

\subsection{Higher energy norm}
\label{subsec:higherenergy}
As already outlined in the introduction, the energy topology is too weak to study self--similar 
blowup.
Consequently, we have to find a stronger norm and it is advantageous if  
this norm is naturally associated to the free equation.
Furthermore, we intend to control the nonlinearity by a Moser--type estimate and therefore, we 
expect to need at least $\frac{5}{2}+$ derivatives (recall that the Yang--Mills problem
is in $1+5$ dimensions).
For simplicity, however, we avoid fractional Sobolev spaces and aim
for a norm that controls $3$ derivatives.
The key observation in this respect is that, if we set 
$$ \hat{\varphi}(t,r):=\tfrac{1}{r}\partial_r[\tfrac{1}{r}\partial_r(r^3 \varphi(t,r))], $$
we obtain the identity
$$ \hat{\varphi}_{tt}-\hat{\varphi}_{rr}=
\tfrac{1}{r}\partial_r \left \{\tfrac{1}{r}\partial_r\left [
r^3 \left (\varphi_{tt}-\varphi_{rr}-\tfrac{2}{r}\varphi_r
+\tfrac{6}{r^2}\varphi \right ) \right ]\right \}. $$
Thus, if $\varphi$ satisfies the free equation then $\hat{\varphi}$ is a solution to the 
one--dimensional wave equation on the half--line.
Furthermore, since $\varphi(t,0)=\varphi_r(t,0)=0$ implies 
$\hat{\varphi}(t,0)=0$, it follows that
\begin{equation} 
\label{eq:higherenergy}
\int_0^\infty \left [ \hat{\varphi}_t(t,r)^2+\hat{\varphi}_r(t,r)^2\right ]dr=\mathrm{const}. 
\end{equation}
The point is that the conserved quantity \eqref{eq:higherenergy} induces a stronger topology
than the energy since it contains third derivatives of $\varphi$.
Consequently, we refer to \eqref{eq:higherenergy} as a \emph{higher energy} for the free equation.
Moreover, if we truncate the domain of integration in \eqref{eq:higherenergy} to the backward
lightcone $\mc{C}_T$, we obtain a local version of the higher energy given by
\begin{equation}
\label{eq:lochigherenergy}
\int_0^{T-t}
\left [ \hat{\varphi}_t(t,r)^2+\hat{\varphi}_r(t,r)^2 \right ]dr. 
\end{equation}
A simple scaling argument (or a straightforward computation) then shows that the higher energy for the blowup 
solution $\psi^T$ behaves like $(T-t)^{-3}$
and thus, unlike the original energy, the local higher energy \eqref{eq:lochigherenergy} is strong enough to detect 
self--similar blowup.
Consequently, we study the Cauchy problem Eq.~\eqref{eq:mainCauchyrel} in the topology induced
by \eqref{eq:lochigherenergy}.

\subsection{First--order formulation}
We intend to formulate Eq.~\eqref{eq:mainCauchyrel} as a first--order system in time.
To this end, we introduce two auxiliary fields $\varphi_1$, $\varphi_2$ by
\begin{align}
\label{eq:varphi12}
\varphi_1(t,r)&:=\tfrac{r^3}{(T-t)^2} \varphi_t(t,r) \\
\varphi_2(t,r)&:=(T-t)\tfrac{1}{r}\partial_r \left [\tfrac{1}{r}\partial_r \left (r^3 \varphi(t,r) 
\nonumber
\right ) \right ]. 
\end{align}
The definition of the field $\varphi_2$ is motivated by the discussion in Section 
\ref{subsec:higherenergy}.
In fact, apart from the factor $T-t$ in front, $\varphi_2$ is exactly the function 
$\hat{\varphi}$ from Section \ref{subsec:higherenergy}.
The factor $T-t$ is introduced to put $\varphi_2$ on the same scaling level as the original
field $\varphi$.
The field $\varphi_1$ is a suitably scaled time derivative of $\varphi$ which leads to a simple
expression for the higher energy \eqref{eq:lochigherenergy} in terms of $\varphi_1$ and $\varphi_2$.
Note further that, on any time slice $t=\mathrm{const}$, $\varphi$ can be reconstructed from 
$\varphi_2$ by 
$$ \varphi(t,r)=\tfrac{1}{(T-t)r^3}(K^2 \varphi_2(t,\cdot))(r) $$
where the integral operator
$$ Kf(\rho):=\int_0^\rho s f(s)ds $$
will appear frequently in the sequel.
Furthermore, a straightforward computation shows
$$ \varphi_{rr}+\tfrac{2}{r}\varphi_r-\tfrac{6}{r^2}\varphi
=\tfrac{1}{T-t}\left [\tfrac{1}{r} \varphi_2-\tfrac{3}{r^3}K\varphi_2 \right ] $$
where $K\varphi_2$ is an abbreviation for $(K\varphi_2(t,\cdot))(r)$.
Consequently, Eq.~\eqref{eq:mainCauchyrel} transforms into
\begin{equation}
\label{eq:varphi1st}
\left \{\begin{array}{ll}
\partial_t \varphi_1=\frac{2\varphi_1}{T-t}+\frac{r^2 \varphi_2}{(T-t)^3}
-\frac{3K\varphi_2}{(T-t)^3}-\frac{3F'(\psi^T)-6}{(T-t)^3r^2}K^2 \varphi_2
-\frac{r}{(T-t)^2}N_T \left (\frac{K^2\varphi_2}{(T-t)r^3} 
\right )\\
\partial_t \varphi_2=(T-t)^3\tfrac{1}{r}\partial_r \left (\tfrac{1}{r}\partial_r \varphi_1 \right )
-\tfrac{\varphi_2}{T-t}
\end{array} \right .
\end{equation}
for $\varphi_j: \mc{C}_T \to \mathbb{R}$, $j=1,2$,
with initial data
\begin{equation}
\label{eq:varphi1stdata1}
\begin{aligned}
\varphi_1(0,r)&=\tfrac{r^3}{T^2} [g(r)-\psi_t^T(0,r)] \\
\varphi_2(0,r)&=T(\tfrac{1}{r}\partial_r)^2 [r^3 (f(r)-\psi^T(0,r))].
\end{aligned}
\end{equation}
In order to write this in a more concise form we introduce a differential operator $\mc D^2$ given by
\begin{equation}
\label{eq:defD2}
\mc D^2 f(r):=r f''(r)+5 f'(r)+\tfrac{3}{r}f(r).
\end{equation}
Then we have $\mc D^2 f(r)=(\frac{1}{r}\partial_r)^2 [r^3 f(r)]$ and thus, the initial data
in Eq.~\eqref{eq:varphi1stdata1} can be written as
\begin{equation}
\label{eq:varphi1stdata}
\begin{aligned}
\varphi_1(0,r)&=\tfrac{r^3}{T^2} [g(r)-\psi_t^T(0,r)] \\
\varphi_2(0,r)&=T\mc D^2 [f-\psi^T(0,\cdot)](r).
\end{aligned}
\end{equation}

\subsection{Similarity coordinates}

Note that, via $\psi^T$, both the ``potential term'' and the nonlinearity in Eq.~
\eqref{eq:varphi1st} depend explicitly on $t$.
More precisely, they depend on the ratio $\frac{r}{T-t}$.
In view of the self--similar character of the problem it is thus natural to introduce
adapted coordinates (``similarity variables'') by setting
$$\tau:=-\log(T-t),\quad \rho:=\frac{r}{T-t}.$$ 
The inverse map is given by
$$ t=T-e^{-\tau},\quad r=e^{-\tau}\rho $$
and the derivatives transform according to
$$ \partial_t=e^\tau (\partial_\tau+\rho \partial_\rho),\quad \partial_r=e^\tau \partial_\rho. $$
Furthermore, under the transformation $(t,r) \mapsto (\tau,\rho)$, the backward lightcone $\mc{C}_T$
is mapped to the infinite cylinder 
$$ \mc{Z}_T:=\{(\tau,\rho): \tau\geq -\log T, \rho \in [0,1]\}. $$
Consequently, in the new coordinates $(\tau,\rho)$ the blowup takes place at infinity.
By setting 
$$\phi_j(\tau,\rho):=\varphi_j(T-e^{-\tau},e^{-\tau} \rho),\quad j=1,2 $$ we obtain from 
Eq.~\eqref{eq:varphi1st} the system
\begin{equation}
\label{eq:phi1st}
\left \{
\begin{array}{l}
\partial_\tau \phi_1=-\rho \partial_\rho \phi_1+2\phi_1+\rho^2 \phi_2-3 K\phi_2
-V(\rho)K^2\phi_2-\rho N_T\left (\frac{1}{\rho^3}K^2\phi_2 \right )\\
\partial_\tau \phi_2=\tfrac{1}{\rho}\partial_\rho\left (\frac{1}{\rho}\partial_\rho \phi_1 \right)
-\rho \partial_\rho \phi_2-\phi_2
\end{array} \right .
\end{equation}
for functions $\phi_j: \mc{Z}_T \to \mathbb{R}$, $j=1,2$, with data
\begin{equation}
\label{eq:phi1stdata}
\begin{aligned}
\phi_1(-\log T,\rho)&=T\rho^3[g(T\rho)-\psi_t^T(0,T\rho)] \\
\phi_2(-\log T,\rho)&=T\mc D^2[f-\psi^T(0,\cdot)](T\rho)
\end{aligned}
\end{equation}
and the potential
\begin{equation}
\label{eq:V}
V(\rho)=\frac{3F'(W_0(\rho)-1)-6}{\rho^2}=-144 \frac{5-\rho^2}{(5+3\rho^2)^2}.
\end{equation}
Note carefully that, by transforming to similarity variables, all the explicit dependencies 
on the time variable have disappeared and we 
have effectively
reduced the study of the self--similar blowup solution $\psi^T$ to a small data asymptotic stability
problem given by Eq.~\eqref{eq:phi1st}.
The analysis of Eq.~\eqref{eq:phi1st} is the content of the present paper.

\section{Linear perturbation theory}
\label{sec:lin}

In this section we study the linearized problem that results from Eq.~\eqref{eq:phi1st} by
dropping the nonlinear term.
Actually, we start with the free problem which follows from Eq.~\eqref{eq:phi1st} by dropping the
nonlinearity \emph{and} the potential term.
Our approach is operator--theoretic.
The point is that we need to employ semigroup theory in order to solve the free problem
since the transformation to the nonorthogonal coordinate system $(\tau,\rho)$ has in fact destroyed
the underlying self--adjoint structure of the wave operator.
Consequently, we rewrite Eq.~\eqref{eq:phi1st} as an ordinary differential equation (in $\tau$)
on a suitable Hilbert space which is dictated by the local higher energy defined in 
\eqref{eq:lochigherenergy}.
Then we prove well--posedness of the free problem by an application of the Lumer--Phillips theorem.
The analogous result for the linearized problem follows by a general abstract perturbation argument, 
although the corresponding growth bound of the evolution that is obtained by this procedure 
is far from being optimal.
In order to improve this bound, we perform a more detailed spectral analysis.
It turns out that the linearized time evolution exhibits an inherent instability which is a 
manifestation of the time translation invariance of the original problem.
We show how to construct a suitable spectral projection that removes this ``artificial'' 
instability and proceed by proving a decay bound for the linearized evolution on the stable subspace.
This result, which is almost optimal, concludes the study of the linearized problem.

\subsection{Function spaces and well--posedness of the linear problem}
We are going to need the following version of Hardy's inequality.

\begin{lemma}
\label{lem:Hardy}
Let $\alpha>1$ and assume that $u \in C[0,1]$ has a weak derivative as well as
$$ \lim_{\rho \to 0+}\frac{|u(\rho)|^2}{\rho^{\alpha-1}}=0. $$
Then
$$ \int_0^1 \frac{|u(\rho)|^2}{\rho^\alpha}d\rho\leq \left (\frac{2}{\alpha-1}\right )^2
\int_0^1 \frac{|u'(\rho)|^2}{\rho^{\alpha-2}}d\rho. $$
\end{lemma}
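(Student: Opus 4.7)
The plan is to run the classical Hardy-inequality derivation based on an integration by parts followed by Cauchy--Schwarz, with a truncation at $\rho=\varepsilon>0$ to make the manipulations rigorous. The key identity is
\[
\rho^{-\alpha} = -\frac{1}{\alpha-1}\frac{d}{d\rho}\rho^{1-\alpha},
\]
which is available precisely because $\alpha>1$. Using it to rewrite the left-hand side and integrating by parts on $[\varepsilon,1]$ produces three contributions: a boundary term at $\rho=1$ equal to $-\tfrac{1}{\alpha-1}|u(1)|^2\le 0$, which is harmless and can be dropped; a boundary term at $\rho=\varepsilon$ equal to $\tfrac{1}{\alpha-1}|u(\varepsilon)|^2/\varepsilon^{\alpha-1}$, which vanishes as $\varepsilon\to 0+$ by the explicit hypothesis; and the interior term $\tfrac{2}{\alpha-1}\int_\varepsilon^1 \rho^{1-\alpha}\,\mathrm{Re}\bigl(\overline{u(\rho)}\,u'(\rho)\bigr)\,d\rho$, where I have used $(|u|^2)' = 2\,\mathrm{Re}(\overline{u}u')$.

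Next I would apply Cauchy--Schwarz to the interior term after splitting the weight as $\rho^{1-\alpha}=\rho^{-\alpha/2}\cdot\rho^{1-\alpha/2}$. Writing $I_\varepsilon := \int_\varepsilon^1 |u|^2/\rho^\alpha\,d\rho$ and $J := \int_0^1 |u'|^2/\rho^{\alpha-2}\,d\rho$, this leads to
\[
I_\varepsilon \;\le\; \frac{|u(\varepsilon)|^2}{(\alpha-1)\varepsilon^{\alpha-1}} + \frac{2}{\alpha-1}\,I_\varepsilon^{1/2} J^{1/2}.
\]
Letting $\varepsilon\to 0+$, the first term on the right drops out and monotone convergence replaces $I_\varepsilon$ by $I := \int_0^1 |u|^2/\rho^\alpha\,d\rho$ (with the limit allowed to be $+\infty$), giving $I \le \tfrac{2}{\alpha-1}\,I^{1/2} J^{1/2}$. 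If $J=\infty$ the claim is trivial; otherwise this forces $I<\infty$, and dividing by $I^{1/2}$ (the case $I=0$ being trivial) produces the sharp constant $\bigl(\tfrac{2}{\alpha-1}\bigr)^2$.

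The main subtlety is the \emph{a priori} finiteness issue at the step where one divides by $I^{1/2}$; the truncation-and-limit procedure above is tailored precisely to handle it. The weak differentiability of $u$ is enough for the integration by parts, since $|u|^2=u\overline{u}$ has weak derivative $2\,\mathrm{Re}(\overline{u}u')$ which is locally integrable on $(0,1]$, and the explicit smallness hypothesis at $\rho=0$ is exactly what is needed to kill the boundary term at $\rho=\varepsilon$ in the limit. No further difficulty is anticipated.
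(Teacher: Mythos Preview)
Your approach is exactly the one the paper has in mind (integration by parts and Cauchy--Schwarz). There is one small slip in the limit step: from $I \le \tfrac{2}{\alpha-1}I^{1/2}J^{1/2}$ with $0<J<\infty$ you cannot conclude $I<\infty$, since if $I=\infty$ the inequality reads $\infty\le\infty$ and is vacuous. The standard fix is to solve the quadratic inequality $I_\varepsilon \le B_\varepsilon + \tfrac{2}{\alpha-1}I_\varepsilon^{1/2}J^{1/2}$ for the finite quantity $I_\varepsilon^{1/2}$ \emph{before} passing to the limit, obtaining $I_\varepsilon^{1/2}\le \tfrac{1}{\alpha-1}J^{1/2}+\bigl(\tfrac{1}{(\alpha-1)^2}J+B_\varepsilon\bigr)^{1/2}$; then $\varepsilon\to 0+$ gives $I^{1/2}\le\tfrac{2}{\alpha-1}J^{1/2}$ directly, with finiteness of $I$ built in.
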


\begin{proof}
This follows by integration by parts and the Cauchy--Schwarz inequality.
\end{proof}

We set 
$$ \tilde{\mc{H}}:=\left \{\mb{u}=(u_1,u_2)\in C^4[0,1] \times C^1[0,1]: u_1^{(k)}(0)=u_2(0)=0,\: 
k=0,1,2,3 \right \} $$
and define a sesquilinear form $(\cdot|\cdot)$ by
$$ (\mb{u}|\mb{v}):=(u_1|v_1)_1+(u_2|v_2)_2:=
\int_0^1 D^2 u_1(\rho)\overline{D^2 v_1(\rho)}d\rho
+\int_0^1 u_2'(\rho)\overline{v_2'(\rho)}d\rho $$
where $Df(\rho):=\frac{1}{\rho}f'(\rho)$.
Note that $(\cdot|\cdot)$ is chosen in such a way that it leads to the local higher energy
Eq.~\eqref{eq:lochigherenergy}.

\begin{lemma}
\label{lem:basicH}
The sesquilinear form $(\cdot|\cdot)$ defines an inner product on $\tilde{\mc{H}}$ and the
completion of $\tilde{\mc{H}}$, denoted by $\mc{H}$, is a Hilbert space.
Furthermore, the subspace $C^\infty_c(0,1] \times C^\infty_c(0,1]$ of $\mc{H}$ is dense
and $\mb{u} \in \mc{H}$ implies $\mb{u} \in C^1[0,1]\times C[0,1]$ with the boundary conditions
$u_1(0)=u_1'(0)=u_2(0)=0$.
\end{lemma}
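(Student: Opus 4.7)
The plan is to verify positive definiteness of $(\cdot|\cdot)$, realize the abstract completion as a subspace of $C^1[0,1]\times C[0,1]$ by inverting the differential operators appearing in the norm, and establish density of smooth compactly supported pairs via a cut-off plus mollification argument. The main obstacle is the weighted bookkeeping at the origin: the operator $D^2$ combines derivatives with $\rho$-weights of different homogeneity, so throughout one has to balance the powers of $\rho$ gained from the vanishing boundary conditions against the ones lost from the negative weights.

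First I check positive definiteness. Sesquilinearity and non-negativity are immediate; suppose $(\mb u|\mb u)=0$ for $\mb u\in\tilde{\mc H}$. Then $u_2'\equiv 0$ together with $u_2(0)=0$ yields $u_2\equiv 0$. Also $D^2 u_1=\tfrac{1}{\rho}(u_1'/\rho)'\equiv 0$, so $u_1(\rho)=a\rho^2+b$, and the vanishing conditions $u_1^{(k)}(0)=0$ for $k=0,1,2,3$ force $a=b=0$. Hence $(\cdot|\cdot)$ is an inner product on $\tilde{\mc H}$, and the standard abstract completion $\mc H$ is automatically a Hilbert space.

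Next I realize elements of $\mc H$ as function pairs. The linear map $T\colon\tilde{\mc H}\to L^2(0,1)\times L^2(0,1)$, $T\mb u:=(D^2u_1,u_2')$, is an isometry and extends to an isometric embedding $\bar T\colon\mc H\to L^2\times L^2$. For $\mb u\in\tilde{\mc H}$, integrating the identity $(u_1'/\rho)'=\rho D^2u_1$ from $0$ (using $\lim_{\rho\to 0+}u_1'(\rho)/\rho=u_1''(0)=0$) and then $u_2$ from $u_2'$ yields the reconstruction formulas
\[ u_1(\rho)=\int_0^\rho s\left(\int_0^s t\,D^2 u_1(t)\,dt\right)ds,\qquad u_2(\rho)=\int_0^\rho u_2'(s)\,ds. \]
Two applications of Cauchy-Schwarz give the pointwise bounds $|u_1'(\rho)|\lesssim \rho^{5/2}\|D^2u_1\|_{L^2}$ and $|u_2(\rho)|\lesssim \rho^{1/2}\|u_2'\|_{L^2}$, so the inclusion $\tilde{\mc H}\to C^1[0,1]\times C[0,1]$ is continuous with respect to $\|\cdot\|_{\mc H}$. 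It therefore extends by density to a continuous linear injection $\mc H\hookrightarrow C^1[0,1]\times C[0,1]$, injectivity following because $\bar T$ is. This delivers both the claimed regularity and the boundary values $u_1(0)=u_1'(0)=u_2(0)=0$.

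For the density claim I fix a family of smooth cut-offs $\chi_n\in C^\infty[0,1]$ with $\chi_n(\rho)=0$ for $\rho\leq 1/n$, $\chi_n(\rho)=1$ for $\rho\geq 2/n$, and $|\chi_n^{(k)}|\lesssim n^k$, all of whose derivatives are supported in $[1/n,2/n]$. Given $\mb u\in\tilde{\mc H}$, the product rule expansion
\[ D^2(\chi_n u_1)=\chi_n\, D^2 u_1+\frac{\chi_n'' u_1+2\chi_n' u_1'}{\rho^2}-\frac{\chi_n' u_1}{\rho^3} \]
leaves four pieces to control. The Taylor estimates $u_1(\rho)=O(\rho^4)$ and $u_1'(\rho)=O(\rho^3)$, forced by the vanishing conditions, render each of the three error terms uniformly bounded on $[1/n,2/n]$, so their $L^2$-norms are $\lesssim n^{-1/2}$, while $\|(1-\chi_n)D^2u_1\|_{L^2}\to 0$ by dominated convergence. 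An analogous computation with $(\chi_n u_2)'=\chi_n'u_2+\chi_n u_2'$ and $u_2=O(\rho)$ controls the second component. Thus $\chi_n\mb u\to \mb u$ in $\mc H$; since $\chi_n\mb u$ vanishes on $[0,1/n]$, a standard interior mollification finishes the approximation by pairs in $C^\infty_c(0,1]\times C^\infty_c(0,1]$.
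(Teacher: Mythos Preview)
Your proof is correct, but the density argument takes a genuinely different route from the paper's. The paper exploits the fact that the isometry $T\mb u=(D^2 u_1,u_2')$ has an explicit right inverse: given $\tilde v\in C^\infty_c(0,1]$ approximating $D^2 u_1$ in $L^2(0,1)$, one simply sets $v:=K^2\tilde v$ (where $Kf(\rho)=\int_0^\rho sf(s)\,ds$), observes that $K^2$ preserves $C^\infty_c(0,1]$, and obtains $\|D^2(u_1-v)\|_{L^2}<\varepsilon$ in one line. This bypasses the Leibniz expansion and power counting entirely and requires no mollification step. Your cutoff approach is more hands-on and carries the bookkeeping you flagged at the outset, but it has the advantage of being self-contained and not relying on the special algebraic structure of $D^2$ having a nice integral inverse; it would transfer more readily to situations where such an inverse is unavailable. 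Conversely, for the embedding $\mc H\hookrightarrow C^1\times C$ your treatment via the reconstruction formulas and the factorization through $\bar T$ is more complete than the paper's, which records only the single pointwise bound $|\rho^{-1}u_1'(\rho)|\lesssim \|D^2 u_1\|_{L^2}$ and leaves the identification of the abstract completion with a function space implicit.
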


\begin{proof}
From Hardy's inequality we obtain the estimate
$$ \int_0^1 |D^2 u_1(\rho)|^2d\rho \lesssim \int_0^1 |u_1^{(4)}(\rho)|^2 d\rho $$
for $\mb{u}\in \tilde{\mc{H}}$
which shows that $(\cdot|\cdot)$ is well--defined on all of $\tilde{\mc{H}}\times \tilde{\mc{H}}$.
Furthermore, the assumed boundary conditions ensure that $(\mb{u}|\mb{u})=0$ if and only if
$\mb{u}=0$.
By the density of $C^\infty_c(0,1]$ in $L^2(0,1)$ we can, for any $\epsilon>0$, find a function
$\tilde{v} \in C^\infty_c(0,1]$ such that $\|D^2 u_1-\tilde{v}\|_{L^2(0,1)}<\epsilon$.
By setting $v:=K^2 \tilde{v}$ we obtain $v\in C^\infty_c(0,1]$ with $\|D^2(u_1-v)\|_{L^2(0,1)}<\epsilon$
which yields the claimed density property.
Finally, we note that
\[ |\tfrac{1}{\rho}u_1'(\rho)|\leq \int_0^1 |\partial_\rho[\tfrac{1}{\rho}u_1'(\rho)]|d\rho
\leq \int_0^1 |D^2 u_1(\rho)|^2 d\rho \]
by Cauchy--Schwarz.
\end{proof}

Now we set 
$$ \mc{D}(\tilde{\mb{L}}_0):=\left \{\mb{u}=(u_1,u_2)\in C^\infty[0,1] \times C^\infty[0,1]: 
u_1^{(k)}(0)=u_2(0)=0,\:\: k=0,1,2,3,4 \right \}$$ and define a differential operator
on $\mc{D}(\tilde{\mb{L}}_0)$ by 
$$ \tilde{\mb{L}}_0\mb{u}(\rho):=\left ( \begin{array}{c}
-\rho u_1'(\rho)+2u_1(\rho)+\rho^2 u_2(\rho)-3Ku_2(\rho) \\
D^2 u_1(\rho)-\rho u_2'(\rho)-u_2(\rho) \end{array} \right ) $$
where, as before, $Kf(\rho):=\int_0^\rho sf(s)ds$.
At this point it is important to note that
$$ \rho^2 u_2(\rho)-3 K u_2(\rho)=O(\rho^4) $$
instead of only $O(\rho^3)$ as one might expect at first glance.
This is due to a special cancellation.
As a consequence we see that 
$$[\tilde{\mb{L}}_0\mb{u}]_1^{(k)}(\rho)=O(\rho^{4-k}),\quad k=0,1,2,3,4 $$ 
where $[\tilde{\mb{L}}_0 \mb{u}]_j$, $j=1,2$, denotes the $j$--th component of 
$\tilde{\mb{L}}_0\mb{u}$.
Similarly, we have 
$$[\tilde{\mb{L}}_0 \mb{u}]_2^{(k)}(\rho)=O(\rho^{1-k})$$ 
for $k=0,1$ and we conclude
that $\tilde{\mb{L}}_0$ has range in $\tilde{\mc{H}}$.
Comparison with Eq.~\eqref{eq:phi1st} shows that $\tilde{\mb{L}}_0$ represents the right--hand side
of the free problem and Lemma \ref{lem:basicH} implies that $\tilde{\mb{L}}_0$ is densely defined.
Furthermore, in view of the definitions of $\varphi_1, \varphi_2$ in \eqref{eq:varphi12}, the
boundary conditions required in $\mc{D}(\tilde{\mb{L}}_0)$ are natural.

\begin{lemma}
\label{lem:WPfree}
The operator $\tilde{\mb{L}}_0: \mc{D}(\tilde{\mb{L}}_0)\subset \mc{H} \to \mc{H}$ is 
closable and its closure $\mb{L}_0$ generates a strongly continuous one--parameter semigroup 
$\mb{S}_0: [0,\infty) \to \mc{B}(\mc{H})$ that satisfies
$$ \|\mb{S}_0(\tau)\|\leq e^{-\frac32 \tau} $$
for all $\tau \geq 0$.
In particular, the Cauchy problem
$$ \left \{
\begin{array}{l}
\frac{d}{d\tau}\Phi(\tau)=\mb{L}_0 \Phi(\tau) \\
\Phi(0)=\mb{u} \in \mc{H}
\end{array} \right . $$
has a unique mild solution $\Phi: [0,\infty) \to \mc{H}$ 
given by $\Phi(\tau)=\mb{S}_0(\tau)\mb{u}$.
\end{lemma}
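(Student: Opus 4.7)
The plan is to apply the Lumer--Phillips theorem. Since $\tilde{\mb{L}}_0$ is densely defined by Lemma \ref{lem:basicH}, it suffices to establish (a) the dissipativity estimate $\mathrm{Re}(\tilde{\mb{L}}_0\mb{u}|\mb{u}) \le -\tfrac{3}{2}\|\mb{u}\|^2$ for every $\mb{u}\in \mc{D}(\tilde{\mb{L}}_0)$ and (b) the density of $\mathrm{ran}(\lambda-\tilde{\mb{L}}_0)$ in $\mc{H}$ for some single $\lambda>-\tfrac{3}{2}$. Together these yield closability of $\tilde{\mb{L}}_0$ and the bound $\|\mb{S}_0(\tau)\|\le e^{-3\tau/2}$; the mild--solution statement is then the standard characterization of $C_0$--semigroups.

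For (a), the strategy is to exploit the algebraic fact that the third--order operator $D^2$ entering the $\mc{H}$--inner product intertwines cleanly with $\tilde{\mb{L}}_0$. Setting $w_1:=D^2 u_1$, a direct calculation yields the identities
\[
D^2[-\rho u_1'+2u_1] = -\rho w_1' - 2w_1, \qquad D^2[\rho^2 u_2 - 3 K u_2] = u_2'',
\]
the second being an exact cancellation that explains the precise coefficient $-3$ in front of the Volterra term, along with $\partial_\rho[\tilde{\mb{L}}_0\mb{u}]_2 = w_1' - 2u_2' - \rho u_2''$ for the second component. Integrating by parts the transport pieces $\int_0^1 \rho w_1'\overline{w_1}\,d\rho$ and $\int_0^1 \rho u_2''\overline{u_2'}\,d\rho$ produces the principal contribution $-\tfrac{3}{2}\|\mb{u}\|^2$ together with the non--positive boundary terms $-\tfrac{1}{2}|w_1(1)|^2$ and $-\tfrac{1}{2}|u_2'(1)|^2$ at $\rho=1$; the contributions at $\rho=0$ drop out because the conditions $u_1^{(k)}(0)=0$ for $k\le 4$ in $\mc{D}(\tilde{\mb{L}}_0)$ propagate to $w_1(0)=0$ via Taylor expansion. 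The residual coupling integrals assemble into a single total derivative,
\[
\mathrm{Re}\int_0^1 \bigl(u_2''\overline{w_1}+w_1'\overline{u_2'}\bigr)\,d\rho \;=\; \mathrm{Re}\bigl[u_2'(1)\overline{w_1(1)}\bigr],
\]
which is then absorbed by the negative boundary contributions via AM--GM, delivering the sharp constant $-\tfrac{3}{2}$.

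For (b), the approach is to fix $\lambda>0$ large and solve $(\lambda-\tilde{\mb{L}}_0)\mb{u}=\mb{f}$ for $\mb{f}$ ranging over the dense subspace $C_c^\infty(0,1]\times C_c^\infty(0,1]\subset\mc{H}$. The two scalar equations
\[
\rho u_1' + (\lambda-2)u_1 = f_1 + \rho^2 u_2 - 3 K u_2, \qquad \rho u_2' + (\lambda+1)u_2 = f_2 + D^2 u_1,
\]
are each first--order linear in the unknown on the left, so multiplication by the integrating factors $\rho^{\lambda-2}$ and $\rho^{\lambda+1}$ and selection of the unique solution regular at the origin converts the pair into a coupled Volterra system on $[0,1]$, uniquely solvable by contraction once $\lambda$ is taken sufficiently large. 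I expect this to be the main technical step: one must check that the Volterra solution is smooth on all of $[0,1]$ and satisfies the high--order vanishing conditions $u_1^{(k)}(0)=0$ for $k\le 4$ and $u_2(0)=0$ demanded by $\mc{D}(\tilde{\mb{L}}_0)$. Both should follow from the compact support of $\mb{f}$ away from the origin together with the positivity of the exponents $\lambda-2,\lambda+1$, any prescribed order of vanishing being forced by enlarging $\lambda$ if necessary.
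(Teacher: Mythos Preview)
Your part (a) is correct and is exactly the paper's computation, only packaged slightly differently: the paper writes the same identities via the commutator $[D^2,\rho^2 D]=4D^2$, obtains the same boundary terms $-\tfrac12|D^2u_1(1)|^2$, $-\tfrac12|u_2'(1)|^2$, and the same cross term $\mathrm{Re}[D^2u_1(1)\,\overline{u_2'(1)}]$, then closes with Cauchy--Schwarz just as you do.

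Part (b), however, has a genuine gap. The coupling in the second equation is $D^2u_1$, a \emph{third--order differential} operator in $u_1$, so the pair is not a Volterra system in the sense you need. If you solve the first equation by its integrating factor to express $u_1$ as an integral of $u_2$, then $D^2u_1$ still contains $u_2'$ (one net derivative after the single integration), and feeding this into the integrating--factor solution of the second equation produces a map $u_2\mapsto u_2$ that is order zero, not smoothing; no enlargement of $\lambda$ makes this a contraction in any norm you have available. Relatedly, your expectation that the vanishing order at $\rho=0$ can be ``forced by enlarging $\lambda$'' is not correct: writing $u_1(\rho)=\rho^{2-\lambda}\int_0^\rho s^{\lambda-3}h(s)\,ds$ with $h=O(\rho^k)$ gives $u_1=O(\rho^k)$, independent of $\lambda$. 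Finally, you do not anticipate the degeneration at $\rho=1$. Eliminating one unknown from your two scalar equations (e.g.\ using the second to write $w_1:=D^2u_1=\rho u_2'+(\lambda+1)u_2-f_2$ and substituting into $D^2$ of the first) yields a second--order ODE for $u_2$ with leading coefficient $(\rho^2-1)$; this regular singular point on the boundary is precisely where one must work to show that a smooth solution exists.

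The paper avoids all of this by choosing the concrete value $\lambda=2$, reducing to a scalar ODE, and writing down the solution explicitly:
\[
u(\rho)=\frac{\rho^3}{(1-\rho^2)^2}\int_\rho^1\frac{1-s^2}{s^4}\bigl[f_1(s)+s^2Kf_2(s)\bigr]\,ds,
\qquad u_2:=Du,\qquad u_1:=K(\rho^2Du)+Ku-K^2f_2.
\]
The apparent singularity at $\rho=1$ is removed by Taylor expansion (the integrand has a factor $1-s^2$), and the required orders of vanishing at $\rho=0$, in particular $u_1^{(k)}(0)=0$ for $k\le 4$, are read off directly. If you want to keep a soft argument for the range condition, one clean route is to use the second equation to set $u_1=K^2[\rho u_2'+(\lambda+1)u_2-f_2]$ and then analyze the resulting scalar equation for $u_2$ with the $(1-\rho^2)$ degeneracy in view, rather than trying to iterate the first--order pair.
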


\begin{proof}
According to the Lumer--Phillips Theorem (see \cite{EN00}, p.~~83, Theorem 3.15), it suffices to
show that 
\begin{itemize}
\item $\mathrm{Re}(\tilde{\mb{L}}_0\mb{u}|\mb{u})\leq -\frac32 \|\mb{u}\|^2$ for all $\mb{u} \in 
\mc{D}(\tilde{\mb{L}}_0)$ and
\item the range of $\lambda-\tilde{\mb{L}}_0$ is dense in $\mc{H}$ for some $\lambda>-\frac32$.
\end{itemize}
In the following, we employ a common
abuse of notation and use the symbol $\rho$ to denote both the independent variable and the
identity function.
Furthermore, all integrals run from $0$ to $1$ and we omit denoting the measure $d\rho$.
In order to estimate $\mathrm{Re}(\tilde{\mb{L}}_0 \mb{u}|\mb{u})$, we start by collecting all terms
that only contain $u_1$ and integrate by parts to obtain
\begin{align*} 
-\mathrm{Re}\int D^2 (\rho^2 D u_1)\overline{D^2 u_1}+2\int |D^2 u_1|^2
&=-\Re \int \rho (D^2 u_1)' \overline{D^2 u_1}-2 \int |D^2 u_1|^2 \\
&=-\tfrac12 |D^2 u_1(1)|^2-\tfrac32 \int |D^2 u_1|^2
\end{align*}
where we have used the commutator $[D^2, \rho^2 D]=4D^2$ and $\rho u_1'(\rho)=\rho^2 Du_1(\rho)$.
The boundary term at $0$ vanishes thanks to $u_1^{(k)}(0)=0$ for $k=0,1,2,3,4$.
Similarly, the terms containing only $u_2$ are given by
\begin{align*}
-\Re \int (\rho u_2')'\overline{u_2'}-\int |u_2'|^2&=
-\Re \int \rho u_2'' \overline{u_2'}-2\int |u_2'|^2\\
&=-\tfrac12 |u_2'(1)|^2-\tfrac32 \int |u_2'|^2.
\end{align*}
As a consequence, it suffices to show that the mixed terms are dominated by 
$\frac{1}{2}(|D^2 u_1(1)|^2+|u_2'(1)|^2)$ and
indeed we have
\begin{align*}
\Re &\int [D^2(\rho ^2 u_2)-3Du_2]\overline{D^2 u_1}+\Re \int (D^2 u_1)'\overline{u_2'} \\
&=\Re \int u_2'' \overline{D^2 u_1}+\Re [D^2 u_1(1) \overline{u_2'(1)}]-\Re \int (D^2 u_1)
\overline{u_2''} \\
&=\Re [D^2 u_1(1) \overline{u_2'(1)}]\leq \tfrac{1}{2}(|D^2 u_1(1)|^2+|u_2'(1)|^2)
\end{align*}
since $D^2 u_1(0)=0$.
Thus, we obtain $\Re(\tilde{\mb{L}}_0 \mb{u}|\mb{u})\leq -\frac32 \|\mb{u}\|^2$ as desired.
Note that this result is not surprising since it is just a reflection
of the fact that the higher energy \eqref{eq:higherenergy} is conserved for the free problem and
the factor $-\frac32$ can be concluded by a scaling argument.

It remains to show that the range of $\lambda-\tilde{\mb{L}}_0$ is dense in $\mc{H}$ for some 
$\lambda>-\frac32$.
To this end it suffices to show that the equation $(2-\tilde{\mb{L}}_0)\mb{u}=\mb{f}$ has a solution
$\mb{u}\in \mc{D}(\tilde{\mb{L}}_0)$ for any $\mb{f} \in C^\infty_c(0,1] \times C^\infty_c(0,1]$
(cf.~Lemma \ref{lem:basicH}).
The point is that the equation $(2-\tilde{\mb{L}}_0)\mb{u}=\mb{f}$ can be solved explicitly by 
elementary ODE methods.
We just state the result.
For given $\mb{f}=(f_1,f_2) \in C^\infty_c(0,1]\times C^\infty_c(0,1]$ 
define an auxiliary function $u$ by 
$$ u(\rho):=\frac{\rho^3}{(1-\rho^2)^2}\int_\rho^1 \frac{1-s^2}{s^4}\left [f_1(s)+s^2 Kf_2(s)
\right ]ds. $$
Observe that $u \in C^\infty[0,1]$ by Taylor expansion and $u^{(k)}(\rho)=O(\rho^{3-k})$ for
$k=0,1,2,3$.
Now set $u_2:=Du$.
Then we have $u_2 \in C^\infty[0,1]$ and $u_2(\rho)=O(\rho)$.
Furthermore, define $u_1:=K(\rho^2 Du)+Ku-K^2 f_2$ which implies
$u_1 \in C^\infty[0,1]$ and $u_1^{(k)}(\rho)=O(\rho^{5-k})$ for $k=0,1,\dots,5$.
Consequently, we obtain $\mb{u}=(u_1,u_2) \in \mc{D}(\tilde{\mb{L}}_0)$ and by straightforward
differentiation one verifies that indeed $(2-\tilde{\mb{L}}_0)\mb{u}=\mb{f}$.
Since $\mb{f}$ was arbitrary we are done.
\end{proof}

Next, we add the potential term from Eq.~\eqref{eq:phi1st} which is represented by the operator
$\mb{L}'$, defined
by
$$ \mb{L}' \mb{u}(\rho):=\left ( 
\begin{array}{c}
-V K^2 u_2 \\
0
\end{array} \right ) $$
with the smooth potential $V$ given explicitly in Eq.~\eqref{eq:V}. 
Since $V \in C^\infty[0,1]$, it follows that $\mb{L}' \in \mc{B}(\mc{H})$ (use Hardy's inequality) 
and we can immediately conclude
the well--posedness of the linearized problem.

\begin{corollary}
\label{cor:WPlin}
The operator $\mb{L}:=\mb{L}_0+\mb{L}'$ 
generates a strongly continuous one--parameter semigroup $\mb{S}: [0,\infty)
\to \mc{H}$ which satisfies
$$ \|\mb{S}(\tau)\|\leq e^{(-\frac32+\|\mb{L}'\|)\tau} $$
for all $\tau \geq 0$. In particular, the Cauchy problem
$$ \left \{ \begin{array}{l}
\frac{d}{d\tau}\Phi(\tau)=\mb{L} \Phi(\tau) \\
\Phi(0)=\mb{u} \in \mc{H}
\end{array} \right . $$
has a unique mild solution $\Phi: [0,\infty) \to \mc{H}$ 
given by $\Phi(\tau)=\mb{S}(\tau)\mb{u}$.
\end{corollary}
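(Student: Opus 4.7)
The plan is to deduce Corollary~\ref{cor:WPlin} from Lemma~\ref{lem:WPfree} by an off-the-shelf bounded perturbation argument for $C_0$-semigroups. The abstract result I will invoke (see, e.g., \cite{EN00}, Ch.~III) asserts that if $A$ generates a $C_0$-semigroup on a Banach space $X$ with $\|e^{tA}\| \leq Me^{\omega t}$ and if $B \in \mc{B}(X)$, then $A+B$, on the unchanged domain $\mc{D}(A)$, generates a $C_0$-semigroup satisfying $\|e^{t(A+B)}\| \leq Me^{(\omega + M\|B\|)t}$. Applying this with $A = \mb{L}_0$, $M = 1$, $\omega = -\tfrac{3}{2}$ (from Lemma~\ref{lem:WPfree}), and $B = \mb{L}'$ yields exactly the growth bound claimed in the corollary; the existence and uniqueness of the mild solution $\Phi(\tau) = \mb{S}(\tau)\mb{u}$ is then immediate from general $C_0$-semigroup theory.

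The single nontrivial input is the boundedness $\mb{L}' \in \mc{B}(\mc{H})$. Since $\mb{L}'\mb{u} = (-V K^2 u_2,\,0)$, this amounts to the estimate
\[ \int_0^1 \bigl| D^2(V K^2 u_2)(\rho) \bigr|^2 d\rho \lesssim \int_0^1 |u_2'(\rho)|^2 d\rho. \]
I would first note that the boundary condition $u_2(0) = 0$ forces, by iterating $Kf(\rho) = \int_0^\rho s f(s)\,ds$, the vanishing $K^2 u_2(\rho) = O(\rho^5)$ as $\rho \to 0+$. Combined with the smoothness of $V$ on $[0,1]$ that is explicit from \eqref{eq:V}, the Leibniz rule expands $D^2(V K^2 u_2)$ into a finite sum of terms of the schematic form $\rho^{-a}\,V^{(j)}(\rho)\,(K^2 u_2)^{(k)}(\rho)$, each of which is regular at the origin thanks to this vanishing. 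A finite number of applications of Lemma~\ref{lem:Hardy} --- trading inverse powers of $\rho$ for derivatives --- then yields the desired $L^2$-bound, ultimately controlled by $\|u_2'\|_{L^2(0,1)}$ and hence by $\|\mb{u}\|_{\mc{H}}$.

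I do not anticipate any genuine obstacle here: the semigroup statement is produced by a black-box perturbation theorem, and the only mildly tedious step is the boundedness verification for $\mb{L}'$, which is mechanical once the structure of $K^2 u_2$ near the origin is observed. I do however expect the resulting growth exponent $-\tfrac{3}{2} + \|\mb{L}'\|$ to be grossly pessimistic --- very likely positive, hence not yet yielding any decay. The purpose of the remaining subsections of Section~\ref{sec:lin}, as announced in its introductory paragraph, is precisely to replace this crude bound by a sharp exponential decay estimate on a codimension-one stable subspace via a detailed spectral analysis of $\mb{L}$.
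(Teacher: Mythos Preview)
Your proposal is correct and matches the paper's approach exactly: the paper's proof is the single sentence ``This is a consequence of the Bounded Perturbation Theorem, see \cite{EN00}, p.~158,'' and the boundedness $\mb{L}'\in\mc{B}(\mc{H})$ is asserted just before the corollary with the parenthetical hint ``use Hardy's inequality,'' precisely as you outline. Your additional remarks on the crudeness of the resulting growth bound and the role of the subsequent spectral analysis are also in line with the paper's narrative.
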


\begin{proof}
This is a consequence of the Bounded Perturbation Theorem, see \cite{EN00}, p.~158.
\end{proof}

\subsection{Spectral analysis of the generator}

In order to improve the rough growth bound for the linearized evolution given in Corollary 
\ref{cor:WPlin}, we have to analyze the spectrum of $\mb{L}$.
Note first that the growth bound for the free evolution in Lemma \ref{lem:WPfree} implies
\begin{equation}
\label{eq:specL0}
\sigma(\mb{L}_0)\subset \{\lambda \in \mathbb{C}: \mathrm{Re}\lambda \leq -\tfrac32 \}, 
\end{equation}
see \cite{EN00}, p.~55, Theorem 1.10.
In fact, it is not very hard to see that we have equality here, i.e., the growth bound in
Lemma \ref{lem:WPfree} is sharp.
However, we will not need this result in the following and therefore we omit its proof.
Of course, the addition of the potential term $\mb{L}'$ changes the spectrum; but as a consequence of the 
following result, the change is in some sense the mildest possible: it only affects the point
spectrum.

\begin{lemma}
\label{lem:compact}
The operator $\mb{L}': \mc{H} \to \mc{H}$ is compact.
As a consequence, $\sigma(\mb{L})\backslash \sigma(\mb{L}_0) \subset \sigma_p(\mb{L})$.
\end{lemma}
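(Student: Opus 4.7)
The plan is to show compactness of $\mb{L}'$ by factoring it through a compact one-dimensional Rellich embedding, and then to deduce the spectral inclusion from the Fredholm alternative via the standard resolvent factorization. Because the second component of $\mb{L}'\mb{u}$ is zero and the first is $-V K^2 u_2$, only the $u_2$-slot of $\mb{u}$ enters, so the whole problem reduces to analyzing the scalar map $u_2 \mapsto D^2(V K^2 u_2)$.

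\textbf{Compactness.} First, from $u_2(0)=0$ and Cauchy--Schwarz one has $|u_2(\rho) - u_2(\sigma)| \leq |\rho - \sigma|^{1/2} \|\mb{u}\|_{\mc{H}}$, so the projection $\mb{u} \mapsto u_2$ sends bounded sets in $\mc{H}$ to uniformly bounded, equicontinuous families in $C[0,1]$. By Arzel\`a--Ascoli this map is compact into $C[0,1]$. It is then enough to show that $u_2 \mapsto D^2(V K^2 u_2)$ is bounded $C[0,1] \to L^2(0,1)$. I would compute $(V K^2 u_2)'$ and $(V K^2 u_2)''$ using the identities $(K^2 u_2)' = \rho K u_2$ and $(K^2 u_2)'' = K u_2 + \rho^2 u_2$, and assemble $D^2$. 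The resulting expression is a sum of three terms of schematic form $(\text{smooth})\cdot u_2$, $(\text{smooth})\cdot K u_2$ and $(\text{smooth})\cdot K^2 u_2$, provided the apparent $\rho^{-3}$ singularities in the coefficients cancel. This cancellation occurs because $V(\rho) = -144(5-\rho^2)/(5+3\rho^2)^2$ is an even smooth function, so $V'(\rho)/\rho$ and $(V''(\rho)\rho - V'(\rho))/\rho^3$ extend smoothly to $[0,1]$. Combined with the trivial bounds $K u_2 = O(\rho^2 \|u_2\|_\infty)$ and $K^2 u_2 = O(\rho^4 \|u_2\|_\infty)$, each summand is $L^\infty$-bounded by a constant multiple of $\|u_2\|_\infty$. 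Composition with the compact embedding from the first step then delivers compactness of $\mb{L}'$.

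\textbf{Spectral inclusion.} For $\lambda \notin \sigma(\mb{L}_0)$ one has the factorization
\[ \lambda - \mb{L} = (\lambda - \mb{L}_0)\bigl(I - R_{\mb{L}_0}(\lambda)\, \mb{L}'\bigr), \]
in which $R_{\mb{L}_0}(\lambda)\, \mb{L}'$ is compact, so $I - R_{\mb{L}_0}(\lambda)\mb{L}'$ is Fredholm of index zero. By the Fredholm alternative, $\lambda - \mb{L}$ fails to be invertible precisely when this factor has nontrivial kernel, and every element of that kernel is an eigenvector of $\mb{L}$ with eigenvalue $\lambda$. Therefore $\sigma(\mb{L}) \setminus \sigma(\mb{L}_0) \subset \sigma_p(\mb{L})$.

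The only real technical delicacy lies in the compactness step: naively $D^2(V K^2 u_2)$ looks too singular near $\rho=0$ to live in $L^2$, and the argument only survives because the evenness of $V$ conspires with the double smoothing from $K^2$ to kill the would-be $\rho^{-3}$ contributions. A weaker gain from either source would force a stronger input norm or a substantially finer analysis; everything else is essentially bookkeeping.
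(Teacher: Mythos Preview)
Your argument is correct, but the compactness step is considerably more laborious than the paper's. The paper factors $\mb{L}'$ as $(\text{multiplication by }V)\circ K^2$, observes that multiplication by $V$ is bounded $\mc H_1\to\mc H_1$ (Hardy), and then exploits the algebraic identity $D^2 K^2=\mathrm{id}$ to get $\|K^2 u\|_1=\|u\|_{L^2(0,1)}$ exactly; compactness of $K^2:\mc H_2\to\mc H_1$ then drops out of the Rellich embedding $H^1(0,1)\hookrightarrow L^2(0,1)$ with no explicit computation of $D^2(VK^2u_2)$ and no appeal to the evenness of $V$. Your route---Arzel\`a--Ascoli on $u_2$ followed by a direct estimate of $D^2(VK^2u_2)$ in $L^2$---works, but the ``real technical delicacy'' you flag is an artifact of not separating $V$ from $K^2$: once you do, the cancellation is automatic. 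For the spectral inclusion the paper uses the mirror factorization $\lambda-\mb L=[1-\mb L' R_{\mb L_0}(\lambda)](\lambda-\mb L_0)$ and Riesz--Schauder; your right-hand version $(\lambda-\mb L_0)(I-R_{\mb L_0}(\lambda)\mb L')$ with the Fredholm alternative is equivalent.
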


\begin{proof}
We write $\mc{H}=\mc{H}_1 \times \mc{H}_2$ and denote by $\|\cdot\|_j$, $j=1,2$, the respective norms on 
$\mc{H}_j$.
Since multiplication by $V$ is bounded as an operator from $\mc{H}_1$ to $\mc{H}_1$ (Hardy's inequality), 
it suffices to show that $K^2$ is compact as an operator from $\mc{H}_2$ to $\mc{H}_1$.
Let $(u_j) \subset \mc{H}_2$ be a bounded sequence.
By definition of $\|\cdot\|_2$ and the boundary condition $u_j(0)=0$ 
it follows that $(u_j) \subset H^1(0,1)$ is bounded and the compact embedding
$H^1(0,1) \subset \subset L^2(0,1)$ implies that $(u_j)$ has a subsequence which converges
in $L^2(0,1)$.
Since $\|K^2 u_j\|_1=\|u_j\|_{L^2(0,1)}$ we conclude that $(K^2 u_j)$ has a convergent
subsequence in $\mc{H}_1$ which implies the compactness of $K^2$.

If $\lambda \in \sigma(\mb{L})\backslash \sigma(\mb{L}_0)$ then it follows from the identity
$\lambda-\mb{L}=[1-\mb{L}'\mb{R}_{\mb{L}_0}(\lambda)](\lambda-\mb{L}_0)$ 
and the spectral theorem for compact operators
(Riesz--Schauder theory, see e.g.~ \cite{W00}, Section 5.4) that $\lambda \in \sigma_p(\mb{L})$.
\end{proof}

As we will show now, Lemma \ref{lem:compact} provides the link between the spectral problem for
$\mb{L}$ and the mode stability ODE \eqref{eq:modstab}.

\begin{lemma}
\label{lem:specmodstab}
If $\lambda \in \sigma(\mb{L})$ and $\mathrm{Re}\lambda>-\frac32$ 
then there exists a nontrivial $u \in C^\infty[0,1]$
such that
\begin{equation}
\label{eq:modstab2}
-(1-\rho^2)\left [u''+\tfrac{2}{\rho}u' \right ]+2\lambda \rho u'
+\lambda(\lambda+1)u+\frac{3F'(W_0(\rho)-1)}{\rho^2}u=0. 
\end{equation}
\end{lemma}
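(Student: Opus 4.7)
My plan is to combine the compactness result of Lemma \ref{lem:compact} with a Frobenius-type endpoint analysis of the mode stability ODE. Since $\sigma(\mb{L}_0) \subset \{\Re\mu \le -\tfrac32\}$ by \eqref{eq:specL0}, the hypothesis $\Re\lambda > -\tfrac32$ places $\lambda \in \sigma(\mb{L}) \setminus \sigma(\mb{L}_0)$, and Lemma \ref{lem:compact} then identifies $\lambda$ as an eigenvalue of $\mb{L}$. Pick a nontrivial eigenfunction $\mb{u} = (u_1, u_2) \in \mc{D}(\mb{L})$. Motivated by the reconstruction $\varphi = \tfrac{1}{(T-t) r^3} K^2 \varphi_2$ from Section 2, I would set $u(\rho) := \rho^{-3} K^2 u_2(\rho)$, so that automatically $\mc{D}^2 u = u_2$. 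A short computation using the identities $K u_2 = \rho^2 u' + 3\rho u$ and $K^2 u_2 = \rho^3 u$ shows that the second component of $\mb{L}\mb{u} = \lambda\mb{u}$ forces $u_1 = \rho^3(\lambda u + \rho u')$, and substituting these into the first component and simplifying yields precisely \eqref{eq:modstab2}.

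For the smoothness claim, $u$ is real-analytic on $(0,1)$ by standard linear ODE theory, and it remains to verify regularity at the two regular singular endpoints of \eqref{eq:modstab2}. At $\rho = 0$ the indicial equation $\alpha^2 + \alpha - 6 = 0$ yields exponents $2$ and $-3$. From $u_2 \in H^1(0,1)$ with $u_2(0) = 0$ one has $u_2(\rho) = O(\rho^{1/2})$, so iterated integration forces $u(\rho) = O(\rho^{3/2})$, excluding the $\rho^{-3}$ Frobenius solution and placing $u$ in the analytic branch. At $\rho = 1$ the indicial exponents are $0$ and $1-\lambda$; crucially, the regularity $u_2 \in H^1$ near $\rho = 1$ promotes, via two applications of $K$ and division by the smooth nonvanishing $\rho^3$, to $u \in H^3$ near $\rho = 1$. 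The third derivative of $(1-\rho)^{1-\lambda}$ is of order $(1-\rho)^{-\lambda-2}$, whose $L^2$-integrability near $\rho = 1$ requires $\Re\lambda < -\tfrac32$; similarly, the logarithmic contributions that appear in the resonant cases $1-\lambda \in \{0,1,2,\ldots\}$ produce third derivatives of size $(1-\rho)^{-k}$ for some positive integer $k$, again not in $L^2$. Thus the hypothesis $\Re\lambda > -\tfrac32$ forces the nonsmooth component to vanish and $u$ to be analytic at $\rho = 1$, so altogether $u \in C^\infty[0,1]$.

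Finally, for nontriviality: if $u \equiv 0$ then $u_2 = \mc{D}^2 u = 0$, and the first component of $\mb{L}\mb{u} = \lambda\mb{u}$ collapses to the first-order ODE $-\rho u_1' + (2-\lambda) u_1 = 0$ with solutions $u_1(\rho) = C\rho^{2-\lambda}$. A direct computation gives $D^2(\rho^{2-\lambda}) = -\lambda(2-\lambda)\rho^{-\lambda-2}$, which is in $L^2(0,1)$ only for $\Re\lambda < -\tfrac32$; the borderline case $\lambda = 0$ gives $u_1 = C\rho^2$ with vanishing $\mc{H}_1$-seminorm, hence $u_1 = 0$ in $\mc{H}$. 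In all cases this contradicts the nontriviality of $\mb{u}$, so $u \ne 0$. The principal technical obstacle here is the regularity analysis at the sonic point $\rho = 1$: one has to combine Frobenius theory with the $H^3$ regularity of $u$ inherited via integration from the Hilbert-space regularity of $u_2$ in order to exclude both the singular $(1-\rho)^{1-\lambda}$ Frobenius branch and the resonant logarithmic corrections.
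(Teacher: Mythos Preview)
Your argument is correct and follows essentially the same route as the paper: compactness gives an eigenfunction, $u := \rho^{-3}K^2 u_2$ satisfies the mode ODE, and Frobenius analysis at both endpoints combined with the $H^3$ regularity of $u$ near $\rho=1$ forces $u \in C^\infty[0,1]$. For nontriviality the paper takes a shorter path, reading off $u_1 = \int_0^\rho s^3 u_2\,ds + (\lambda-1)K^2 u_2$ directly from the second component (so $u_2=0 \Rightarrow u_1=0$); your first-component argument works too but should also cover $\lambda=2$, where $u_1 \equiv C$ is excluded by the boundary condition $u_1(0)=0$.
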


\begin{proof}
Let $\lambda \in \sigma(\mb{L})$ with $\mathrm{Re}\lambda>-\frac32$. According to Lemma \ref{lem:compact}
and Eq.~\eqref{eq:specL0} we have
$\lambda \in \sigma_p(\mb{L})$ and thus, there exists a nontrivial 
$\mb{u} \in \mc{D}(\mb{L}_0)\subset \mc{H}$ 
such that $(\lambda-\mb{L})\mb{u}=\mb{0}$.
Writing out the components we obtain the two equations
\begin{equation}
\label{eq:spec1st}
\left \{
\begin{array}{l}
\lambda u_1(\rho)+\rho u_1'(\rho)-2u_1(\rho)-\rho^2 u_2(\rho)+3 K u_2(\rho)+V(\rho)K^2 u_2(\rho)
=0 \\
\lambda u_2(\rho)-D^2 u_1(\rho)+\rho u_2'(\rho)+u_2(\rho)=0.
\end{array}
\right.
\end{equation}
The second equation implies
\begin{equation}
\label{eq:u1}
u_1(\rho)=\int_0^\rho s^3 u_2(s)ds+(\lambda-1)K^2 u_2(\rho) 
\end{equation}
which in particular shows that $u_2$ is nonzero.
In view of Eq.~\eqref{eq:varphi12} we set
\begin{equation}
\label{eq:u2}
u(\rho):=\tfrac{1}{\rho^3}K^2 u_2(\rho) 
\end{equation}
and note that $u \in C^2[0,1]$.
With this definition the expression for $u_1$ in Eq.~\eqref{eq:u1} simplifies to
\begin{equation}
\label{eq:u11}
u_1(\rho)=\rho^3 [\rho u'(\rho)+\lambda u(\rho)].
\end{equation}
Inserting Eqs.~\eqref{eq:u11}, \eqref{eq:u2} into the first equation of \eqref{eq:spec1st}
we infer
$$ -(1-\rho^2)\left [u''(\rho)+\tfrac{2}{\rho}u'(\rho) \right ]+2\lambda \rho u'(\rho)+\lambda(\lambda+1)
u(\rho)+\left [\tfrac{6}{\rho^2}+V(\rho)\right ]u(\rho)=0 $$
and by recalling the definition of $V$ in Eq.~\eqref{eq:V} we see that $u$ indeed 
satisfies Eq.~\eqref{eq:modstab2}.
Note that the coefficients in Eq.~\eqref{eq:modstab2} belong to $C^\infty(0,1)$ and 
furthermore, the coefficient of $u''$ does not vanish in $(0,1)$ which shows that the 
solution $u$ is in $C^\infty(0,1)$ by basic ODE theory.
The behavior at the endpoints follows by Frobenius' method:
at $\rho=0$ the Frobenius indices are $\{-3,2\}$ and therefore, $u \in C^2[0,1]$ already implies 
$u \in C^\infty[0,1)$
with $u(\rho)=O(\rho^2)$ as $\rho \to 0+$.
At $\rho=1$ we have the indices $\{0,1-\lambda\}$.
Note that $\mb{u} \in \mc{H}$ implies $u_2 \in H^1(0,1)$ and thus, $u \in H^3(\frac12,1)$.
Since $\mathrm{Re}(1-\lambda)<\frac52$ by assumption, the condition $u \in H^3(\frac12,1)$ 
excludes \footnote{Strictly speaking, the cases $\lambda \in \{-1,0,1\}$ require special attention since
for these values of $\lambda$ there exist two possibilities: the nonsmooth solution involves a 
logarithmic term or all solutions are smooth at $\rho=1$. In either case, however, we arrive at the same
conclusion as for $\lambda \notin \{-1,0,1\}$.} 
the nonsmooth solution at $\rho=1$ and we obtain $u \in C^\infty[0,1]$
as claimed. 
\end{proof}

\subsection{Construction of the spectral projection}

As already mentioned in the introduction, the function $g(\rho):=\rho W_0'(\rho)$ solves the mode 
stability ODE \eqref{eq:modstab2} with $\lambda=1$.
Via the transformations in the proof of Lemma \ref{lem:specmodstab} (in particular
Eqs.~\eqref{eq:u1} and \eqref{eq:u2}), 
$g$ gives rise to a function
$\mb{g} \in \mc{D}(\tilde{\mb{L}}_0)$ which (after a convenient normalization) reads explicitly
\begin{align}
\label{eq:g}
\mb{g}(\rho)&=-\tfrac{1}{240} \left (
\rho^3[ \rho g'(\rho)+g(\rho)], 
\tfrac{1}{\rho} \partial_\rho[\tfrac{1}{\rho}\partial_\rho(\rho^3 g(\rho))]
\right ) \\
&=
\left (\frac{\rho^5(5-\rho^2)}{(5+3\rho^2)^3}, \frac{\rho(125-50\rho^2-3 \rho^4)}
{(5+3 \rho^2)^4} \right ) \nonumber
\end{align}
and satisfies $(1-\mb{L})\mb{g}=\mb{0}$.
Consequently, $1 \in \sigma_p(\mb{L})$ but, as already indicated in the introduction, this instability
is induced by the time translation symmetry of the Yang--Mills equation \eqref{eq:main}.
Our aim is to construct a suitable spectral projection that removes this symmetry mode.
As a preparation for this we need the following observation.

\begin{lemma}
\label{lem:isol}
The eigenvalue $1 \in \sigma_p(\mb{L})$ is isolated in the spectrum of $\mb{L}$ and its algebraic 
multiplicity is finite.
\end{lemma}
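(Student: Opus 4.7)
The plan is to apply the analytic Fredholm theorem on the open half--plane $H:=\{\lambda\in\mathbb C:\Re\lambda>-\tfrac32\}$. On $H$ the resolvent $\mb R_{\mb L_0}(\lambda)$ is well defined and operator--norm analytic by \eqref{eq:specL0}, and the factorization
\[ \lambda-\mb L=\bigl(I-\mb L'\mb R_{\mb L_0}(\lambda)\bigr)(\lambda-\mb L_0) \]
shows that, for $\lambda\in H$, we have $\lambda\in\sigma(\mb L)$ if and only if $T(\lambda):=I-\mb L'\mb R_{\mb L_0}(\lambda)$ fails to be boundedly invertible on $\mc H$. Since $\mb L'\in\mc B(\mc H)$ is compact by Lemma \ref{lem:compact}, the map $\lambda\mapsto \mb L'\mb R_{\mb L_0}(\lambda)$ is a holomorphic family of compact operators on $H$.

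For sufficiently large $\Re\lambda$ the Hille--Yosida--type estimate from Lemma \ref{lem:WPfree} gives $\|\mb R_{\mb L_0}(\lambda)\|\le(\Re\lambda+\tfrac32)^{-1}$, and hence $\|\mb L'\mb R_{\mb L_0}(\lambda)\|<1$. So $T(\lambda)^{-1}$ exists there by Neumann series, and the hypotheses of the analytic Fredholm theorem are met. The conclusion is that
\[ S:=\{\lambda\in H:T(\lambda)\text{ is not invertible}\}=\sigma(\mb L)\cap H \]
is a discrete subset of $H$, and that $T(\cdot)^{-1}$ admits a meromorphic continuation to $H$ whose principal part at each pole is a finite--rank operator. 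Since the explicit eigenfunction $\mb g$ from \eqref{eq:g} witnesses $1\in S$, we conclude that $1$ is isolated in $\sigma(\mb L)$.

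To bound the algebraic multiplicity, write $\mb R_{\mb L}(\lambda)=\mb R_{\mb L_0}(\lambda)\,T(\lambda)^{-1}$. Since $\mb R_{\mb L_0}(\lambda)$ is analytic near $\lambda=1$, the right--hand side is meromorphic there with finite--rank principal part, so the Riesz projection
\[ \mb P:=\frac{1}{2\pi i}\oint_{\gamma}\mb R_{\mb L}(\lambda)\,d\lambda, \]
taken along a small positively oriented circle $\gamma\subset H$ isolating $1$ from the rest of $\sigma(\mb L)$, has finite rank. Because $\rg\mb P$ coincides with the generalized eigenspace of $\mb L$ at $1$, this yields finite algebraic multiplicity.

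The main obstacle is essentially only the bookkeeping needed to feed the analytic Fredholm theorem: one must verify analyticity and compactness of $\mb L'\mb R_{\mb L_0}(\lambda)$ on all of $H$ and exhibit one point where $T(\lambda)$ is invertible. Both ingredients have already been supplied by Lemmas \ref{lem:WPfree} and \ref{lem:compact}, so the remainder of the argument is standard Riesz--Schauder perturbation theory for generators of $C_0$--semigroups.
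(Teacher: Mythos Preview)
Your proof is correct and takes a genuinely different route from the paper's. The paper establishes isolation by invoking Lemma~\ref{lem:specmodstab}: every $\lambda\in\sigma(\mb L)$ with $\Re\lambda>-\tfrac32$ yields a smooth (in fact analytic) solution of the mode stability ODE \eqref{eq:modstab2}, and such $\lambda$ are therefore zeros of the Wronskian of the local analytic solutions at $\rho=0$ and $\rho=1$; since this Wronskian is analytic in $\lambda$, its zeros are isolated. For finite algebraic multiplicity the paper argues by contradiction via a theorem of Kato that an eigenvalue of infinite algebraic multiplicity must lie in the essential spectrum, and then uses stability of the essential spectrum under the compact perturbation $\mb L'$ together with $1\notin\sigma(\mb L_0)$.

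Your approach, by contrast, handles both claims in one stroke through the analytic Fredholm alternative applied to $T(\lambda)=I-\mb L'\mb R_{\mb L_0}(\lambda)$. This is more streamlined and purely operator--theoretic: it needs neither the reduction to the ODE (and the attendant Frobenius analysis) nor the notion of essential spectrum. The paper's argument, on the other hand, makes explicit contact with the mode stability problem \eqref{eq:modstab2}, which is conceptually useful since that is the object on which the numerical verification of mode stability is actually carried out. One minor remark: your appeal to \eqref{eq:g} is a forward reference in the paper's ordering, but the fact that $1\in\sigma_p(\mb L)$ is already part of the lemma's hypothesis, so no circularity arises.
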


\begin{proof}
According to Lemma \ref{lem:specmodstab} each $\lambda \in \sigma(\mb{L})$ with 
$\mathrm{Re}\lambda>-\frac32$ gives rise to a nontrivial function $u \in C^\infty[0,1]$ that
satisfies Eq.~\eqref{eq:modstab2}.
In fact, inspection of the proof of Lemma \ref{lem:specmodstab} shows that $u$ is even 
analytic. 
Consequently, all $\lambda \in \sigma(\mb{L})$ with $\mathrm{Re}\lambda > -\frac32$ are zeros
of an analytic function (namely the Wronskian of the two analytic solutions of Eq.~\eqref{eq:modstab2}
around $\rho=0$ and $\rho=1$, respectively) and therefore they are isolated.
If the algebraic multiplicity of $1 \in \sigma_p(\mb{L})$ were infinite then, by
\cite{K80}, p.~239, Theorem 5.28, $1$ would belong to the essential spectrum \footnote{
There exist at least five nonequivalent notions of essential spectra for nonself--adjoint operators,
see \cite{EE87} for a detailed discussion.
We stick to the definition given by Kato \cite{K80} as the set of all $\lambda$ such that
$\lambda-\mb{L}$ fails to be semi--Fredholm.} of $\mb{L}$.
However, since the essential spectrum is stable under compact perturbations (\cite{K80},
p.~244, Theorem 5.35) and $1 \notin \sigma(\mb{L}_0)$, 
we conclude that the algebraic multiplicity must be finite.
\end{proof}

Lemma \ref{lem:isol} allows us to define the Riesz projection
\begin{equation} 
\label{eq:P}
\mb{P}:=\tfrac{1}{2\pi i}\int_\Gamma (\lambda-\mb{L})^{-1}d\lambda 
\end{equation}
where $\Gamma$ is a circle that lies entirely in $\rho(\mb{L})$ and encloses the eigenvalue $1$ 
in such a way that no other spectral points of $\mb{L}$ lie inside $\Gamma$. 
By definition, the algebraic multiplicity of $1 \in \sigma_p(\mb{L})$ equals 
$\dim \rg \mb{P}$ and thus, by 
Lemma \ref{lem:isol}, $\mb{P}$
is of finite rank.
Moreover, $\mb{P}$ commutes with $\mb{L}$ in the sense that $\mb{PL} \subset \mb{LP}$ 
and as a consequence, 
$\mb{P}$ also commutes with the semigroup generated by $\mb{L}$, i.e., 
$\mb{PS}(\tau)=\mb{S}(\tau)\mb{P}$ for any $\tau \geq 0$.
We set $\mc{M}:=\rg \mb{P}$ which is a finite--dimensional subspace of $\mc{H}$ and denote by 
$\mb{L}_\mc{M}:=\mb{L}|_{\mc{D}(\mb{L})\cap \mc{M}}$ the part of $\mb{L}$ in $\mc{M}$.
$\mb{L}_\mc{M}$ is a linear bounded
operator on the finite--dimensional Hilbert space $\mc{M}$ with $\sigma(\mb{L}_\mc{M})=\{1\}$.
We refer to \cite{K80} for these standard facts.

\begin{lemma}
\label{lem:algmult}
The subspace $\mc{M}=\rg \mb{P}$ is one--dimensional and spanned by the symmetry mode $\mb{g}$.
\end{lemma}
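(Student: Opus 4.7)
The plan is to establish two facts: first, that the geometric eigenspace $\ker(\mb{L}-1)$ is one--dimensional and spanned by $\mb{g}$; second, that $1$ is a semisimple eigenvalue, i.e., there are no generalized eigenvectors. Together these imply $\mc{M} = \ker(\mb{L}-1) = \mathrm{span}\{\mb{g}\}$, since $\mc{M}$ is finite--dimensional (Lemma \ref{lem:isol}) and $\sigma(\mb{L}_\mc{M}) = \{1\}$.

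For the first step I would apply Lemma \ref{lem:specmodstab} at $\lambda = 1$: every $\mb{u} \in \ker(\mb{L}-1)$ produces a smooth $u \in C^\infty[0,1]$ solving Eq.~\eqref{eq:modstab2}. At $\rho=0$ the Frobenius indices are $\{-3,2\}$, so smoothness already picks out a one--dimensional subspace of local solutions. At $\rho=1$ the indices are $\{0, 1-\lambda\} = \{0,0\}$, so the second fundamental system contains a logarithm; the smoothness requirement kills it, leaving again a one--dimensional space of admissible solutions. Combined with the fact that $g(\rho) = \rho W_0'(\rho)$ is one such solution, this forces $u$ to be a scalar multiple of $g$. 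Running the transformations \eqref{eq:u1}--\eqref{eq:u11} backwards then shows $\mb{u}$ is proportional to $\mb{g}$.

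For the second step, I would assume for contradiction that some $\mb{u} \in \mc{D}(\mb{L})$ satisfies $(\mb{L}-1)\mb{u} = c\,\mb{g}$ with $c \neq 0$. Writing out the two components as in \eqref{eq:spec1st} with a nonzero right--hand side derived from $\mb{g}$, the same reduction (solve for $u_1$ from the second equation, define $u := \tfrac{1}{\rho^3}K^2 u_2$) leads to an inhomogeneous version of Eq.~\eqref{eq:modstab2} at $\lambda=1$, namely
\[
-(1-\rho^2)\bigl[u'' + \tfrac{2}{\rho}u'\bigr] + 2\rho u' + 2u + \tfrac{3F'(W_0(\rho)-1)}{\rho^2} u = h(\rho),
\]
where $h$ is the smooth inhomogeneity computed explicitly from \eqref{eq:g}. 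Since $\mb{u} \in \mc{H}$ forces $u \in H^3(\tfrac12,1)$, any candidate $u$ must be regular at $\rho=1$. This is where the obstruction will appear: because $\rho=1$ is a regular singular point with a repeated Frobenius index $0$, variation of parameters based on the fundamental system $\{g, g\log(1-\rho) + \text{analytic}\}$ yields a particular solution of the form $A(\rho) + B(\rho)\log(1-\rho)$ with coefficient $B(1)$ determined by an integral $\int_0^1$--type resonance pairing of $h$ against $g$. Verifying that this resonance integral is nonzero (a direct computation with the explicit expressions for $g$, $W_0$, and $h$) forces a genuine logarithmic singularity, contradicting $u \in H^3(\tfrac12,1)$.

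The main obstacle is precisely this resonance computation at $\rho=1$: one has to identify the correct pairing (essentially an inner product of $h$ with the solution dual to $g$ with respect to the Wronskian) and evaluate it, showing it is nonzero. The endpoint $\rho=0$ is easy because the indices $\{-3,2\}$ are well separated and smoothness can always be arranged there. Once the nonvanishing resonance at $\rho=1$ is established, $\mb{g} \notin \rg(\mb{L}-1)$, hence $\ker(\mb{L}-1)^n = \ker(\mb{L}-1)$ for all $n$, and since $\mb{L}_\mc{M}$ has spectrum $\{1\}$ on the finite--dimensional $\mc{M}$ this yields $\mc{M} = \ker(\mb{L}-1) = \mathrm{span}\{\mb{g}\}$.
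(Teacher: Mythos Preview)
Your approach is essentially the same as the paper's: reduce to the inhomogeneous mode equation at $\lambda=1$ and exhibit the obstruction at $\rho=1$ via variation of constants against the explicit fundamental system. The paper carries this out explicitly and the ``resonance computation'' you flag as the main obstacle turns out to be trivial: the relevant condition is $\int_0^1 s^2 h_0(s)\tilde g(s)\,ds=0$ with $h_0(\rho)=\tfrac{\rho^2}{(5+3\rho^2)^2}$ (a scalar multiple of your $g$) and inhomogeneity $\tilde g(\rho)=\tfrac{\rho^2(35-3\rho^2)}{3(5+3\rho^2)^3}$, and the integrand is manifestly positive on $(0,1)$.
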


\begin{proof}
Note first that it follows from the proof of Lemma \ref{lem:specmodstab} that the geometric 
eigenspace of $1 \in \sigma_p(\mb{L})$ is one--dimensional and spanned by $\mb{g}$. 
Consequently, since $1 \in \sigma(\mb{L}_\mc{M})=\sigma_p(\mb{L}_\mc{M})$, we conclude that
$\mb{g} \in \mc{M}$ which shows $\langle \mb{g} \rangle \subset \mc{M}$.

In order to prove the reverse implication observe that $\sigma(1-\mb{L}_\mc{M})=\{0\}$ and thus, 
$1-\mb{L}_\mc{M}$ is nilpotent.
This means that there exists an $m \in \mathbb{N}$ such that $(1-\mb{L}_\mc{M})^m \mb{u}=\mb{0}$ for all
$\mb{u} \in \mc{M}$ and we assume that $m$ is minimal with this property.
If $m=1$ it follows that $\mc{M} \subset \ker(1-\mb{L}_\mc{M})=\langle \mb{g} \rangle$ and we are done.
Thus, assume $m \geq 2$.
Then there exists a nonzero $\mb{v} \in \rg(1-\mb{L}_\mc{M})$ such that $(1-\mb{L}_\mc{M})\mb{v}=0$.
In other words, $\mb{g} \in \rg(1-\mb{L}_\mc{M})$, i.e., there exists a $\mb{u} \in \mc{D}(\mb{L})$ 
such that
$(1-\mb{L})\mb{u}=\mb{g}$.
By a similar computation as in the proof of Lemma \ref{lem:specmodstab} we infer the equation 
\begin{align}
\label{eq:algmult}
-&(1-\rho^2)\left [u''(\rho)+\tfrac{2}{\rho}u'(\rho) \right ]+2\rho u'(\rho)
+2u(\rho)+\frac{3F'(W_0(\rho)-1)}{\rho^2}u(\rho) \\
&=\tfrac{1}{\rho^3}g_1(\rho)
+\tfrac{1}{\rho}Kg_2(\rho)-\tfrac{1}{\rho^3}K^2 g_2(\rho)=
\frac{\rho^2(35-3\rho^2)}{3(5+3 \rho^2)^3}=:\tilde{g}(\rho) \nonumber
\end{align}
for the function
$u(\rho):=\tfrac{1}{\rho^3}K^2 u_2(\rho)$.
The homogeneous version of Eq.~\eqref{eq:algmult} has the fundamental system $\{h_0,h_1\}$ where
$$h_0(\rho)=\tfrac{\rho^2}{(5+3\rho^2)^2}$$
and $h_1$ (which can also be given in closed form) behaves like $|h_1(\rho)| \simeq \frac{1}{\rho^3}$
as $\rho \to 0+$ and $|h_1(\rho)| \simeq |\log(1-\rho)|$ as $\rho \to 1-$.
After a suitable normalization of $h_1$ we obtain for the Wronskian of $h_0$ and $h_1$ the expression
$$ W(h_0,h_1)(\rho)=\tfrac{1}{\rho^2(1-\rho^2)} $$
and thus, according to the variation of constants formula, $u$ must be of the form
$$ u(\rho)=c_0 h_0(\rho)+c_1 h_1(\rho)+h_0(\rho)\int_{\rho_0}^\rho s^2 h_1(s)\tilde{g}(s)ds
-h_1(\rho)\int_{\rho_1}^\rho s^2 h_0(s)\tilde{g}(s)ds $$
for suitable constants $c_0,c_1 \in \mathbb{C}$ and $\rho_0, \rho_1 \in [0,1]$.
Since $\lim_{\rho \to 0+}u(\rho)$ exists, we must have $c_1=\int_{\rho_1}^0 s^2 h_0(s)\tilde{g}(s)ds$
and thus,
$$ u(\rho)=c_0 h_0(\rho)+h_0(\rho)\int_{\rho_0}^\rho s^2 h_1(s)\tilde{g}(s)ds-h_1(\rho)
\int_0^\rho s^2 h_0(s)\tilde{g}(s)ds. $$
Similarly, the existence of $\lim_{\rho \to 1-}u(\rho)$ yields $\int_0^1 s^2 h_0(s)\tilde{g}(s)ds=0$
since $h_1$ is in $L^1$ near $\rho=1$.
This, however, is impossible since $s^2 h_0(s)\tilde{g}(s)>0$ for $s\in (0,1)$.
Consequently, there cannot exist a $\mb{u} \in \mc{D}(\mb{L})$ such that $(1-\mb{L})\mb{u}=\mb{g}$ and we
must have $m=1$.
\end{proof}

\subsection{The linear time evolution restricted to the stable subspace}

As already mentioned several times, the unstable eigenvalue $1 \in \sigma_p(\mb{L})$ stems from a
symmetry mode and does not correspond to a ``real'' instability.
Consequently, we consider the linear time evolution on the \emph{stable subspace} $\mc{N}:=
\rg(1-\mb{P})=\ker \mb{P}$ where $\mb{P}$ is the spectral projection defined in Eq.~\eqref{eq:P}.
Our aim is to derive a decay estimate for the subspace semigroup $\mb{S}(\tau)|_\mc{N}$.
To this end it is useful to recall the definition of the spectral bound of a closed operator.

\begin{definition}
Let $A: \mc{D}(A)\subset X \to X$ be a closed operator on a Banach space $X$.
Then the spectral bound $s(A)$ is defined as
$$ s(A):=\sup\{\Re \lambda: \lambda \in \sigma(A)\}. $$
\end{definition}

As before, we denote by $\mb{L}_\mc{N}$ the part of $\mb{L}$ in $\mc{N}$ and recall that
$\sigma(\mb{L}_\mc{N})=\sigma(\mb{L})\backslash\{1\}$ (see \cite{K80}).
Note that according to numerics \cite{BC05} we have in fact $s(\mb{L}_\mc{N})\approx -0.59$ but we 
emphasize that this information is not needed for the linear theory we are currently developing. We
need a preparing result which is interested in its own right as it shows that there 
do not exist unstable eigenvalues far away from 
the real axis.
We remark that this statement does not depend on the special form of the potential $V$. It is
merely a consequence of the structure of the differential operator $\mb{L}$.

\begin{lemma}
\label{lem:nospec}
Set $H_a:=\{z \in \mathbb{C}: \Re z \geq a\}$, $a \in \mathbb{R}$.
For any $\varepsilon>0$ there exist constants $C_1, C_2>0$ such that 
$$ \|\mb{R}_{\mb{L}}(\lambda)\|\leq C_2 $$
for all $\lambda \in H_{-\frac32+\varepsilon}$ with $|\lambda|\geq C_1$.
In particular, $\mb{L}$ does not have unstable eigenvalues far away from the real axis.
\end{lemma}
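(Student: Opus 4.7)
The plan is to exploit the factorization
\[ \lambda - \mb{L} = \bigl(I - \mb{L}'\mb{R}_{\mb{L}_0}(\lambda)\bigr)(\lambda - \mb{L}_0), \]
valid for $\lambda \in \rho(\mb{L}_0)$. By the Hille--Yosida theorem together with Lemma \ref{lem:WPfree} one has $\|\mb{R}_{\mb{L}_0}(\lambda)\| \leq 1/(\mathrm{Re}\,\lambda + \tfrac{3}{2}) \leq 1/\varepsilon$ on the half--plane $H_{-\frac{3}{2}+\varepsilon}$. A Neumann series inversion of the first factor then gives $\|\mb{R}_{\mb{L}}(\lambda)\| \leq 2/\varepsilon$ as soon as $\|\mb{L}'\mb{R}_{\mb{L}_0}(\lambda)\| \leq \tfrac{1}{2}$, so that the entire task reduces to showing $\|\mb{L}'\mb{R}_{\mb{L}_0}(\lambda)\| \to 0$ as $|\lambda| \to \infty$ inside $H_{-\frac{3}{2}+\varepsilon}$.

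I would split the half--plane into two regimes. When $\mathrm{Re}\,\lambda$ is large, the sharper estimate $\|\mb{R}_{\mb{L}_0}(\lambda)\| \leq 1/(\mathrm{Re}\,\lambda+\tfrac{3}{2})$ combined with the boundedness of $\mb{L}'$ closes the case directly. The essential regime is therefore $\mathrm{Re}\,\lambda$ bounded and $|\mathrm{Im}\,\lambda| \to \infty$, and here I would use the compactness of $\mb{L}'$ from Lemma \ref{lem:compact}. Given $\delta>0$, approximate $\mb{L}'$ in operator norm by a finite--rank $F=\sum_{k=1}^N \langle\cdot,\mb{v}_k\rangle\mb{w}_k$ with $\|\mb{L}'-F\|<\delta\varepsilon/2$, so that $\|(\mb{L}'-F)\mb{R}_{\mb{L}_0}(\lambda)\|<\delta/2$ uniformly. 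From the bound
\[ \|F\mb{R}_{\mb{L}_0}(\lambda)\mb{f}\| \leq \sum_{k=1}^N \|\mb{R}_{\mb{L}_0}(\lambda)^*\mb{v}_k\|\,\|\mb{w}_k\|\,\|\mb{f}\|, \]
the matter is further reduced to proving $\|\mb{R}_{\mb{L}_0}(\lambda)^*\mb{v}\| \to 0$ as $|\mathrm{Im}\,\lambda|\to\infty$ for each fixed $\mb{v}\in\mc{H}$.

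For this last ingredient, since $\mc{H}$ is a Hilbert space (hence reflexive), the adjoint $\mb{S}_0(\tau)^*$ is a $C_0$--semigroup with generator $\mb{L}_0^*$ satisfying $\|\mb{S}_0(\tau)^*\|\leq e^{-3\tau/2}$. Consequently,
\[ \mb{R}_{\mb{L}_0}(\lambda)^*\mb{v} = \int_0^\infty e^{-\bar{\lambda}\tau}\mb{S}_0(\tau)^*\mb{v}\,d\tau, \]
and the integrand is a Bochner--integrable $\mc{H}$--valued function of $\tau$ dominated on $H_{-\frac{3}{2}+\varepsilon}$ by $e^{-\varepsilon\tau}\|\mb{v}\|$. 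A vector--valued Riemann--Lebesgue lemma, proved by density of simple functions in $L^1(\mathbb{R}_+;\mc{H})$, then yields the decay of the integral in $\mc{H}$--norm as $|\mathrm{Im}\,\lambda|\to\infty$, uniformly for $\mathrm{Re}\,\lambda$ in bounded subsets of $[-\tfrac{3}{2}+\varepsilon,\infty)$. Together with the preceding reduction this closes the argument.

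The main technical point to monitor is the uniformity of these convergences across the entire strip rather than at individual cross sections. This is guaranteed by the fact that the semigroup decay $e^{-3\tau/2}$ is strictly faster than the Bochner weight $e^{(3/2-\varepsilon)\tau}$ appearing in the Laplace representation, which permits a dominated--convergence style control. Stitching the bounded--$\mathrm{Re}\,\lambda$ regime (handled by compactness and Riemann--Lebesgue) with the large--$\mathrm{Re}\,\lambda$ regime (handled by the elementary resolvent estimate) yields the claimed uniform bound $\|\mb{R}_{\mb{L}}(\lambda)\|\leq C_2$ on $\{|\lambda|\geq C_1\}\cap H_{-\frac{3}{2}+\varepsilon}$.
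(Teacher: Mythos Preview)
Your argument is correct, but it follows a genuinely different route from the paper's. Both proofs start from the factorization $\lambda-\mb L=[I-\mb L'\mb R_{\mb L_0}(\lambda)](\lambda-\mb L_0)$ and reduce to showing $\|\mb L'\mb R_{\mb L_0}(\lambda)\|\to 0$ for large $|\lambda|$, but they diverge in how this decay is obtained. The paper exploits the \emph{algebraic structure} of the operators: from the second row of $(\lambda-\mb L_0)\mb u=\mb f$ one reads off the identity
\[
u_1(\rho)=\int_0^\rho s^3 u_2(s)\,ds+(\lambda-1)K^2 u_2(\rho)-K^2 f_2(\rho),
\]
and since $\mb L'$ acts precisely through $K^2$ on the second component, this yields the explicit quantitative bound $\|K^2[\mb R_{\mb L_0}(\lambda)\mb f]_2\|_1\lesssim |\lambda-1|^{-1}\|\mb f\|$. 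Your approach is instead a \emph{soft} one: you use only that $\mb L'$ is compact (hence norm--approximable by finite rank) and that $\mb S_0(\tau)^*$ is a $C_0$--semigroup on the reflexive space $\mc H$, so that a vector--valued Riemann--Lebesgue lemma gives $\|\mb R_{\mb L_0}(\lambda)^*\mb v\|\to 0$ along vertical strips.

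What each buys: the paper's computation is elementary, self--contained, and delivers an explicit $O(|\lambda|^{-1})$ rate, but it is tied to the particular block structure of $\mb L_0$ and the fact that $\mb L'$ factors through $K^2$. Your argument uses no such structure and would apply verbatim to any compact perturbation of any $C_0$--semigroup generator with a strict growth bound; the price is that the decay is only qualitative, and the uniformity of the Riemann--Lebesgue convergence over $\mathrm{Re}\,\lambda\in[-\tfrac32+\varepsilon,M]$ (which you correctly flag as the point to monitor) needs one more sentence---e.g.\ that $a\mapsto e^{-a\,\cdot}\mb S_0(\cdot)^*\mb v$ is continuous into $L^1(\R_+;\mc H)$, so its range over the compact interval is compact in $L^1$, whence the Fourier decay is uniform.
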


\begin{proof}
Let $\lambda \in H_{-\frac32+\varepsilon}$ for a fixed but arbitrary $\varepsilon>0$.
The identity $\lambda-\mb{L}=[1-\mb{L}'\mb{R}_{\mb{L}_0}(\lambda)](\lambda-\mb{L}_0)$ 
shows that $\lambda-\mb{L}$ is invertible
if and only if $1-\mb{L}'\mb{R}_{\mb{L}_0}(\lambda)$ is invertible.
Thus, we have to estimate 
$$ \mb{L}'\mb{R}_{\mb{L}_0}(\lambda)\mb{f}=\left (
\begin{array}{c}
-VK^2 [\mb{R}_{\mb{L}_0}(\lambda)\mb{f}]_2 \\ 0 \end{array} \right ) $$
for $\mb{f} \in \mc{H}$.
We write $\mb{u}=\mb{R}_{\mb{L}_0}(\lambda)\mb{f}$ and thus, 
$(\lambda-\mb{L}_0)\mb{u}=\mb{f}$. The second component
of this equation implies
$$ u_1(\rho)=\int_0^\rho s^3 u_2(s)ds+(\lambda-1)K^2 u_2(\rho)-K^2 f_2(\rho) $$
or, in other words,
$$ [\mb{R}_{\mb{L}_0}(\lambda)\mb{f}]_1(\rho)=\int_0^\rho s^3 
[\mb{R}_{\mb{L}_0}(\lambda)\mb{f}]_2(s)ds+(\lambda-1)
K^2[\mb{R}_{\mb{L}_0}(\lambda)\mb{f}]_2(\rho)-K^2 f_2(\rho). $$
From this we obtain the estimate
$$ \|K^2[\mb{R}_{\mb{L}_0}(\lambda)\mb{f}]_2\|_1\lesssim \tfrac{1}{|\lambda-1|}\|\mb{f}\| $$
by noting that $\|\mb{R}_{\mb{L}_0}(\lambda)\|\leq \frac{1}{\Re\lambda +\frac32}$ 
(Lemma \ref{lem:WPfree} and \cite{EN00}, p.~55, Theorem 1.10) where $\|\cdot\|_j$, $j=1,2$, denotes 
the norm on $\mc{H}_j$.
As a consequence, if $|\lambda|$ is sufficiently large, the Neumann series
$$ [1-\mb{L}'\mb{R}_{\mb{L}_0}(\lambda)]^{-1}=\sum_{k=0}^\infty [\mb{L}'\mb{R}_{\mb{L}_0}(\lambda)]^k $$
converges in norm and the claim follows.
\end{proof}

To conclude the linear perturbation theory, we estimate the linear evolution on the stable
subspace depending on the spectral bound of its generator.

\begin{proposition}
\label{prop:linstab}
Let $\varepsilon>0$ and set $\omega:=\max\{-\frac32, s(\mb{L}_\mc{N})\}+\varepsilon$.
Then there exists a constant $C_\varepsilon>0$ such that the semigroup
$\mb{S}(\tau)$ given in Corollary \ref{cor:WPlin} satisfies the estimate
$$ \|\mb{S}(\tau)(1-\mb{P})\|\leq C_\varepsilon e^{\omega \tau} $$
for all $\tau \geq 0$ where $\mb{P}$ is the spectral projection defined in Eq.~\eqref{eq:P}.
\end{proposition}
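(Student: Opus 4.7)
The plan is to invoke the Gearhart--Pr\"uss theorem (see e.g.~\cite{EN00}), which on a Hilbert space identifies the growth bound of a $C_0$--semigroup with the infimum of those $\omega \in \mathbb{R}$ for which $H_\omega$ lies in the resolvent set of the generator and the resolvent is uniformly bounded on $H_\omega$. I would apply it to the subspace semigroup $\mb{S}(\tau)|_{\mc{N}}$ on the closed Hilbert subspace $\mc{N} = \ker \mb{P}$, which is generated by the part $\mb{L}_\mc{N}$ of $\mb{L}$ in $\mc{N}$ and whose spectrum equals $\sigma(\mb{L}) \setminus \{1\}$. Since $\mb{P}$ is bounded and $\mb{S}(\tau)(1-\mb{P}) = \mb{S}(\tau)|_{\mc{N}}\,(1-\mb{P})$, it is enough to establish the analogous exponential decay for the restricted semigroup.

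Fix an auxiliary exponent $\omega' := \max\{-\tfrac{3}{2}, s(\mb{L}_\mc{N})\} + \varepsilon/2 < \omega$; by the definition of the spectral bound, $H_{\omega'} \subset \rho(\mb{L}_\mc{N})$. To verify the resolvent bound demanded by Gearhart--Pr\"uss, I would split $H_{\omega'}$ into the compact set $K := \{\lambda \in H_{\omega'} : |\lambda| \le C_1\}$ and its unbounded complement, where $C_1$ is the constant furnished by Lemma \ref{lem:nospec} with $\varepsilon/2$ in place of $\varepsilon$. On $H_{\omega'} \setminus K$, Lemma \ref{lem:nospec} yields $\|\mb{R}_{\mb{L}}(\lambda)\| \le C_2$; since the Riesz projection $\mb{P}$ commutes with $\mb{R}_{\mb{L}}(\lambda)$ for every $\lambda \in \rho(\mb{L})$, the subspace $\mc{N}$ is invariant under $\mb{R}_{\mb{L}}(\lambda)$ and the identity $\mb{R}_{\mb{L}_\mc{N}}(\lambda) = \mb{R}_{\mb{L}}(\lambda)|_\mc{N}$ transfers the estimate to $\mb{L}_\mc{N}$. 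On the compact set $K$ every point lies in $\rho(\mb{L}_\mc{N})$---crucially, the symmetry eigenvalue $\lambda = 1$ might sit inside $K$ but has been removed from the spectrum by restricting to $\mc{N}$---and $\lambda \mapsto \mb{R}_{\mb{L}_\mc{N}}(\lambda)$ is continuous on the open set $\rho(\mb{L}_\mc{N})$, hence bounded on $K$ by compactness.

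Combining both contributions produces $\sup_{\lambda \in H_{\omega'}} \|\mb{R}_{\mb{L}_\mc{N}}(\lambda)\| < \infty$. Gearhart--Pr\"uss then shows that the growth bound of $\mb{S}(\tau)|_{\mc{N}}$ does not exceed $\omega'$, so for the slightly larger $\omega > \omega'$ there is a constant $C_\varepsilon > 0$ with $\|\mb{S}(\tau)|_{\mc{N}}\| \le C_\varepsilon e^{\omega \tau}$ for all $\tau \ge 0$, and composition with the bounded operator $1-\mb{P}$ delivers the proposition. I anticipate no serious obstacle: the substantive input is Lemma \ref{lem:nospec}, which is already in hand, while the rest is a routine combination of Riesz projection calculus with the Gearhart--Pr\"uss theorem on the Hilbert space $\mc{N}$. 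The only delicate bookkeeping is to confirm that the restricted resolvent really inherits the bound from $\mb{R}_{\mb{L}}$, which is guaranteed by the commutation of $\mb{P}$ with $\mb{L}$ and the invariance of $\mc{N}$.
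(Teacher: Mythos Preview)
Your proposal is correct and follows essentially the same route as the paper: apply the Gearhart--Pr\"uss(--Greiner) theorem to the subspace semigroup on $\mc{N}$, using Lemma~\ref{lem:nospec} to secure the uniform resolvent bound on the relevant half--plane. Your argument is in fact more explicit than the paper's, which compresses the compact/unbounded splitting and the handling of the removed eigenvalue $\lambda=1$ into a single sentence.
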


\begin{proof}
The operator $\mb{L}_\mc{N}$ is the generator of the subspace semigroup $\mb{S}(\tau)|_\mc{N}=
\mb{S}(\tau)(1-\mb{P})$ and its resolvent is given by $\mb{R}_\mb{L}(\lambda)|_\mc{N}$.
Consequently, the claimed estimate for $\mb{S}(\tau)(1-\mb{P})$ follows from the uniform boundedness of 
$\mb{R}_\mb{L}(\lambda)$ in the half--space $H_{\omega}$ (Lemma \ref{lem:nospec}) and
the Gearhart--Pr\"uss--Greiner Theorem (\cite{EN00},
p.~302, Theorem 1.11).
\end{proof}

We remark that if the self--similar solution $\psi^T$ is mode stable (cf.~Definition \ref{def:modstab})
then Lemma \ref{lem:specmodstab} implies $s(\mb{L}_\mc{N})<0$.
Hence, Proposition \ref{prop:linstab} shows that mode stability of $\psi^T$ 
implies linear stability. 
The numerically obtained value $s(\mb{L}_\mc{N})\approx -0.5889$ \cite{BC05} yields
the exponential decay
$$ \|\mb{S}(\tau)(1-\mb{P})\|\lesssim e^{-0.58 \,\tau},\quad \tau\geq 0 $$
for the linearized time evolution of perturbations of $\psi^T$.

\section{Nonlinear perturbation theory}
Based on Section \ref{sec:lin} we are now ready to treat the full system Eq.~\eqref{eq:phi1st}.
\emph{From now on we assume that $s(\mb{L}_\mc{N})<0$, i.e., that $\psi^T$ is mode stable.}
Proposition \ref{prop:linstab} then shows that the linearized time evolution on the stable 
subspace decays exponentially.
This puts us in an extremely convenient position since normally, at least in the 
study of wave equations,
one can at most hope for polynomial decay due to the continuous spectrum of the Laplacian.
In fact, as is well-known from dynamical systems theory, exponential decay of the linearization
carries over to the nonlinear evolution via Duhamel's formula.
In the PDE context one is of course faced with the additional complication that one needs good
mapping properties of the nonlinearity with respect to the spaces defined by the linear problem.
However, the norm we are using controls three derivatives and we are dealing with
a 5-dimensional problem where $\frac52+$ derivatives already suffice for a Moser estimate.
It is thus not surprising that we are able to obtain a Lipschitz property of the 
nonlinearity which is necessary to run a fixed point argument.
However, the presence of the symmetry mode $\mb{g}$ renders the linear evolution unstable and 
we have to overcome this by restricting ourselves to special initial data that live on a codimension
one ``manifold''. 
In a second step we then remove this restriction by adjusting the blowup time $T$.
At this point the role of the symmetry mode and its connection to the time translation 
invariance of the problem become evident.
As a matter of fact, the symmetry mode $\mb{g}$ corresponds to 
the derivative at $T=1$ of the curve $T \mapsto (\psi^T(0,\cdot), \psi_t^T(0,\cdot))$ in the 
space of initial data.

\subsection{Estimates for the nonlinearity}
As in the proof of Lemma \ref{lem:compact} we write $\mc H=\mc H_1 \times \mc H_2$ and denote
by $\|\cdot\|_j$ the respective norm on $\mc{H}_j$, $j=1,2$.
As a reminder we recall that
\[ \|u\|_1^2=\int_0^1 |D^2 u(\rho)|^2d\rho,\quad \|u\|_2^2=\int_0^1 |u'(\rho)|^2 d\rho \]
where $Du(\rho)=\frac{1}{\rho}u'(\rho)$.
Furthermore, from now on we assume all functions to be real--valued.
The nonlinear term in Eq.~\eqref{eq:phi1st} reads $\rho N_T(\frac{1}{\rho^3}K^2 \phi_2)$. 
In fact, since we switched to similarity coordinates, we have by Eq.~\eqref{eq:nl},
\[ N_T(u)(\rho)=9W_0(\rho)u(\rho)^2+3u(\rho)^3 \]
and thus, the nonlinear term is independent of $T$.
For the following it is useful to separate the functional dependence more clearly and we therefore
define
\begin{equation}
\label{eq:N}
\tilde{N}(x,\rho):=9W_0(\rho)x^2+3x^3.
\end{equation}
Since the nonlinear term occurs in the first component of Eq.~\eqref{eq:phi1st} and takes an argument
from the second component, we have to study its mapping properties as a map from $\mc H_2$ to
$\mc H_1$.
We start by defining an auxiliary operator
\[ Au(\rho):=\frac{1}{\rho^3}K^2 u(\rho) \]
which represents the argument of the nonlinearity where, as always, $Ku(\rho)=\int_0^\rho su(s)ds$.

\begin{lemma}
\label{lem:A}
We have the bounds 
\begin{align*} 
\|(\cdot)^{-2}Au\|_{L^2(0,1)}&\lesssim \|u\|_2 \\
\|(\cdot)^{-\frac32}Au\|_{L^\infty(0,1)}&\lesssim \|u\|_2 
\end{align*}
for all $u\in\mc H_2$.
\end{lemma}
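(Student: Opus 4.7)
The plan is to read off $Au(\rho)=\rho^{-3}K^2u(\rho)$ and exploit the boundary condition $u(0)=0$ (guaranteed for $u \in \mc H_2$ by Lemma \ref{lem:basicH}) together with the identities $(Ku)'(\rho)=\rho u(\rho)$ and $(K^2u)'(\rho)=\rho Ku(\rho)$. These identities are exactly what make Hardy's inequality (Lemma \ref{lem:Hardy}) applicable with each application absorbing one factor of $\rho^{-4}$ from the weight.

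For the $L^\infty$ bound I would proceed directly by iterating a pointwise estimate. The condition $u(0)=0$ and Cauchy--Schwarz yield $|u(s)| \le s^{1/2}\|u'\|_{L^2(0,s)}\le s^{1/2}\|u\|_2$, whence
\[
|Ku(\rho)| \le \int_0^\rho s\cdot s^{1/2}\|u\|_2\,ds \lesssim \rho^{5/2}\|u\|_2,
\qquad
|K^2u(\rho)| \le \int_0^\rho s|Ku(s)|\,ds \lesssim \rho^{9/2}\|u\|_2.
\]
Dividing by $\rho^3$ produces $|Au(\rho)|\lesssim \rho^{3/2}\|u\|_2$, which is the second bound.

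For the $L^2$ bound I would write $\|(\cdot)^{-2}Au\|_{L^2(0,1)}^2=\int_0^1 \rho^{-10}|K^2u(\rho)|^2\,d\rho$ and chain three applications of Lemma \ref{lem:Hardy}. First, with $\alpha=10$ applied to $K^2u$, the right-hand side becomes $(2/9)^2\int_0^1 \rho^{-8}|(K^2u)'|^2\,d\rho=(2/9)^2\int_0^1 \rho^{-6}|Ku|^2\,d\rho$. Next, with $\alpha=6$ applied to $Ku$, this is controlled by $(2/5)^2\int_0^1\rho^{-2}|u|^2\,d\rho$. Finally, Hardy with $\alpha=2$ applied to $u$ itself bounds this by $4\|u'\|_{L^2(0,1)}^2=4\|u\|_2^2$.

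The only non-routine step is verifying the vanishing condition $\lim_{\rho\to 0+}|w(\rho)|^2/\rho^{\alpha-1}=0$ required by Lemma \ref{lem:Hardy} at each stage. This is where the sharper decay $|u(s)|=o(s^{1/2})$ (a consequence of $\|u'\|_{L^2(0,s)}\to 0$ as $s\to 0$, which in turn follows from $u'\in L^2$ together with $u(0)=0$) does the work, propagating to $|Ku|=o(\rho^{5/2})$ and $|K^2u|=o(\rho^{9/2})$; these are precisely the little-o improvements needed to pass the three hypothesis checks for $\alpha=2,6,10$, respectively. I expect this bookkeeping at the origin to be the main (and only) technical point; the rest is just chaining the Hardy inequalities.
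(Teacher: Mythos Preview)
Your proof is correct and follows essentially the same route as the paper: the $L^2$ bound is obtained by chaining three applications of Hardy's inequality to pass from $\int \rho^{-10}|K^2u|^2$ down to $\int |u'|^2$, and the $L^\infty$ bound is a direct pointwise estimate using Cauchy--Schwarz. The one notable difference is that the paper sidesteps the verification of the vanishing hypothesis in Lemma~\ref{lem:Hardy} by first invoking the density statement of Lemma~\ref{lem:basicH} to reduce to $u\in C_c^\infty(0,1]$, for which all boundary conditions at $\rho=0$ are trivial; you instead work directly with $u\in\mc H_2$ and supply the little-$o$ improvements $|u|=o(\rho^{1/2})$, $|Ku|=o(\rho^{5/2})$, $|K^2u|=o(\rho^{9/2})$ by hand. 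Both are fine; the density shortcut is slightly cleaner, while your direct verification makes the mechanism more transparent.
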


\begin{proof}
By Lemma \ref{lem:basicH} we may assume $u\in C_c^\infty(0,1]$ and thus,
\[ \|(\cdot)^{-2}Au\|_{L^2(0,1)}^2=\int_0^1 \frac{|Au(\rho)|^2}{\rho^4}d\rho=\int_0^1 \frac{|K^2 u(\rho)|^2}{\rho^{10}}d\rho 
\lesssim \int_0^1 |u'(\rho)|^2 d\rho \]
by repeated application of Hardy's inequality (Lemma \ref{lem:Hardy}). For the second bound we note that
\begin{align*}
|Au(\rho)|&\leq\frac{1}{\rho^3}\int_0^\rho s \int_0^s t |u(t)|dt ds \\
&\leq \frac{1}{\rho^3}\int_0^\rho s \left (\int_0^s t^4 dt\right )^{1/2}
\left (\int_0^s \frac{|u(t)|^2}{t^2}dt \right )^{1/2}ds \\
&\lesssim \rho^\frac32 \|u'\|_{L^2(0,1)}
\end{align*}
again by Hardy's inequality.
\end{proof}

We provide similar bounds for the derivatives of $A$.

\begin{lemma}
\label{lem:DA}
We have the bounds
\begin{align*}
\|D Au\|_{L^2(0,1)}&\lesssim \|u\|_2 \\
\|(\cdot)^2 D^2 Au \|_{L^2(0,1)}&\lesssim \|u\|_2 \\
\|(\cdot)^\frac12 DAu\|_{L^\infty(0,1)}&\lesssim \|u\|_2 \\
\|(\cdot)^\frac52 D^2 Au\|_{L^\infty(0,1)}&\lesssim \|u\|_2
\end{align*}
for all $u\in \mc H_2$.
\end{lemma}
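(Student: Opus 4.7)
The plan is to differentiate $Au(\rho)=\rho^{-3}K^2u(\rho)$ twice explicitly so that $DAu$ and $D^2Au$ become finite linear combinations of weighted terms of the form $\rho^{-k}K^ju$, and then dispose of each term by iterated Hardy (Lemma~\ref{lem:Hardy}) for the $L^2$ bounds and by the pointwise Cauchy--Schwarz argument from the proof of Lemma~\ref{lem:A} for the $L^\infty$ bounds. Throughout I would, as in the previous lemma, reduce to $u\in C_c^\infty(0,1]$ via the density statement in Lemma~\ref{lem:basicH}, so no boundary terms appear in the integrations by parts.

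The required closed-form expressions follow from $(Ku)'(\rho)=\rho u(\rho)$ together with $Dv(\rho)=\rho^{-1}v'(\rho)$ and read
\[ DAu(\rho)=-3\rho^{-5}K^2u(\rho)+\rho^{-3}Ku(\rho), \]
\[ D^2Au(\rho)=15\rho^{-7}K^2u(\rho)-6\rho^{-5}Ku(\rho)+\rho^{-3}u(\rho). \]
Hence the four claims reduce to estimates on the basic quantities $\rho^{-5}K^2u$ and $\rho^{-3}Ku$ in $L^2$, together with $\rho^2\cdot\rho^{-7}K^2u=\rho^{-5}K^2u$, $\rho^2\cdot\rho^{-5}Ku=\rho^{-3}Ku$, $\rho^2\cdot\rho^{-3}u=\rho^{-1}u$ in $L^2$ for the second bound, and analogous pointwise estimates with weights $\rho^{1/2}$ and $\rho^{5/2}$ for the $L^\infty$ bounds.

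For the $L^2$ part I would apply Lemma~\ref{lem:Hardy} iteratively: each application trades a weight $\rho^{-\alpha}$ for $\rho^{-(\alpha-2)}$ at the cost of replacing $K^ju$ by $(K^ju)'=\rho K^{j-1}u$. The vanishing hypothesis is satisfied at every step because $u(\rho)=o(\rho^{1/2})$, $Ku(\rho)=O(\rho^2)$ and $K^2u(\rho)=O(\rho^4)$ near the origin. Iterating until only $u$ is left, every term reduces to $\int_0^1 \rho^{-2}|u(\rho)|^2\,d\rho\lesssim \|u'\|_{L^2(0,1)}^2=\|u\|_2^2$. For the $L^\infty$ part, the pointwise Cauchy--Schwarz bounds $|u(\rho)|\lesssim \rho^{1/2}\|u\|_2$, $|Ku(\rho)|\lesssim \rho^{5/2}\|u\|_2$ and $|K^2u(\rho)|\lesssim \rho^{9/2}\|u\|_2$ are obtained exactly as in Lemma~\ref{lem:A}, and when substituted into the closed-form expressions for $DAu$ and $D^2Au$ they yield precisely the weights $\rho^{-1/2}$ and $\rho^{-5/2}$ asked for. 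There is no conceptual obstacle; the most singular contribution is $15\rho^{-7}K^2u$ in $D^2Au$, but its $\rho^4$ vanishing at the origin exactly compensates the weight, so the whole lemma amounts to a careful bookkeeping exercise once the explicit formulas are in place.
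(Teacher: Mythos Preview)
Your proposal is correct and follows precisely the approach the paper indicates: the paper's own proof is a one-sentence sketch (``straightforward applications of Hardy's inequality, the logic being\ldots that each application of $D$ loses two powers of $\rho$''), and your explicit computation of $DAu$ and $D^2Au$ followed by iterated Hardy for $L^2$ and pointwise Cauchy--Schwarz for $L^\infty$ is exactly the intended expansion of that sketch. One minor remark: once you have reduced to $u\in C_c^\infty(0,1]$, all the vanishing hypotheses for Lemma~\ref{lem:Hardy} are trivially satisfied (everything is identically zero near $\rho=0$), so the separate orders $o(\rho^{1/2})$, $O(\rho^2)$, $O(\rho^4)$ you list are not actually needed.
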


\begin{proof}
The proof consists of straightforward applications of Hardy's inequality, the logic
being, of course, that each application of $D$ loses two powers of $\rho$.
\end{proof}

Now we define $N(u)(\rho):=\rho\tilde{N}(Au(\rho),\rho)$ which corresponds to the nonlinearity
in Eq.~\eqref{eq:phi1st}.
We have the following crucial result which is key to control the nonlinearity.

\begin{lemma}
\label{lem:N}
The function $N$ maps $\mc{H}_2$ to $\mc{H}_1$ and we have the bound
\[ \|N(u)-N(v)\|_1\lesssim (\|u\|_2+\|v\|_2)\|u-v\|_2 \]
for all $u,v \in \mc B_2$, the open unit ball in $\mc H_2$.
Furthermore, $N(0)=0$ and $N$ is Fr\'echet differentiable at $0$ with derivative
$D^F N(0)=0$.
\end{lemma}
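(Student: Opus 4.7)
The plan is to deduce the lemma from the pointwise and weighted $L^2$ bounds on $A$, $DA$, $D^2A$ provided by Lemmas \ref{lem:A} and \ref{lem:DA}, together with the Leibniz identities $D(fg)=(Df)g+f(Dg)$ and $D^2(fg)=(D^2f)g+2(Df)(Dg)+f(D^2g)$ for $D=\rho^{-1}\partial_\rho$, which follow directly from the product rule. The trivial assertion $N(0)=0$ is read off from the formula $N(u)(\rho) = \rho\bigl[9W_0(\rho)(Au(\rho))^2 + 3(Au(\rho))^3\bigr]$.

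For the Lipschitz estimate I would first factor
\[ N(u)-N(v) = \rho\bigl[9W_0(Au+Av)+3(Au)^2+3(Au)(Av)+3(Av)^2\bigr]\,A(u-v), \]
using $a^2-b^2=(a+b)(a-b)$, $a^3-b^3=(a^2+ab+b^2)(a-b)$, and the linearity of $A$; this isolates the difference $A(u-v)$ as a single factor. Calling the bracket $F_{u,v}$, the task reduces to estimating $\|D^2(\rho F_{u,v}\,A(u-v))\|_{L^2(0,1)}$. Expanding $D^2$ by iterated Leibniz produces a finite sum of products in which exactly two applications of $D$ are distributed among the factors $\rho$, $W_0$, $Au$, $Av$ and $A(u-v)$. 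In each such term I would place \emph{one} factor in $L^2(0,1)$ and the rest in $L^\infty(0,1)$ via H\"older. The pointwise bounds $|Aw(\rho)|\lesssim\rho^{3/2}\|w\|_2$, $|DAw(\rho)|\lesssim\rho^{-1/2}\|w\|_2$, $|D^2Aw(\rho)|\lesssim\rho^{-5/2}\|w\|_2$ from Lemmas \ref{lem:A}, \ref{lem:DA} control the $L^\infty$ factors, while the weighted $L^2$ bounds $\|\rho^{-2}Aw\|_{L^2(0,1)},\|DAw\|_{L^2(0,1)},\|\rho^2 D^2Aw\|_{L^2(0,1)}\lesssim\|w\|_2$ handle the single $L^2$ factor. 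The smooth multiplier $W_0$ and the factors $D^k\rho$ (which introduce a pole $-\rho^{-3}$ for $k=2$) are absorbed by the $\rho^{3/2}$-decay of the two $Aw$-factors. Collecting weights of $\|u\|_2,\|v\|_2,\|u-v\|_2$ yields $\|N(u)-N(v)\|_1\lesssim(\|u\|_2+\|v\|_2+\|u\|_2^2+\|u\|_2\|v\|_2+\|v\|_2^2)\|u-v\|_2$, and restricting to $\mc B_2$ absorbs the quadratic contributions.

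For the mapping property $N(\mc H_2)\subset\mc H_1$ I would run the same estimate without any size restriction to get $\|N(u)\|_1\lesssim\|u\|_2^2+\|u\|_2^3$; for $u\in C^\infty_c(0,1]$ one has $K^2u\equiv 0$ near the origin and hence $N(u)$ vanishes in a neighborhood of $\rho=0$, so $N(u)\in\mc H_1$. Approximating a general $u\in\mc H_2$ by such smooth data (Lemma \ref{lem:basicH}) and invoking the Lipschitz estimate (after rescaling if $\|u\|_2\ge1$) gives $N(u)\in\mc H_1$. Fr\'echet differentiability at the origin is then immediate: setting $v=0$ in the Lipschitz bound yields $\|N(u)\|_1\lesssim\|u\|_2^2=o(\|u\|_2)$ as $\|u\|_2\to 0$, so $D^F N(0)=0$.

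The only genuine obstacle is organizational, namely verifying for each Leibniz-produced term that the $\rho^{3/2}$-decay supplied by Lemma \ref{lem:A} for every $Aw$-factor, together with the explicit prefactor $\rho$ in front of $\tilde N$, dominates the $\rho^{-2}$ cost of each application of $D$. This is precisely the scaling heuristic that motivates the third-derivative norm $\|\cdot\|_1$ in Section \ref{subsec:higherenergy}: the nonlinearity is critical-matched to this Sobolev scale in the $1+5$-dimensional geometry, so the estimates close with a little room to spare and no delicate cancellation is needed.
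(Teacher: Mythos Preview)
Your approach is essentially identical to the paper's: the same algebraic factorization of $\tilde N(x,\rho)-\tilde N(y,\rho)$, the same Leibniz expansion of $D^2$ applied to the resulting product, and the same H\"older strategy of placing the $A(u-v)$ factor in a weighted $L^2$ while controlling the remaining factors in weighted $L^\infty$ via Lemmas~\ref{lem:A} and~\ref{lem:DA}. The paper merely displays two representative terms and leaves the rest to the reader, whereas you spell out the $\rho$-weight bookkeeping more explicitly; your additional density argument for the mapping property $N(\mc H_2)\subset\mc H_1$ is a point the paper leaves implicit. The parenthetical ``after rescaling if $\|u\|_2\ge 1$'' is unnecessary, since you already recorded the unrestricted estimate $\|N(u)-N(v)\|_1\lesssim(\|u\|_2+\|v\|_2+\|u\|_2^2+\|u\|_2\|v\|_2+\|v\|_2^2)\|u-v\|_2$, which gives local Lipschitz continuity on every bounded set and hence the Cauchy property of $N(u_n)$ directly.
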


\begin{proof}
Evidently, we have 
\[ \tilde{N}(x,\rho)-\tilde{N}(y,\rho)=[9W_0(\rho)(x+y)+3(x^2+xy+y^2)](x-y) \]
for all $x,y \in \R$ and $\rho\in [0,1]$ and thus, we obtain
\begin{align*}
D^2 [N(Au)-N(Av)]&=D^2 [9(\cdot)W_0(Au+Av)+3(\cdot)((Au)^2+Au Av+(Av)^2)](Au-Av) \\
&\quad +2D[9(\cdot)W_0(Au+Av)+3(\cdot)((Au)^2+AuAv+(Av)^2)]D(Au-Av) \\
&\quad +[9(\cdot)W_0(Au+Av)+3(\cdot)((Au)^2+AuAv+(Av)^2)]D^2(Au-Av).
\end{align*}
We need to put this whole expression in $L^2$ and therefore, we place the terms involving $Au-Av$ in $L^2$ and
the rest in $L^\infty$ and apply Lemmas \ref{lem:A} and \ref{lem:DA} to bound them.
For instance, we use
\begin{align*}
\|D^2[(\cdot)Au](Au-Av)\|_{L^2(0,1)}&\leq \|(\cdot)^2 D^2 [(\cdot)Au]\|_{L^\infty(0,1)}
\|(\cdot)^{-2}(Au-Av)\|_{L^2(0,1)} \\
&\lesssim \|u\|_2 \|u-v\|_2
\end{align*}
since
\begin{align*} \|(\cdot)^2 D^2 [(\cdot)Au]\|_{L^\infty(0,1)}&\lesssim \|(\cdot)^{-1}Au\|_{L^\infty(0,1)} 
+\|(\cdot)DAu\|_{L^\infty(0,1)} \\
&\quad + \|(\cdot)^3 D^2 Au\|_{L^\infty(0,1)} \\
&\lesssim  \|u\|_2.
\end{align*}
Similarly, we obtain
\begin{align*}
\|D^2[(\cdot)(Au)^2](Au-Av)\|_{L^2(0,1)}&\leq \|(\cdot)^2 D^2[(\cdot)(Au)^2]\|_{L^\infty(0,1)}\|(\cdot)^{-2}
(Au-Av)\|_{L^2(0,1)} \\
&\lesssim \|u\|_2^2 \|u-v\|_2
\end{align*}
since, e.g.
\begin{align*}
\|(\cdot)^3 D^2 (Au)^2\|_{L^\infty(0,1)}&\lesssim \|(\cdot)^3 (DAu)^2\|_{L^\infty(0,1)}+\|(\cdot)^3 AuD^2Au\|_{L^\infty(0,1)} \\
&\lesssim \|u\|_2^2,
\end{align*}
etc. The other terms can be treated in the exact same fashion.
Since $\tilde{N}(0,\rho)=0$ for all $\rho\in [0,1]$, we obtain $N(0)=0$ and thus, 
$\|N(u)\|_1\lesssim \|u\|_2^2$ for all $u\in\mc B_2$.
This estimate also implies the Fr\'echet differentiability of $N$ at $0$ with $D^F N(0)=0$.
\end{proof}

In order to write the main equation \eqref{eq:phi1st} as an ordinary differential equation
on the Hilbert space $\mc H$ we introduce the vector--valued nonlinearity 
\[ \mb N(\mb u):=\left (\begin{array}{c}-N(u_2) \\ 0 \end{array} \right ) \]
for $\mb u=(u_1,u_2)$.

\begin{lemma}
\label{lem:vecN}
The nonlinearity $\mb{N}$ maps $\mc H$ to itself and satisfies the bound
\[ \|\mb{N}(\mb u)-\mb N(\mb v)\|\lesssim (\|\mb u\|+\|\mb v\|)\|\mb u-\mb v\| \]
for all $\mb u, \mb v \in \mc B$ where $\mc B$ denotes the open unit ball in $\mc H$.
Furthermore, $\mb N(\mb 0)=\mb 0$ and $\mb N$ is Fr\'echet differentiable at $\mb 0$ with
$D^F \mb N(\mb 0)=\mb 0$.
\end{lemma}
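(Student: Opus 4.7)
The plan is to reduce this lemma directly to Lemma \ref{lem:N} by exploiting that $\mb N$ depends only on the second component of its argument. Recall from the definition of the inner product on $\mc H$ that $\|\mb u\|^2 = \|u_1\|_1^2 + \|u_2\|_2^2$, so in particular $\|u_2\|_2 \leq \|\mb u\|$ for every $\mb u = (u_1, u_2) \in \mc H$, and by construction $\|\mb N(\mb u)\| = \|N(u_2)\|_1$ since the second component of $\mb N(\mb u)$ vanishes.

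First I would check that $\mb N$ maps $\mc H$ into $\mc H$. Given $\mb u \in \mc H$ we have $u_2 \in \mc H_2$, hence by Lemma \ref{lem:N} the function $N(u_2)$ lies in $\mc H_1$. Therefore $\mb N(\mb u) = (-N(u_2), 0) \in \mc H_1 \times \mc H_2 = \mc H$, where the zero second component trivially lies in $\mc H_2$.

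Next, for the Lipschitz bound: if $\mb u, \mb v \in \mc B$ then $\|u_2\|_2 \leq \|\mb u\| < 1$ and $\|v_2\|_2 \leq \|\mb v\| < 1$, so both $u_2$ and $v_2$ lie in $\mc B_2$. Applying Lemma \ref{lem:N} we obtain
\[
\|\mb N(\mb u) - \mb N(\mb v)\| = \|N(u_2) - N(v_2)\|_1 \lesssim (\|u_2\|_2 + \|v_2\|_2)\|u_2 - v_2\|_2 \leq (\|\mb u\| + \|\mb v\|)\|\mb u - \mb v\|,
\]
which is the desired estimate. The identity $\mb N(\mb 0) = \mb 0$ is immediate from $N(0) = 0$.

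Finally, for Fr\'echet differentiability at $\mb 0$, Lemma \ref{lem:N} tells us that $\|N(u_2)\|_1 = o(\|u_2\|_2)$ as $\|u_2\|_2 \to 0$. Since $\|\mb N(\mb u) - \mb N(\mb 0) - \mb 0\| = \|N(u_2)\|_1$ and $\|u_2\|_2 \leq \|\mb u\|$, it follows that $\|\mb N(\mb u)\| = o(\|\mb u\|)$ as $\|\mb u\| \to 0$, which is exactly the statement that $\mb N$ is Fr\'echet differentiable at $\mb 0$ with $D^F \mb N(\mb 0) = \mb 0$. There is no real obstacle here: all the substantial work is in Lemma \ref{lem:N}, and this lemma merely packages that scalar estimate into the vector framework needed for the fixed-point argument of the next section.
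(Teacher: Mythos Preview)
Your proof is correct and follows exactly the paper's approach: the paper's proof is the one-line observation that $\|\mb N(\mb u)\|=\|N(u_2)\|_1$, so everything reduces to Lemma~\ref{lem:N}. Your write-up simply spells out the details of that reduction.
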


\begin{proof}
Since $\|\mb N(\mb u)\|=\|N(u_2)\|_1$, the statement is an immediate consequence of Lemma \ref{lem:N}.
\end{proof}

Consequently, Eq.~\eqref{eq:phi1st} can be written as
\begin{equation}
\label{eq:odeH}
\tfrac{d}{d\tau}\Phi(\tau)=\mb L\Phi(\tau)+\mb N(\Phi(\tau))
\end{equation}
and our aim is to study Eq.~\eqref{eq:odeH} with small Cauchy data $\mb u\in \mc H$ 
prescribed at $\tau=-\log T$.
Thus, by Duhamel's formula we may rewrite the problem as
\begin{equation}
\label{eq:Duhamel}
\Phi(\tau)=\mb S(\tau+\log T)\mb u+\int_{-\log T}^\tau \mb S(\tau-\tau')\mb N(\Phi(\tau'))d\tau'
\end{equation}
which is equivalent to
\begin{equation}
\label{eq:Duhamel2}
\Psi(\tau)=\mb S(\tau)\mb u+\int_0^\tau \mb S(\tau-\tau')\mb N(\Psi(\tau'))d\tau'
\end{equation}
for $\Psi(\tau)=\Phi(\tau-\log T)$.

\subsection{Existence for codimension one data via the Lyapunov--Perron method}

Our goal is to prove global existence \footnote{Recall that global existence in the variable $\tau$
really means local existence for the original equation in the backward lightcone $\mc C_T$.} 
for Eq.~\eqref{eq:Duhamel2}. 
This is not straightforward since the linear time evolution is unstable due to the presence
of the symmetry mode $\mb g$ given in Eq.~\eqref{eq:g}.
Indeed, we have $\mb S(\tau)\mb g=e^\tau \mb g$.
Consequently, in a first step we modify Eq.~\eqref{eq:Duhamel2} and consider
\begin{equation}
\label{eq:Duhamel3}
\Psi(\tau)=\mb S(\tau)(1-\mb P)\mb u-\int_0^\infty e^{\tau-\tau'}\mb P \mb N(\Psi(\tau'))d\tau'
+\int_0^\tau \mb S(\tau-\tau')\mb N(\Psi(\tau'))\tau'
\end{equation}
instead.
Comparison with Eq.~\eqref{eq:Duhamel2} shows that we have actually modified the \emph{initial data}
by subtracting the term 
\begin{equation}
\label{eq:correction}
\mb P \left [\mb u+\int_0^\infty e^{-\tau'}\mb N(\Psi(\tau'))d\tau' \right ] 
\end{equation}
which is an element of the unstable subspace $\langle \mb g \rangle$.
However, note carefully that the modification 
\emph{depends on the solution itself}.
As we will see, this modification stabilizes the evolution and we are able to obtain global
existence.
The procedure of modifying the data in order to force stability of the evolution is known as the
Lyapunov--Perron method in (finite dimensional) dynamical systems theory.
In a second step we then show how to obtain a solution of Eq.~\eqref{eq:Duhamel2}.

In order to be able to apply a fixed point argument, we define an operator $\mb K$ by
\begin{equation}
\label{eq:defK}
\mb K(\Psi;\mb u)(\tau):=\mb S(\tau)(1-\mb P)\mb u-\int_0^\infty e^{\tau-\tau'}\mb P \mb N(\Psi(\tau'))d\tau'
+\int_0^\tau \mb S(\tau-\tau')\mb N(\Psi(\tau'))\tau'.
\end{equation}
As a consequence, fixed points of $\mb K(\cdot;\mb u)$ 
correspond to solutions of Eq.~\eqref{eq:Duhamel3}.
We run the fixed point argument in a Banach space $\mc X$ defined by
\[ \mc X:=\{\Psi \in C([0,\infty),\mc H): \sup_{\tau>0}e^{|\omega| \tau}\|\Psi(\tau)\|<\infty \} \]
where $\omega$ is from Proposition \ref{prop:linstab}, i.e., after fixing a small $\varepsilon>0$,
the linear evolution satisfies
$\|\mb S(\tau)(1-\mb P)\|\lesssim e^{\omega\tau}$ with $\omega=\max\{-\frac32,s(\mb L_{\mc N})\}+\varepsilon$ 
and $\omega<0$ by the assumed mode stability.
We also write
\[ \|\Psi\|_{\mc X}:=\sup_{\tau>0}e^{|\omega| \tau}\|\Psi(\tau)\| \]
for the norm on $\mc X$.
Furthermore, we denote by $\mc X_\delta \subset \mc X$ the closed subset defined by
\[ \mc X_\delta:=\{\Psi \in \mc X: \|\Psi\|_{\mc X}\leq \delta\}. \]
In other words, $\mc X_\delta$ is the closed $\delta$--ball in $\mc X$.

\begin{lemma}
\label{lem:fixedpoint}
Let $\delta>0$ be sufficiently small and assume $\|\mb u\|\leq \delta^2$.
Then the operator $\mb K$ maps $\mc X_\delta$ to itself and is contractive, i.e., 
\[ \|\mb K(\Psi; \mb u)-\mb K(\Phi; \mb u)\|_{\mc X}\leq \tfrac12 \|\Psi-\Phi\|_{\mc X} \]
for all $\Psi,\Phi \in \mc X_\delta$.
As a consequence, there exists a unique fixed point of $\mb K(\cdot;\mb u)$ in $\mc X_\delta$.
\end{lemma}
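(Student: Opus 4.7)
The approach I would take is the standard Lyapunov--Perron contraction argument, whose heart is the observation that on the one--dimensional unstable subspace $\mc M=\rg\mb P=\langle\mb g\rangle$ (Lemma \ref{lem:algmult}) the semigroup acts explicitly as $\mb S(\tau)\mb P=e^\tau\mb P$. Using this together with the decomposition of the identity as $(1-\mb P)+\mb P$ inside the Duhamel integral in \eqref{eq:defK}, I would first rewrite
\[
\mb K(\Psi;\mb u)(\tau)=\mb S(\tau)(1-\mb P)\mb u+\int_0^\tau \mb S(\tau-\tau')(1-\mb P)\mb N(\Psi(\tau'))d\tau'-\int_\tau^\infty e^{\tau-\tau'}\mb P\mb N(\Psi(\tau'))d\tau'.
\]
In this form each of the three terms is individually controlled: the linear term by Proposition \ref{prop:linstab}, the middle ``stable'' Duhamel integral by the same exponential decay bound, and the remaining backward integral by direct computation using that $\mb P$ is bounded on $\mc H$.

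Next I would insert the nonlinear bound $\|\mb N(\Psi(\tau'))\|\lesssim\|\Psi(\tau')\|^2\lesssim\delta^2 e^{-2|\omega|\tau'}$ provided by Lemma \ref{lem:vecN} (using $\mb N(\mb 0)=\mb 0$) for any $\Psi\in\mc X_\delta$. Proposition \ref{prop:linstab} gives the linear bounds $\|\mb S(\tau)(1-\mb P)\mb u\|\lesssim e^{-|\omega|\tau}\|\mb u\|$ and $\|\mb S(\tau-\tau')(1-\mb P)\|\lesssim e^{-|\omega|(\tau-\tau')}$. The required integrals are then elementary:
\[
\int_0^\tau e^{-|\omega|(\tau-\tau')}e^{-2|\omega|\tau'}d\tau'\lesssim e^{-|\omega|\tau},\qquad \int_\tau^\infty e^{\tau-\tau'}e^{-2|\omega|\tau'}d\tau'=\frac{e^{-2|\omega|\tau}}{1+2|\omega|},
\]
where the second actually decays faster than $e^{-|\omega|\tau}$. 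Adding the three contributions yields an estimate of the form $\|\mb K(\Psi;\mb u)\|_{\mc X}\leq C_1\|\mb u\|+C_2\delta^2$. With $\|\mb u\|\leq\delta^2$ and $\delta$ small enough this is at most $\delta$, giving the self--mapping property $\mb K(\mc X_\delta;\mb u)\subset\mc X_\delta$.

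The contraction estimate is obtained by the same three--term computation, with the full Lipschitz bound from Lemma \ref{lem:vecN},
\[
\|\mb N(\Psi(\tau'))-\mb N(\Phi(\tau'))\|\lesssim(\|\Psi(\tau')\|+\|\Phi(\tau')\|)\|\Psi(\tau')-\Phi(\tau')\|\lesssim \delta e^{-2|\omega|\tau'}\|\Psi-\Phi\|_{\mc X},
\]
replacing the quadratic bound. This yields $\|\mb K(\Psi;\mb u)-\mb K(\Phi;\mb u)\|_{\mc X}\leq C_3\delta\|\Psi-\Phi\|_{\mc X}$, and shrinking $\delta$ once more guarantees $C_3\delta\leq\tfrac12$. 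The Banach fixed point theorem then supplies the unique fixed point in $\mc X_\delta$.

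The only genuine conceptual point, which dictates the whole construction, is that the naive Duhamel iteration would produce a component in $\langle\mb g\rangle$ growing like $e^\tau$, so no fixed point could possibly live in a decaying space. The extra integral over $[0,\infty)$ built into \eqref{eq:defK} is designed precisely so that the unstable projection of the Duhamel integral is converted into a convergent \emph{backward} integral that even decays faster than the stable part; once this rewriting is performed the estimates above are essentially routine.
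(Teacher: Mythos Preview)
Your proposal is correct and follows essentially the same approach as the paper: both decompose $\mb K(\Psi;\mb u)(\tau)$ into its $\mb P$ and $(1-\mb P)$ parts, use $\mb S(\tau)\mb P=e^\tau\mb P$ to rewrite the unstable component as the backward integral $-\int_\tau^\infty e^{\tau-\tau'}\mb P\mb N(\Psi(\tau'))d\tau'$, and then estimate each piece using Proposition~\ref{prop:linstab} and Lemma~\ref{lem:vecN} exactly as you indicate. The only minor omission is the one--line remark that $\mb K(\Psi;\mb u)\in C([0,\infty),\mc H)$ by strong continuity of $\mb S$, which is needed for membership in $\mc X$.
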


\begin{proof}
Note first that $\mb K(\Psi;\mb u) \in C([0,\infty),\mc H)$ for any $\Psi \in \mc X$ and $\mb u \in \mc H$
by the strong continuity of the semigroup $\mb S$.
We decompose the operator $\mb K$ according to
\[ \mb K(\Psi;\mb u)(\tau)=\mb P \mb K(\Psi; \mb u)(\tau)+(1-\mb P)\mb K(\Psi; \mb u)(\tau). \]
By using Lemma \ref{lem:vecN} and Proposition \ref{prop:linstab} we readily estimate
\begin{align*}
\|\mb P \mb K(\Psi; \mb u)(\tau)\|&\leq \int_\tau^\infty e^{\tau-\tau'}\mb \|\mb P \mb N(\Psi(\tau'))\|d\tau'
\lesssim \sup_{\tau'>0}e^{2|\omega|\tau'}\|\Psi(\tau')\|^2\int_\tau^\infty e^{\tau-(1+2|\omega|)\tau'}d\tau' \\
&\lesssim \delta^2 e^{-2|\omega|\tau}
\end{align*}
as well as
\begin{align*}
\|(1-\mb P) \mb K(\Psi; \mb u)\|&\lesssim e^{-|\omega|\tau}\|\mb u\|+\int_0^\tau \|\mb S(\tau-\tau')(1-\mb P)\mb N(\Psi(\tau'))\|d\tau' \\
&\lesssim \delta^2 e^{-|\omega|\tau}+\int_0^\tau e^{-|\omega|(\tau-\tau')}\|\Psi(\tau')\|^2 d\tau' \\
&\lesssim \delta^2 e^{-|\omega|\tau}
\end{align*}
and this yields $\mb K(\Psi;\mb u) \subset \mc X_\delta$ for all $\Psi \in \mc X_\delta$ provided
$\|\mb u\|\leq \delta^2$.
By a completely analogous computation we obtain the estimates
\begin{align*}
\|\mb P \mb K(\Psi; \mb u)(\tau)-\mb P \mb K(\Phi; \mb u)(\tau)\|&\lesssim \delta e^{-|\omega|\tau} \|\Psi-\Phi\|_{\mc X} \\
\|\mb (1-\mb P) \mb K(\Psi; \mb u)(\tau)-\mb (1-\mb P) \mb K(\Phi; \mb u)(\tau)\|&\lesssim \delta e^{-|\omega|\tau} \|\Psi-\Phi\|_{\mc X}
\end{align*}
which imply the claimed contraction property provided $\delta>0$ is sufficiently small.
Consequently, the contraction mapping principle yields the existence of a unique fixed point in $\mc X_\delta$.
\end{proof}

We obtain a global solution of the modified problem Eq.~\eqref{eq:Duhamel3} with small data.

\begin{proposition}
\label{prop:solmod}
Let $\mc U\subset \mc H$ be a sufficiently small open ball with center $\mb 0$ in $\mc H$.
Then, for any given $\mb u \in \mc U$, there exists a unique solution
$\mb \Psi(\mb u) \in \mc X_\delta$ of Eq.~\eqref{eq:Duhamel3}.
Furthermore, the map $\mb \Psi: \mc U\subset \mc H \to \mc X$ is continuous and Fr\'echet differentiable
at $\mb 0$.
\end{proposition}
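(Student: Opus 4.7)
The plan is to piggyback on Lemma \ref{lem:fixedpoint}: existence, uniqueness, continuity and Fr\'echet differentiability at $\mb{0}$ should all fall out of the contraction mapping structure of $\mb{K}$ with essentially no new estimates. First, I would fix $\delta>0$ and choose $\mc{U}$ to be the open ball of radius $\delta^2$ in $\mc{H}$ as in Lemma \ref{lem:fixedpoint}. For every $\mb{u}\in\mc{U}$ that lemma produces a unique fixed point $\mb{\Psi}(\mb{u})\in\mc{X}_\delta$ of $\mb{K}(\cdot;\mb{u})$, and by construction such a fixed point is exactly a solution of Eq.~\eqref{eq:Duhamel3}.

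For continuity, inspection of \eqref{eq:defK} shows that the dependence of $\mb{K}(\Psi;\mb{u})$ on $\mb{u}$ is confined to the \emph{linear} term $\mb{S}(\tau)(1-\mb{P})\mb{u}$. Proposition \ref{prop:linstab} immediately gives $\|\mb{K}(\Psi;\mb{u})-\mb{K}(\Psi;\mb{v})\|_{\mc{X}}\lesssim \|\mb{u}-\mb{v}\|$ uniformly in $\Psi\in\mc{X}_\delta$, and combining this with the $\tfrac12$-contraction in the $\Psi$-slot yields
\[ \|\mb{\Psi}(\mb{u})-\mb{\Psi}(\mb{v})\|_{\mc{X}}\leq \tfrac12\|\mb{\Psi}(\mb{u})-\mb{\Psi}(\mb{v})\|_{\mc{X}}+C\|\mb{u}-\mb{v}\|, \]
so $\mb{\Psi}$ is in fact Lipschitz, hence continuous, from $\mc{U}$ into $\mc{X}$.

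For Fr\'echet differentiability at $\mb{0}$, I note first that $\mb{N}(\mb{0})=\mb{0}$ makes $\Psi\equiv \mb{0}$ a fixed point of $\mb{K}(\cdot;\mb{0})$, so $\mb{\Psi}(\mb{0})=\mb{0}$ by uniqueness. The natural candidate for the derivative is the bounded linear operator $T:\mc{H}\to\mc{X}$, $T\mb{u}(\tau):=\mb{S}(\tau)(1-\mb{P})\mb{u}$, whose boundedness follows from Proposition \ref{prop:linstab}. The main---really the only---step that needs care is the upgrade from the qualitative bound $\|\mb{\Psi}(\mb{u})\|_{\mc{X}}\leq \delta$ to the linear bound $\|\mb{\Psi}(\mb{u})\|_{\mc{X}}\lesssim \|\mb{u}\|$; this I would obtain by feeding $\mb{\Psi}(\mb{u})$ back into the fixed point equation to get $\|\mb{\Psi}(\mb{u})\|_{\mc{X}}\lesssim \|\mb{u}\|+\|\mb{\Psi}(\mb{u})\|_{\mc{X}}^2$ and absorbing the quadratic term using smallness of $\delta$. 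Subtracting $T\mb{u}$ from $\mb{\Psi}(\mb{u})=\mb{K}(\mb{\Psi}(\mb{u});\mb{u})$ leaves only the two nonlinear integrals in $\mb{K}$; plugging the quadratic estimate $\|\mb{N}(\mb{v})\|\lesssim \|\mb{v}\|^2$ from Lemma \ref{lem:vecN} together with $\|\mb{\Psi}(\mb{u})(\tau')\|\lesssim \|\mb{u}\|e^{-|\omega|\tau'}$ into those integrals, and estimating exactly as in the proof of Lemma \ref{lem:fixedpoint}, I get $\|\mb{\Psi}(\mb{u})-T\mb{u}\|_{\mc{X}}\lesssim \|\mb{u}\|^2=o(\|\mb{u}\|)$ as $\mb{u}\to\mb{0}$, proving Fr\'echet differentiability with $D^F\mb{\Psi}(\mb{0})=T$. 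Without the linear-in-$\mb{u}$ upgrade on $\mb{\Psi}(\mb{u})$ one would only get an $O(\delta^2)$ remainder, which is insufficient, so that is the single technical point to watch.
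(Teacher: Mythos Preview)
Your proposal is correct and follows essentially the same approach as the paper: existence from Lemma~\ref{lem:fixedpoint}, Lipschitz continuity from the affine dependence of $\mb{K}$ on $\mb{u}$ combined with the contraction in $\Psi$, and Fr\'echet differentiability at $\mb{0}$ with derivative $[D^F\mb{\Psi}(\mb{0})\mb{u}](\tau)=\mb{S}(\tau)(1-\mb{P})\mb{u}$ via the quadratic bound on $\mb{N}$. One minor simplification: the ``upgrade'' $\|\mb{\Psi}(\mb{u})\|_{\mc{X}}\lesssim\|\mb{u}\|$ that you single out as the technical point to watch is in fact immediate from the Lipschitz estimate you already proved, since $\mb{\Psi}(\mb{0})=\mb{0}$; no separate bootstrap is needed.
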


\begin{proof}
The existence of $\mb \Psi(\mb u)$ is a consequence of Lemma \ref{lem:fixedpoint}.
Now note that
\begin{align*}
\|\mb{\Psi}(\mb u)-\mb \Psi(\mb v)\|_{\mc X}&\leq \|\mb K(\mb \Psi(\mb u);\mb u)-\mb K(\mb \Psi(\mb v);\mb u)\|_{\mc X}
+\|\mb K(\mb \Psi(\mb v);\mb u)-\mb K(\mb \Psi(\mb v);\mb v)\|_{\mc X} \\
&\leq \tfrac12 \|\mb \Psi(\mb u)-\mb \Psi(\mb v)\|_{\mc X}+\|\mb K(\mb \Psi(\mb v);\mb u)-\mb K(\mb \Psi(\mb v);\mb v)\|_{\mc X}
\end{align*}
for all $\mb u, \mb v \in \mc U$ by Lemma \ref{lem:fixedpoint} and thus,
$\|\mb{\Psi}(\mb u)-\mb \Psi(\mb v)\|_{\mc X}\leq 2 \|\mb K(\mb \Psi(\mb v);\mb u)-\mb K(\mb \Psi(\mb v);\mb v)\|_{\mc X}$.
By definition of $\mb K$ and Proposition \ref{prop:linstab} we have
\begin{align*}
\|\mb K(\mb \Psi(\mb v);\mb u)(\tau)-\mb K(\mb \Psi(\mb v);\mb v)(\tau)\|&=\|\mb S(\tau)(1-\mb P)(\mb u-\mb v)\| \\
&\lesssim e^{-|\omega|\tau}\|\mb u-\mb v\|
\end{align*}
which yields $\|\mb K(\mb \Psi(\mb v);\mb u)-\mb K(\mb \Psi(\mb v);\mb v)\|_{\mc X}\lesssim \|\mb u-\mb v\|$ and we conclude
\[ \|\mb{\Psi}(\mb u)-\mb \Psi(\mb v)\|_{\mc X}\lesssim \|\mb u-\mb v\| \]
for all $\mb u, \mb v \in \mc U$.
Hence, $\mb \Psi$ is Lipschitz continuous.
Furthermore, we claim that $[D^F \mb \Psi(\mb 0)\mb u](\tau)=\mb S(\tau)(1-\mb P)\mb u$.
Indeed, since $\mb \Psi(\mb 0)=\mb 0$, we obtain
\begin{align*}
\mb P [\mb \Psi(\mb u)(\tau)-\mb \Psi (\mb 0)(\tau)-\mb S(\tau)(1-\mb P)\mb u]&=-\int_\tau^\infty e^{\tau-\tau'}\mb P \mb N(\mb \Psi(\mb u)(\tau'))d\tau' \\
(1-\mb P) [\mb \Psi(\mb u)(\tau)-\mb \Psi (\mb 0)(\tau)-\mb S(\tau)(1-\mb P)\mb u]&=\int_0^\tau \mb S(\tau-\tau')(1-\mb P)\mb N(\mb \Psi(\mb u)(\tau'))d\tau' 
\end{align*}
and as in the proof of Lemma \ref{lem:fixedpoint} this implies
\[ \|\mb \Psi(\mb u)(\tau)-\mb \Psi (\mb 0)(\tau)-\mb S(\tau)(1-\mb P)\mb u 
\|\lesssim e^{-|\omega|\tau}\|\mb \Psi(\mb u)\|_{\mc X}^2\lesssim e^{-|\omega|\tau}\|\mb u\|^2 \]
by the above and the claim follows.
\end{proof}

\subsection{Existence for general small data}
\label{sec:globalex}

In this section we construct a global solution to Eq.~\eqref{eq:Duhamel2}.
This amounts to removing the modification which led from Eq.~\eqref{eq:Duhamel2} to Eq.~\eqref{eq:Duhamel3}.
Recall that we had to introduce this modification because of the instability of the linear evolution
and this instability emerges from the time translation invariance of the original equation.
Consequently, one should be able to remove the instability by shifting the blowup time. Mathematically, this manifests
itself in the fact that the modification turns out to be identically zero once we have chosen the correct blowup time $T$.

Since the translated equation \eqref{eq:Duhamel2} is independent of $T$, the only place where
the blowup time enters is in the data.
For given data $(f,g)$ as in Eq.~\eqref{eq:mainCauchy}, 
we denote by $\mb v$ the corresponding data in the new coordinates relative
to $\psi^1$, the fundamental self--similar solution with blowup time $T=1$.
Explicitly, we have
\begin{equation}
\label{eq:v}
\mb v(\rho)=\left ( \begin{array}{c}
\rho^3 [g(\rho)-\psi_t^1(0,\rho)] \\ \mc D^2 [f-\psi^1(0,\cdot)](\rho) \end{array} \right ), 
\end{equation}
cf.~Eq.~\eqref{eq:phi1stdata} and recall that $\mc D^2 f(r)=rf''(r)+5 f'(r)+\frac{3}{r}f(r)$.
Then we set
\begin{equation}
\label{eq:defU}
\begin{aligned}
\mb U(\mb v,T)(\rho):&=\left ( \begin{array}{c}
\tfrac{1}{T^2} v_1(T\rho) \\ Tv_2(T\rho) \end{array} \right )
+\left (
\begin{array}{c}
T \rho^3[ \psi^1_t(0,T\rho)-\psi^T_t(0,T\rho)] \\
T \mc D^2 [\psi^1(0,\cdot)-\psi^T(0,\cdot)](T\rho)
\end{array} \right ) \\
&=\left ( \begin{array}{c} T\rho^3[g(T\rho)-\psi_t^T(0,T\rho)] \\
T\mc D^2 [f-\psi^T(0,\cdot)](T\rho) \end{array} \right ).
\end{aligned}
\end{equation}
Thus, with Eq.~\eqref{eq:v}, the initial data for Eq.~\eqref{eq:odeH} can be written as
\[ \Psi(0)=\Phi(-\log T)=\mb U(\mb v,T), \]
see Eq.~\eqref{eq:phi1stdata}.
The point of this notation is, of course, that $\mb v$ is independent of $T$ and thus,
the functional dependence of the data $\Psi(0)$ on $(f,g)$ and $T$ is now explicit.
We also remark that the data $(f,g)$ have to be prescribed on the interval $[0,T]$ but $T$
is not known in advance. 
However, this defect is easily remedied by simply prescribing the data on $[0,\frac32]$ since we may 
always assume that $T \in I:=(\frac12,\frac32)$ by the perturbative character of our construction.
Consequently, we set
\[ \|\mb u\|_{\hat{\mc H}}^2:=\int_0^\frac32 |D^2 u_1(\rho)|^2 d\rho+\int_0^\frac32 |u_2'(\rho)|^2d\rho, \]
and denote the respective Hilbert space by $\hat{\mc H}$.
Note carefully that in view of Eq.~\eqref{eq:v} we have
\[ \|\mb v\|_{\hat{\mc H}}=\|(f,g)-(\psi^1(0,\cdot),\psi_t^1(0,\cdot))\|_{\mc{E}(\frac32)} \]
with the $\mc{E}(\frac32)$--norm from Theorem \ref{thm:main}.

\begin{lemma}
\label{lem:U}
The function $\mb U$ maps $\hat{\mc H}\times I$ to $\mc H$ continuously and $\mb U(\mb 0,1)=\mb 0$. Furthermore, 
$\mb U(\mb 0,\cdot): I \to \mc H$ is Fr\'echet differentiable with partial derivative
\[ D_2^F \mb U(\mb 0,1)\lambda=-240 \lambda \mb g \]
for all $\lambda \in \R$ where $\mb g$ is the symmetry mode given in Eq.~\eqref{eq:g}.
\end{lemma}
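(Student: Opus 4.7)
The plan is to use the additive split made explicit by the second line of Eq.~\eqref{eq:defU},
\[ \mb U(\mb v, T) = (T^{-2} v_1(T\,\cdot),\; T v_2(T\,\cdot)) + \mb U(\mb 0, T), \]
in which the first term is linear in $\mb v$ (a pure scaling) and the second depends only on $T$. The identity $\mb U(\mb 0, 1) = \mb 0$ then follows from $\psi^1 \equiv \psi^T$ at $T = 1$.

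For the mapping and continuity claim, a change of variables $s = T\rho$ shows that the scaling part is bounded $\hat{\mc H} \to \mc H$ uniformly for $T \in I = (\tfrac12, \tfrac32)$, with the boundary conditions at $\rho = 0$ inherited from those of $\hat{\mc H}$. Joint continuity in $(\mb v, T)$ is then standard: for smooth $\mb v$, continuity in $T$ is classical, and density combined with the uniform bound extends the result to all of $\hat{\mc H}$. The background piece $T \mapsto \mb U(\mb 0, T)$ is smooth as a map into $\mc H$ because $W_0(\rho) = \frac{5(1-\rho^2)}{5+3\rho^2}$ is smooth on $[0, \tfrac32]$ (the denominator is bounded below), so every $T$-dependent integrand is jointly smooth in $T$ and $\rho$.

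The substantive calculation is the partial derivative $D_2^F \mb U(\mb 0, 1)$. The self-similar form $\psi^T(0, r) = W_0(r/T) - 1$ yields two $T$-independent identities,
\[ T \rho^3 \psi^T_t(0, T\rho) = \rho^4 W_0'(\rho), \qquad T \mc D^2 \psi^T(0, \cdot)(T\rho) = \mc D^2 \psi^1(0, \cdot)(\rho), \]
both by direct computation from the definitions. These reduce the background piece to
\[ \mb U(\mb 0, T)(\rho) = (\, T^2 \rho^4 W_0'(T\rho) - \rho^4 W_0'(\rho),\; T\, \mc D^2 \psi^1(0, \cdot)(T\rho) - \mc D^2 \psi^1(0, \cdot)(\rho)\,), \]
so that only one term per component carries $T$-dependence. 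Differentiating at $T = 1$ (passage of $\partial_T$ under the $\mc H$-norm being justified by smoothness of $W_0$) produces, writing $g(\rho) := \rho W_0'(\rho)$ and $h := \mc D^2 \psi^1(0,\cdot)$, the pair $(\rho^3[\rho g'(\rho) + g(\rho)],\; \partial_\rho[\rho h(\rho)])$. Using the identity $\mc D^2 f = (\tfrac{1}{\rho}\partial_\rho)^2[\rho^3 f]$, a brief expansion of $h$ and $\rho h$ rewrites the second entry as $\tfrac{1}{\rho}\partial_\rho[\tfrac{1}{\rho}\partial_\rho(\rho^3 g)]$, and comparison with Eq.~\eqref{eq:g} then identifies the pair as $-240\, \mb g$. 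Finally, since $I \subset \R$ is one-dimensional, Fréchet differentiability coincides with norm-convergence of the difference quotient, which a second-order Taylor estimate on the smooth $T$-dependence delivers; I expect this Taylor estimate to be the only mildly technical step, and the key algebraic identification of the derivative with $-240\,\mb g$ is where the self-similar structure of $W_0$ is really being used.
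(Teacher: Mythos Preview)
Your proposal is correct and follows essentially the same approach as the paper: you establish the uniform-in-$T$ Lipschitz bound in $\mb v$ via the scaling, reduce joint continuity to continuity in $T$ using density and smoothness of $W_0$, and then obtain the derivative by direct differentiation. Your treatment is in fact more explicit than the paper's, which simply states that ``by straightforward differentiation one obtains $D^F_2 \mb U(\mb 0,1)\lambda=-240\lambda\mb g$''; your two $T$-independent identities and the verification that $\partial_\rho[\rho h(\rho)]=\mc D^2 g(\rho)$ spell out precisely that computation.
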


\begin{proof}
It follows immediately from the definition of $\mb U$ that
\[ \|\mb U(\mb v,T)-\mb U(\mb w,T)\|\lesssim \|\mb v-\mb w\|_{\hat{\mc H}} \]
for all $\mb v, \mb w \in \hat{\mc H}$, uniformly in $T \in I$.
Thus, in order to show continuity of $\mb U$, it suffices to prove continuity of $\mb U(\mb v,\cdot): I\to \mc H$ for
fixed $\mb v \in \hat{\mc H}$.
By Lemma \ref{lem:basicH} we may assume $v_j \in C[0,\frac32]$, $j=1,2$.
Furthermore, recall that $\psi^T(0,r)=W_0(\frac{r}{T})-1$ and $\psi_t^T(0,r)=\frac{r}{T^2}W_0'(\frac{r}{T})$ with
\[ W_0(\rho)=\frac{1-\rho^2}{1+\frac35 \rho^2}. \]
This implies $\mc D^2 \psi^T(0,\cdot) \in C^\infty[0,\frac32]$ and continuity of $\mb U(\mb v,\cdot)$ follows
by the continuity of $T \mapsto \|f(T\cdot)\|_{L^2(0,1)}: I\to\R$ for $f \in C[0,\frac32]$.
Obviously, we have $\mb U(\mb 0,1)=\mb 0$ and the Fr\'echet differentiability of $\mb U(\mb 0,\cdot)$
on $I$ is also evident.
By straightforward differentiation we obtain
$D^F_2 \mb U(\mb 0,1)\lambda=-240 \lambda \mb g$
as claimed.
\end{proof}

By Lemma \ref{lem:U} we infer that $\mb U(\mb v,T) \in \mc H$ is small provided $\mb v$ is sufficiently
small in $\hat{\mc H}$ and $T$ is sufficiently close to $1$.
Consequently, we obtain $\mb U(\mb v,T) \in \mc U$ where $\mc U$ is from 
Proposition \ref{prop:solmod} and there exists a solution $\mb \Psi(\mb U(\mb v,T)) \in \mc X$ of Eq.~\eqref{eq:Duhamel3}
with initial data $\mb U(\mb v,T)$.
The correction term \eqref{eq:correction}, which was introduced to suppress the instability of the linear evolution, is given by
\[ \mb P\left [\mb U(\mb v,T)+\int_0^\infty e^{-\tau'}\mb N(\mb \Psi(\mb U(\mb v,T))(\tau'))d\tau' \right ]=:\mb F(\mb v,T) \]
and by the above considerations, $\mb F: \mc V\times J \to \langle \mb g \rangle$ is a well-defined map
for $\mc V$ a sufficiently small open ball around $\mb 0$ in $\hat{\mc H}$ and $J \subset I$ a
sufficiently small open interval with $1 \in J$.
If $\mb F(\mb v,T)=\mb 0$ then the correction term vanishes and $\mb \Psi(\mb U(\mb v,T))$ is also a solution
to the original equation Eq.~\eqref{eq:Duhamel2}.
Obviously, we have $\mb F(\mb 0,1)=\mb 0$ since $\mb U(\mb 0,1)=\mb 0$ and the corresponding solution
is $\mb \Psi(\mb 0)=\mb 0$ by the uniqueness in $\mc X_\delta$ (Proposition \ref{prop:solmod}).
Now we show that for any small $\mb v$ we can find a $T$ such that $\mb F(\mb v,T)=\mb 0$.
We need one additional technical result.

\begin{lemma}
\label{lem:F}
The mapping $\mb F: \mc V\times J\to \langle \mb g \rangle$ is continuous. Furthermore, $\mb F(\mb 0,\cdot): J\to \langle \mb g \rangle$
is Fr\'echet differentiable at $1$ with derivative
\[ D^F_2 \mb F(\mb 0,1)\lambda=-240  \lambda\mb g \]
for all $\lambda \in \R$.
\end{lemma}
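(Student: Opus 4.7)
The plan is to treat $\mb F$ as a composition of already-understood maps and to extract the derivative at $1$ entirely from the linear piece $\mb P\mb U(\mb 0,T)$, showing that the nonlinear integral term is quadratically small.

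Concretely, write
\[
\mb F(\mb v,T)=\mb P\,\mb U(\mb v,T)+\mb P\int_0^\infty e^{-\tau'}\mb N\bigl(\mb\Psi(\mb U(\mb v,T))(\tau')\bigr)\,d\tau'.
\]
For continuity, the first summand is continuous by Lemma \ref{lem:U} and boundedness of $\mb P$. For the integral summand, note that $\mb\Psi(\mb U(\mb v,T))\in\mc X_\delta$, so $\|\mb\Psi(\mb U(\mb v,T))(\tau')\|\lesssim\delta\,e^{-|\omega|\tau'}$, and the Lipschitz estimate of Lemma \ref{lem:vecN} gives
\[
\|\mb N(\mb\Psi(\mb U(\mb v,T))(\tau'))-\mb N(\mb\Psi(\mb U(\mb w,S))(\tau'))\|\lesssim\delta\,e^{-2|\omega|\tau'}\|\mb\Psi(\mb U(\mb v,T))-\mb\Psi(\mb U(\mb w,S))\|_{\mc X}.
\]
Combined with the Lipschitz continuity of $\mb\Psi$ from Proposition \ref{prop:solmod} and the continuity of $\mb U$ from Lemma \ref{lem:U}, the integrand is dominated by an integrable function (namely a constant multiple of $e^{-(1+2|\omega|)\tau'}$), so dominated convergence yields continuity in $(\mb v,T)$.

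For the Fr\'echet derivative at $1$ along the $T$-direction, observe that $\mb U(\mb 0,1)=\mb 0$ and by uniqueness in $\mc X_\delta$ (Proposition \ref{prop:solmod}) $\mb\Psi(\mb 0)=\mb 0$. Lipschitz continuity of $\mb\Psi$ and Lemma \ref{lem:U} then give
\[
\|\mb\Psi(\mb U(\mb 0,T))\|_{\mc X}\lesssim\|\mb U(\mb 0,T)\|\lesssim|T-1|,
\]
so by the quadratic bound $\|\mb N(\mb u)\|\lesssim\|\mb u\|^2$ from Lemma \ref{lem:vecN} (using $\mb N(\mb 0)=\mb 0$),
\[
\Bigl\|\int_0^\infty e^{-\tau'}\mb N(\mb\Psi(\mb U(\mb 0,T))(\tau'))\,d\tau'\Bigr\|\lesssim|T-1|^2\int_0^\infty e^{-(1+2|\omega|)\tau'}\,d\tau'=O(|T-1|^2).
\]
Hence the nonlinear integral is $o(|T-1|)$, and the derivative of $\mb F(\mb 0,\cdot)$ at $T=1$ coincides with $\mb P\circ D^F_2\mb U(\mb 0,1)$. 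Using Lemma \ref{lem:U} and the fact that $\mb g\in\mc M=\rg\mb P$ (Lemma \ref{lem:algmult}), so $\mb P\mb g=\mb g$, we get
\[
D^F_2\mb F(\mb 0,1)\lambda=\mb P(-240\lambda\mb g)=-240\lambda\mb g.
\]

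The main technical nuisance is verifying the dominated convergence step uniformly in $(\mb v,T)\in\mc V\times J$; everything reduces to the fact that the $\mc X$-norm absorbs the exponential weight $e^{|\omega|\tau}$ and the quadratic nature of $\mb N$ then produces an $e^{-2|\omega|\tau'}$ decay that easily beats the $e^{-\tau'}$ factor in front of the Duhamel-type integral, so no delicate estimates are needed beyond what Lemma \ref{lem:vecN}, Lemma \ref{lem:U}, and Proposition \ref{prop:solmod} already provide.
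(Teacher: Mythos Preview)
Your proof is correct and follows essentially the same approach as the paper's own argument. The paper packages the integral term abstractly by introducing the bounded linear operator $\mb B\Psi:=\int_0^\infty e^{-\tau'}\Psi(\tau')\,d\tau'$ and the map $\hat{\mb N}(\Psi)(\tau):=\mb N(\Psi(\tau))$, then applies the chain rule for Fr\'echet derivatives to the composition $\mb F=\mb P\circ(\mb U+\mb B\circ\hat{\mb N}\circ\mb\Psi\circ\mb U)$, while you carry out the same computation by direct estimates; in both cases the point is that $D^F\hat{\mb N}(\mb 0)=\mb 0$ (equivalently, your $O(|T-1|^2)$ bound), so the derivative reduces to $\mb P\,D^F_2\mb U(\mb 0,1)$.
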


\begin{proof}
It is convenient to introduce a symbol for the integral operator in the definition of $\mb F$ and we write
\[ \mb B \Psi:=\int_0^\infty e^{-\tau'}\Psi(\tau')d\tau'. \]
Then $\mb B: \mc X \to \mc H$ is linear and bounded.
Furthermore, we define $\hat{\mb N}: \mc X\to \mc X$ by $\hat{\mb N}(\Psi)(\tau):=\mb N(\Psi(\tau))$.
By Lemma \ref{lem:vecN} we have
\[ \|\hat{ \mb N}(\Psi)\|_{\mc X}=\sup_{\tau>0}e^{|\omega|\tau}\|\mb{N}(\Psi(\tau))\|
\lesssim \sup_{\tau>0}e^{|\omega|\tau}\|\Psi(\tau)\|^2 \leq \|\Psi\|_{\mc X}^2 \]
which shows that $\hat{\mb N}$ is Fr\'echet differentiable at $\mb 0$ with 
$\hat {\mb N}(\mb 0)=\mb 0$ and $D^F \hat{\mb N}(\mb 0)=\mb 0$.
Consequently, the function $\mb F$ can be written as
\[ \mb F(\mb v,T)=\mb P \left [\mb U(\mb v,T)+\mb B \hat{\mb N} (\mb \Psi (\mb U(\mb v,T))) \right ] \]
and by Lemmas \ref{lem:vecN}, \ref{lem:U} and Proposition \ref{prop:solmod} it follows that
$\mb F$ is continuous.
Furthermore, by the chain rule for Fr\'echet derivatives we immediately infer
\begin{align*} 
D^F_2 \mb F(\mb 0,1)&=\mb P D^F_2 \mb U(\mb 0,1)+\mb B D^F \hat {\mb N}(\mb 0)D^F \mb \Psi(\mb 0)D^F_2 \mb U(\mb 0,1) \\
&=\mb P D^F_2 \mb U(\mb 0,1)
\end{align*}
and Lemma \ref{lem:U} yields $D^F_2 \mb F(\mb 0,1)\lambda=-240 \lambda \mb g$ for all $\lambda \in \R$.
\end{proof}

\begin{lemma}
\label{lem:F0}
Let $\mc V \subset \hat{\mc H}$ be a sufficiently small open ball around $\mb 0$.
Then, for any $\mb v\in \mc V$, there exists a $T \in (\frac12,\frac32)$ such that $\mb F(\mb v,T)=\mb 0$.
\end{lemma}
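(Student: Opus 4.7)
My plan is to reduce the problem to a scalar intermediate value theorem on the one--dimensional subspace $\langle \mb g \rangle$. Since $\mc M=\langle \mb g \rangle$ is one--dimensional by Lemma \ref{lem:algmult} and $\mb F$ takes values in $\mc M$, I can write
\[ \mb F(\mb v,T)=F(\mb v,T)\,\mb g \]
for a unique scalar function $F:\mc V\times J\to\R$. The statement $\mb F(\mb v,T)=\mb 0$ is then equivalent to $F(\mb v,T)=0$, and continuity of $\mb F$ (Lemma \ref{lem:F}) transfers to continuity of $F$.

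Next, I use the partial Fr\'echet derivative at the base point. By Lemma \ref{lem:F} we have $\mb F(\mb 0,1)=\mb 0$ and $D^F_2\mb F(\mb 0,1)\lambda=-240\lambda\,\mb g$ for $\lambda\in\R$, so in scalar terms $F(\mb 0,1)=0$ and $\partial_T F(\mb 0,1)=-240$. Since this is just the definition of Fr\'echet differentiability at the single point $T=1$, it yields the expansion
\[ F(\mb 0,T)=-240\,(T-1)+o(|T-1|)\qquad\text{as }T\to 1. \]
In particular I can fix $\eta>0$ so small that $T_\pm:=1\pm\eta$ lie in $J\subset(\tfrac12,\tfrac32)$ and $F(\mb 0,T_-)>0>F(\mb 0,T_+)$ (with a definite gap, say both having absolute value at least $120\eta$).

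Then I invoke joint continuity of $\mb F$ at $(\mb 0,T_\pm)$: shrinking $\mc V$ further if necessary, I ensure that for every $\mb v\in\mc V$ one still has $F(\mb v,T_-)>0$ and $F(\mb v,T_+)<0$. Finally, for each fixed $\mb v\in\mc V$, the map $T\mapsto F(\mb v,T)$ is continuous on the compact interval $[T_-,T_+]$ (continuity in $T$ at fixed $\mb v$ follows from the joint continuity of $\mb F$), so the intermediate value theorem produces a $T\in(T_-,T_+)\subset(\tfrac12,\tfrac32)$ with $F(\mb v,T)=0$, hence $\mb F(\mb v,T)=\mb 0$, as required.

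There is essentially no hard step: the only subtlety is that Lemma \ref{lem:F} only gives Fr\'echet differentiability of $\mb F(\mb 0,\cdot)$ \emph{at} $T=1$ (not on a neighborhood), which precludes a direct implicit function theorem. The IVT argument above bypasses this by using only the one--point derivative (to establish a sign change at $T=1$ in the base variable) together with the joint continuity already provided by Lemma \ref{lem:F}. Note that uniqueness of $T$ is not asserted and is not needed for the conclusion.
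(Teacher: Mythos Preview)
Your proof is correct and follows essentially the same approach as the paper: reduce $\mb F$ to a scalar function via the one--dimensional range $\langle\mb g\rangle$, use the nonzero derivative $\partial_T F(\mb 0,1)=-240$ to obtain a sign change of $F(\mb 0,\cdot)$ at two points $T_\pm$ near $1$, invoke joint continuity to propagate the sign pattern to all small $\mb v$, and conclude by the intermediate value theorem. Your remark that only differentiability at the single point $T=1$ is available (so the implicit function theorem is not directly applicable) is exactly the reason both arguments resort to the IVT.
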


\begin{proof}
Denote by $i: \langle \mb g \rangle \to \R$ the vector space isomorphism given by $i(\lambda \mb g)=\lambda$, $\lambda \in \R$,
and set $f:=i \circ \mb F$.
We have $f(\mb 0,1)=0$ and Lemma \ref{lem:F} shows that $f(\mb 0,\cdot): J \to \R$ is differentiable
at $1$ with $\partial_2 f(\mb 0,1)\not= 0$.
Consequently, we obtain $T_-, T_+ \in J$ such that $f(\mb 0,T_-)<0$ and $f(\mb 0,T_+)>0$.
Since $f: \mc V \times J \to \R$ is continuous, we find that $f(\mb v,T_-)<0$ and $f(\mb v,T_+)>0$
for all $\mb v \in \mc V$ provided $\mc V$ is sufficiently small.
Thus, by the intermediate value theorem there exists a $T \in (T_-,T_+)$ such that
$f(\mb v,T)=0$.
\end{proof}

We formulate the main result as a theorem.

\begin{theorem}
Let $\mc V \subset \hat{\mc H}$ be a sufficiently small ball with center $\mb 0$.
Then, for any $\mb v \in \mc V$, there exists a $T \in (\frac12,\frac32)$ such that
the Cauchy problem
\[ \left \{ \begin{array}{l}
\tfrac{d}{d\tau}\Phi(\tau)=\mb L \Phi(\tau)+\mb N(\Phi(\tau)) \\
\Phi(-\log T)=\mb U(\mb v,T)
\end{array} \right .
\]
has a unique mild solution $\Phi \in C([-\log T,\infty),\mc H)$ satisfying
\[ \|\Phi(\tau)\|\lesssim e^{-|\omega|\tau} \]
for all $\tau>-\log T$.
\end{theorem}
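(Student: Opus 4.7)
The proof is essentially an assembly of the pieces already constructed in the preceding sections, with the blowup time $T$ selected so as to kill the instability introduced by the symmetry mode. The plan is as follows.

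First, given $\mb v \in \mc V$, invoke Lemma \ref{lem:F0} to produce $T \in (\tfrac12,\tfrac32)$ with $\mb F(\mb v,T)=\mb 0$. Provided $\mc V$ is chosen small enough and $T$ stays close to $1$, Lemma \ref{lem:U} guarantees that $\mb U(\mb v,T) \in \mc U$, the small neighborhood of the origin from Proposition \ref{prop:solmod}. Let $\Psi := \mb\Psi(\mb U(\mb v,T)) \in \mc X_\delta$ be the corresponding solution of the modified Duhamel equation \eqref{eq:Duhamel3}.

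Next, I use the vanishing of $\mb F(\mb v,T)$ to pass from \eqref{eq:Duhamel3} to the unmodified equation \eqref{eq:Duhamel2}. By definition of $\mb F$, the second term on the right--hand side of \eqref{eq:Duhamel3} equals $e^\tau \mb F(\mb v,T)=\mb 0$, so $\Psi$ satisfies
\[ \Psi(\tau)=\mb S(\tau)\mb U(\mb v,T)+\int_0^\tau \mb S(\tau-\tau')\mb N(\Psi(\tau'))\,d\tau', \]
where I used $\mb S(\tau)(1-\mb P)\mb U(\mb v,T)=\mb S(\tau)\mb U(\mb v,T)-e^\tau \mb P \mb U(\mb v,T)$ together with $\mb P\mb U(\mb v,T)=\mb F(\mb v,T)-\mb P \mb B\hat{\mb N}(\Psi)=-\mb P\mb B\hat{\mb N}(\Psi)$, which contributes exactly the missing piece to turn the truncated integral into one over $[0,\tau]$ plus an $e^\tau$ term that cancels. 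Defining $\Phi(\tau) := \Psi(\tau+\log T)$ then yields a mild solution of the original Cauchy problem: $\Phi(-\log T)=\Psi(0)=\mb U(\mb v,T)$, and a change of variable shows $\Phi$ solves the Duhamel formula \eqref{eq:Duhamel} which encodes the ODE $\tfrac{d}{d\tau}\Phi = \mb L\Phi+\mb N(\Phi)$.

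The decay bound is immediate: since $\Psi \in \mc X_\delta$ we have $\|\Psi(\sigma)\| \leq \delta e^{-|\omega|\sigma}$ for all $\sigma \geq 0$, hence
\[ \|\Phi(\tau)\| = \|\Psi(\tau+\log T)\| \leq \delta\, T^{-|\omega|} e^{-|\omega|\tau} \lesssim e^{-|\omega|\tau}, \qquad \tau\geq -\log T. \]
For uniqueness, consider two mild solutions $\Phi_1,\Phi_2$ on $[-\log T,\infty)$ obeying $\|\Phi_i(\tau)\|\lesssim e^{-|\omega|\tau}$. Translating by $\log T$ gives $\Psi_i \in \mc X_\delta$ (shrinking $\mc V$ if necessary so that the implicit constants fit within $\delta$), both satisfying \eqref{eq:Duhamel2}. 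Since $\mb F(\mb v,T)=\mb 0$, each $\Psi_i$ also satisfies the modified equation \eqref{eq:Duhamel3}, i.e.\ both are fixed points of $\mb K(\cdot;\mb U(\mb v,T))$ in $\mc X_\delta$. The contraction property in Lemma \ref{lem:fixedpoint} then forces $\Psi_1=\Psi_2$.

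The genuinely nontrivial ingredients have all been carried out previously: the Lyapunov--Perron fixed point (Lemma \ref{lem:fixedpoint} and Proposition \ref{prop:solmod}) and the intermediate--value argument that selects $T$ (Lemma \ref{lem:F0}). The only subtle point at this stage is verifying that killing the correction term $\mb F(\mb v,T)$ really does identify a solution of \eqref{eq:Duhamel3} with one of \eqref{eq:Duhamel2}; this is the step I would write most carefully, since it hinges on the fact that the unstable Riesz projection is rank one and the symmetry mode grows exactly like $e^\tau$, which is precisely the factor appearing in the modification.
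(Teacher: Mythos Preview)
Your proposal is correct and follows exactly the route of the paper, whose own proof is a single sentence noting that only uniqueness in $C([-\log T,\infty),\mc H)$ remains. One small correction to your uniqueness step: the reason an arbitrary decaying solution $\Psi_i$ of \eqref{eq:Duhamel2} also satisfies \eqref{eq:Duhamel3} is \emph{not} that $\mb F(\mb v,T)=\mb 0$ (that identity involves the specific fixed--point solution, not $\Psi_i$); rather, apply $\mb P$ to \eqref{eq:Duhamel2}, multiply by $e^{-\tau}$, and let $\tau\to\infty$ --- the decay of $\Psi_i$ then forces $\mb P\big[\mb U(\mb v,T)+\int_0^\infty e^{-\tau'}\mb N(\Psi_i(\tau'))\,d\tau'\big]=\mb 0$. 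Note also that the paper asserts uniqueness in the full space $C([-\log T,\infty),\mc H)$, not just among solutions already in $\mc X_\delta$; this follows from the local Lipschitz bound on $\mb N$ via a standard local contraction (or Gronwall) argument and does not require shrinking $\mc V$ to fit an a posteriori constant.
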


\begin{proof}
It only remains to prove that the solution is unique in $C([-\log T,\infty),\mc H)$. However, this
is a simple consequence of the fact that $\Phi$ is a fixed point of a contraction mapping.
\end{proof}

\bibliography{ym}{}
\bibliographystyle{plain}

\end{document}